\begin{document}

\title[The curvature of the manifold of  H\"older equilibrium probabilities]{The sectional curvature of the  infinite dimensional manifold of  H\"older equilibrium probabilities}

% \title{Short Version}{The infinite dimensional manifold of  H\"older equilibrium probabilities}

%\newcommand{The infinite dimensional manifold of  H\"older equilibrium}

%    Remove any unused author tags.

%    author one information
\author{Artur O. Lopes}
\address{Inst. de Matematica e Estatistica - UFRGS - Porto Alegre - Brazil}
\curraddr{}
\email{arturoscar.lopes@gmail.com}
\thanks{}

%    author two information
\author{Rafael O. Ruggiero}
\address{Dept. de Matematica - PUC - Rio de Janeiro - Brazil}
\curraddr{}
\email{rafael.o.ruggiero@gmail.com }
\thanks{Partially supported by CNPq}

\subjclass[2010]{37D35; 37A60}

\keywords{}

\date{25-8-2023}

\dedicatory{}

\begin{abstract}  Here we consider the  discrete time  dynamics described by a transformation $T:M \to M$, where $T$ is either  the action of shift $T=\sigma$  on the symbolic space $M=\{1,2,...,d\}^\mathbb{N}$, or, $T$ describes  the action of a $d$ to $1$  expanding transformation $T:S^1 \to S^1$ of class $C^{1+\alpha}$ (\,for example   $x \to T(x) =d\, x $ (mod $1) $\,), where $M=S^1$ is the unit circle.
It is known that the infinite-dimensional manifold $\mathcal{N}$ of  H\"older equilibrium probabilities is an analytical manifold  and carries a natural Riemannian metric. Given a certain normalized H\"older potential $A$ denote by $\mu_A \in \mathcal{N}$ the associated equilibrium probability. The set of tangent vectors $X$ (functions $X: M \to \mathbb{R}$) to the manifold $\mathcal{N}$ at the point
$\mu_A$ (a subspace of the Hilbert space $L^2(\mu_A)$) coincides with  the kernel of the Ruelle operator for the normalized potential $A$.
The Riemannian norm $|X|=|X|_A$ of the vector $X$, which is tangent to $\mathcal{N}$ at the point $\mu_A$,  is described via  the asymptotic variance, that is, satisfies
\smallskip

$\,\,\,\,\,\,\,\,\,\,\,\,\,|X|^2\,\,= \,\,\langle X, X \rangle \,\,=\,\,\lim_{n \to \infty}
\frac{1}{n} \int (\sum_{i=0}^{n-1} X\circ T^i )^2 \,d \mu_A$.
\smallskip

Consider an orthonormal basis $X_i$,
$i \in \mathbb{N}$, for the tangent space at $\mu_A$.
For any two orthonormal vectors  $X$ and $Y$ on the basis the curvature $K(X,Y)$ is
$$K(X,Y)  =  \frac{1}{4}[\, \sum_{i=1}^\infty (   \int X \,Y\, X_i \,d \mu_A)^2 -   \sum_{i=1}^\infty   \int X^2 X_i \,d \mu_A\,  \,\int Y^2 X_i \,d \mu_A \,].$$

When the equilibrium probabilities $\mu_A$ is the set of invariant Markov probabilities on
$\{0,1\}^\mathbb{N}\subset \mathcal{N}$, introducing an orthonormal basis $\hat{a}_y$, indexed by finite words $y$, we show explicit expressions for  $K(\hat{a}_x,\hat{a}_z)$, which is a finite sum. These values can be positive or negative depending on $A$ and the words $x$ and $z$. Words $x,z$ with large length can eventually produce large negative  curvature $K(\hat{a}_x,\hat{a}_z)$. If $x, z$ do not begin with the same letter, then $K(\hat{a}_x,\hat{a}_z)=0$.

\end{abstract}

\maketitle

\newtheorem{theorem}{Theorem}[section]
\newtheorem{lemma}[theorem]{Lemma}
\newtheorem{proposition}[theorem]{Proposition}
\newtheorem{corollary}[theorem]{Corollary}
\newtheorem{question}{Question}

\theoremstyle{definition}
\newtheorem{definition}[theorem]{Definition}
\newtheorem{remark}[theorem]{Remark}
\newtheorem{example}[theorem]{Example}

\newcommand{\fspace}[1]{\mathcal{#1}}
\newcommand{\qspace}[1]{\widehat{\mathcal{#1}}}
\newcommand{\spacem}[1]{\mathcal{#1}}
\newcommand{\op}[1]{\mathscr{#1}}
\newcommand{\dd}{\mathrm{d}}
\newcommand{\supp}{\operatorname{supp}}
\newcommand{\Var}{\operatorname{Var}}
\newcommand{\entropy}{\operatorname{h_{\fspace{X}}}}
\newcommand{\pressure}{\operatorname{Pr}}
\newcommand{\lspan}{\operatorname{span}}
\newcommand{\Id}{\operatorname{Id}}
\newcommand{\interior}{\operatorname{int}}
\newcommand{\rv}{\operatorname{rv}}
\newcommand{\Rot}{\operatorname{Rot}}
\newcommand{\Holder}{\operatorname{Hol}}
\newcommand{\eqdef}{\mathbin{\overset{\footnotesize{\mathrm{def}}}{=}}}

\newcommand{\cL}{\op{L}}
\newcommand{\bR}{\mathbb{R}}
\newcommand{\bN}{{\mathbb N}}

\newcommand{\fr}{\partial}

\section{Introduction}

We denote  by  $T:M \to M$  a transformation  acting  on the metric space $M$, which  is either  the shift  $\sigma$ acting on       $M=\{1,2,...,d\}^\mathbb{N}$, or, $T$ is the action of a $d$ to $1$   expanding transformation $T:S^1 \to S^1$, of class $C^{1+\alpha}$,   where $M=S^1$ is the unit circle.

For a fixed $\alpha>0$ we denote by $\text{Hol}$ the set of $\alpha$-H\"older functions on $M$.

For a H\"older potential $A: M \to \mathbb{R}$ we define the Ruelle operator (sometimes called transfer operator) - which acts on H\"older functions $f: M \to \mathbb{R}$ -   by
\begin{equation} \label{K37}f \to  \op{L}_A f(x) = \sum_{T(y) = x} e^{A(y)} f(y)\end{equation}
It is known (see for instance \cite{PP} or \cite{Bala}) that $\op{L}_A$ has a positive, simple
leading eigenvalue $\lambda_A$ with a positive H\"older eigenfunction $h_A$. Moreover, the
dual operator acting on measures $\op{L}_A^\ast$ has a unique
eigenprobability $\nu_A$ which is associated to the same eigenvalue $\lambda_A$.

Given a  H\"older potential $A$ we say that the probability $\mu_A$ - defined on the Borel sigma-algebra of $M$ - is the equilibrium probability for $A$, if $\mu_A$ maximizes the values
$$ h(\mu) + \int A\,\ d \mu,$$
among  Borel  $T$-invariant probabilities $\mu$ and where $h(\mu)$ is the Kolmogorov-Sinai  entropy of $\mu$.

The theory of thermodynamics formalism shows that the probability $\mu_A$ is unique and is given by the expression $\mu_A = h_A\, \nu_A$.

In some particular cases, the  equilibrium probability (also called Gibbs probability) $\mu_A$ is the one observed on the thermodynamical equilibrium in the Statistical Mechanics of the one-dimensional lattice $\mathbb{N}$ (under an interaction described by the potential $A$). As an example  (where the spin in each site of the lattice $\mathbb{N}$  could be $+$ or $-$) one can take
$M=\{+,-\}^\mathbb{N}$, $A: M \to \mathbb{R}$  and $T$ is the shift.

Taking into account the above definitions, we  say that a   H\"older potential $A$ is normalized if   $\op{L}_A \,1\,=1.$
In this case $\lambda_A=1$ and $\mu_A=\nu_A$.

Two  potentials $A, B$ in $\text{Hol}$ will be called cohomologous to each other (up to a constant),  if there exists a  continuous function $g:M \to \mathbb{R}$ and a constant $c$, such that,
\begin{equation} \label{K38}A = B + g - g \circ T - c.\end{equation}

Note  that the equilibrium probability for $A$, respectively  $B$, is the same if $A$ and $B$ are coboundaries to each other. In each coboundary class (an equivalence relation) there exists a unique normalized potential $A$ (see \cite{PP}).
Therefore, the set of equilibrium probabilities for H\"older potentials $\mathcal{N}$  can be indexed by  H\"older potentials $A$ which are normalized. We will use this point of view here:\, $A\,\leftrightarrow\,    \mu_A$.

 The infinite-dimensional manifold $\mathcal{N}$ of  H\"older equilibrium probabilities $\mu_A$  is an analytic manifold  (see \cite{Ru}, \cite{SiSS}, \cite{PP}, \cite{Chae})  and it was shown in \cite{GKLM}  that it carries a natural Riemannian structure. In order to provide a context for our main result, let us review first some of the main properties of this infinite-dimensional manifold and some definitions described on \cite{GKLM}.

 The set of tangent vectors $X$ (a function $X: M \to \mathbb{R}$) to $\mathcal{N}$ at the point
$\mu_A$ coincides with  the kernel of $\op{L}_A $.
The Riemannian norm $|X|=|X|_{\mu_A}$ of the vector $X$, which is tangent to $\mathcal{N}$ at the point $\mu_A$,  is described (see Theorem D in \cite{GKLM}) via  the asymptotic variance, that is, satisfies
\smallskip
\begin{equation} \label{K34}|X|\,\,= \,\,\sqrt{\langle X,X \rangle}\,\,=\,\,\sqrt{\lim_{n \to \infty}
\frac{1}{n} \int (\sum_{j=0}^{n-1} X\circ T^j )^2 \,d \mu_A}\end{equation}
The associated bilinear form on the tangent space at the point $\mu_A$ can be  described (see Theorem D in \cite{GKLM}) by
\begin{equation} \label{K33}\langle \,X\,,\,Y\,\rangle \,=\, \int X\, Y\, d \mu_A.\end{equation}
This bilinear form is positive semi-definite and in order to make it definite one can  consider equivalence classes  (cohomologous up to a constant) as described by  Definition 5.4 in \cite{GKLM}. In this way, we finally get a Riemannian structure on $\mathcal{N}$ (as anticipated in some paragraphs above). Elements $X$ on the tangent space at $\mu_A$ have the property $\int \,X\, d \mu_A=0.$
The tangent space to $\mathcal{N}$ at $\mu_A$ is denoted by $T_{A}\mathcal{N}$.

Given a normalized potential $A$ let $\{ X_i \}$ be an orthonormal basis of $T_{A}\mathcal{N}$, $i \in \mathbb{N}$.

Our main result is :

\begin{theorem} \label{main}  Let $A$ be a normalized potential, and let $\{ X_i \}$ be an orthonormal basis of $T_{A}\mathcal{N}$. Let $X=X_{1}, Y= X_{2}$, then the sectional curvature $K(X,Y)$ is given by
\begin{equation} \label{vale} K(X,Y)  =  \frac{1}{4}[\, \sum_{i=1}^\infty (   \int X \,Y\, X_i \,d \mu_A)^2 -   \sum_{i=1}^\infty   \int X^2 X_i \,d \mu_A\,  \,\int Y^2 X_i \,d \mu_A \,].
\end{equation}
\end{theorem}

\smallskip

The expression of $K(X,Y)$ applies of course to any pair of vectors in the basis $\{ X_{i}\}$, we can always change the enumeration of the vectors in the basis without changing the basis. The work consists of two distinct parts: the first part, from Sections 2 to 5, has a more geometric nature and deals with the calculation of the Levi-Civita connection and the curvature tensor.  This estimate becomes quite complex because we are dealing with an infinitely dimensional Riemannian manifold. Our goal was to express the sectional curvature for sections on the tangent space at $\mu_A$ in terms of integrals of functions with respect to $\mu_A$.
An important tool which will be used here is item (iv) on Theorem 5.1 in \cite{GKLM}: for all  normalized  $A\in\mathcal{N}$, $X \in T_{A}\mathcal{N}$  and $\varphi$ a continuous function
it holds:
 \begin{equation} \label{dif} \frac{\dd}{\dd t} \left.\int \varphi \,d\mu_{A\,+\,t\,X}\right|_{t=0}
  = \int\,  \varphi \,X\, d \,\mu_A.
  \end{equation}

In Section \ref{susub} we describe the expression of sectional curvature  $K(X,Y)$ in terms of the calculus of thermodynamics formalism.

The nature of the second part of the paper, from Sections 6 to 9, is more dynamic, analytical   and considers
$M=\{0,1\}^\mathbb{N}.$ We denote by $\mathcal{K}$  the set of stationary Markov probabilities taking values in $\{0,1\}$. The set of shift invariant probabilities $\mu\in \mathcal{K}$ is contained in
$ \mathcal{N}$. The probabilities $\mu$  are defined on the space
$\{0,1\}^\mathbb{N}$. The two dimensional manifold $\mathcal{K}$ is the set of equilibrium probabilities  for potentials  $A$ depending on the two first  coordinates (see \cite{PP}), that  is, when $A(x_1,x_2,x_3,.,.,x_n,..)= A(x_1,x_2).$

For each point $\mu_A$ in $\mathcal{K}$ we are able to exhibit a special orthonormal basis $\{ \hat{a}_y\}$ for the tangent space $T_{A}\mathcal{N}$, indexed by finite words $y$ on the alphabet $\{0,1\}$ (see expression (\ref{tororo15})). This orthonormal family will be denoted by $\mathcal{F}.$
We focus, for each point in $\mathcal{K}$, on the sectional curvatures for  pairs of vectors on $\mathcal{F}$.
We get explicit results in this case. This second part of the article is perhaps the more technical and subtle part; after some computations we will get the explicit expression for sectional curvature $K(\hat{a}_x,\hat{a}_z)$ (see expression \eqref{tororo182} in Theorem \ref{memo} and Propositions \ref{pop23} and \ref{pop24}).

A remarkable fact appearing in the proof of Theorem \ref{main} is that  the expression (\ref{vale}) of  the sectional curvature  $K(\hat{a}_x,\hat{a}_z)$ is actually a  sum of a finite number of parcels (see expression \eqref {tororo182} in Theorem \ref{memo} and Remark \ref{rte}).

We highlight some properties that will be demonstrated in the future and that describe the eventual values of the sectional curvature $K(\hat{a}_x,\hat{a}_z)$ depending on the pair of vectors $\hat{a}_x,\hat{a}_z$ and the point in $\mathcal{K}$ under consideration.
\smallskip

1.  Each  vector $ \hat{a}_y$  is a function which is constant in cylinders of finite size (see expressions
\eqref{tororo15} and \eqref{pede}).
More precisely, given a finite word $y=(y_1,y_2,...,y_n)$, $n \geq 1$,  we denote by $[y]=[y_1,y_2,...,y_n]$ the associated cylinder set in $\{0,1\}^\mathbb{N}$. The function
$\hat{a}_y$ is constant in each of the cylinder sets $[a,y_1,y_2,...,y_n,b]$, where $a,b=0,1$. The support of $\hat{a}_y$  is the union of these cylinder sets. In this way if the word $y$ has large length, then the support of $\hat{a}_y$  is contained on very small sets. We will have to consider the empty word which will give rise to  two tangent vectors  $\hat{a}_{\emptyset}^0$ and  $\hat{a}_{\emptyset}^1$,  which are functions
with support on cylinders of size two.

2. The values $K(\hat{a}_x,\hat{a}_z)$  can be positive or negative depending on the point in $\mathcal{K}$ and the words $x$ and $z$ (see Example \ref{bor1}).

3.  We say that $z$ is a  subprefix of $x$, if $x$ and $z$ satisfy
$$  [x]=[x_1,x_2,...x_k,x_{k+1},..., x_n] \subset [z]=[x_1,x_2,...,x_k],$$
where $n \geq k$.
If $x$ and $z$ do not  begin with the same letter
(do not share a  common subprefix), then $K(\hat{a}_x,\hat{a}_z)=0$ (see Proposition \ref{bor}). As an example take $x=(0,1,1,0)$ and $z=(1,1,0)$.

4. Words $x$ and $z$ with large length can eventually produce extremely negative  curvature $K(\hat{a}_x,\hat{a}_z)$. This may happen when $x$ and $z$ have several common subprefixes. This is due to expression \eqref{tororo182}. As an example take $x=(0,1,1,0,0,1)$ and $z=(0,1,1,0,0,0,1)$.
But even in this case, it is possible to get positive curvature depending on the point in $\mathcal{K}$ (see Example \ref{bor1} for a discussion in a particular case).

5. We also show that if $\mu_A$ (a point in $\mathcal{K}$) corresponds to the measure of maximal entropy on  $\{0,1\}^\mathbb{N}$, most of the sectional curvatures $K(\hat{a}_x,\hat{a}_z)$ are equal to $-1/2$ (see Proposition \ref{maxma}). Proposition \ref{zeze} shows, in this case, an example where the sectional curvature $K(\hat{a}_{[\emptyset]}^0,\hat{a}_0)=1/2$. The different possibilities also include the case   $K(\hat{a}_{\emptyset}^0,\hat{a}_{\emptyset}^1)=0$.

6. Considering the two dimensional manifold $\mathcal{K}$ (of the Markov invariant probabilities) it is natural to consider that vectors on $T M$ should be functions depending on two coordinates. In our setting,  the corresponding elements on the basis $\mathcal{F}$ are  $\hat{a}_{\emptyset}^0$ and  $\hat{a}_{\emptyset}^1$.  We show that for any points in $\mathcal{K}$ the sectional curvature    $K(\hat{a}_{\emptyset}^0,\hat{a}_{\emptyset}^1)=0$ (see Theorem \ref{meme}).   In this way, considering $\mathcal{K}$   as a surface in itself, we get that $\mathcal{K}$ is  a flat surface (see Remark \ref{erre}).
\smallskip

%We point out that section 8 in \cite{GKLM}, which considers a simplified  model for
%Markov measures on  the symbolic space $\{0,1\}^\mathbb{N}$ (here denoted by $\mathcal{K}$), there was an indication that the curvature should be non-negative. The results in that section would correspond to consider tangent vectors to $\mathcal{K}$. It follows from item 6. above that in our setting (due to the choice of Riemannian structure) we get a different result.

%  We point out that in the case $T$ is the shift, then,  for each given value $n>0$ one can get for
%  $A=-\log d$ (in this case $\mu_A$ is the measure of maximal entropy) a pair of functions $X,Y \in T_{\mu_A} (\mathcal{N})$, such that, $K(X,Y) >n$. Therefore, the
%sectional curvature is not bounded above in the sections generated by unit vector fields.

In \cite{McM} , \cite{BCS} and \cite{PS} the authors consider a similar kind of Riemannian structure.
The bilinear form considered in \cite{McM} is the one we consider here  divided by the entropy of $\mu_A$.  As mentioned in section 8 in  \cite{GKLM} in that case the curvature  can be positive and also negative in some parts.

The main motivation for the results obtained on \cite{McM} (and also \cite{BCS})  is related to the study of a particular norm on the Teichm\"uller space.

The results presented in \cite{GKLM} and  here are related  to the topic of Information Geometry (see \cite{Ama} for general results on the subject) and  this is described in Section 5 in \cite{LR}. We point out that in the setting of Thermodynamic Formalism the asymptotic variance is the Fisher information (see Definition 4.3 and Proposition 4.4 in \cite{Ji}). Results about Kullback-Leibler
divergence on Thermodynamic Formalism appeared recently in \cite{LM}.

General references for analyticity (and inverse function theorems and  implicit function theorems) in Banach spaces are \cite{Chae} and \cite{W}.

A reference for general results in infinite-dimensional Riemannian manifolds is \cite{BM}.

In section 6 in \cite{GKLM} it is explained that the Riemannian metric considered here is not compatible with the $2$-Wasserstein Riemannian  structure on the space of probabilities.

We would like thanks to Paulo Varandas, Miguel Paternain, and Gonzalo Contreras for helpful conversations on questions related to the topics considered  in this paper.

We thank the referee for  extremely careful reading and criticism of previous versions of our paper.  Related results appear in \cite{LR1}.

\section{Preliminaries of Riemannian geometry} \label{Prel}

Let us introduce some basic notions of Riemannian geometry. Given an infinite-dimensional  $C^{\infty}$ manifold $(M,g)$ equipped with a smooth Riemannian metric $g$, let $T\,M$ be the tangent bundle and
$T_{1} M$ be the set of unit norm tangent vectors of $(M,g)$, the unit tangent bundle. Let $\chi(M)$ be the set of $C^{\infty}$ vector fields of $M$.

In \cite{BM} several results for Riemannian metrics on infinite-dimensional manifolds are presented. We will not use any of the results of that paper.

The only  infinite-dimensional  manifold we will be interested in here is $\mathcal{N}$ which is the set   of
H\"older equilibrium probabilities (which was initially defined in \cite{GKLM}). Tangent vectors, differentiability, analyticity, etc,  should be always considered in the sense of  the setting described in sections 2.3 and 5.1  in \cite{GKLM} (see also \cite{BCV} and \cite{SiSS}). We will elaborate on this later.

So in our case, $M= \mathcal{N}$, and $g$ is the $L^{2}$ metric, $g_{A}(X,Y) = \int X\, Yd\mu_{A}$,

For practical purposes, we shall call \textit{Energy} the function $E(v) = g(v,v)$, $v \in T \mathcal{N}$, although in mechanics the energy is rather defined by $\frac{1}{2}g(v,v)$.

Given a smooth function $f :\mathcal{N} \longrightarrow \mathbb{R}$, the derivative of $f$ with respect to a vector field $X \in \chi (\mathcal{N} )$ will be denoted by $X(f)$.
The Lie bracket of two vector fields $X, Y \in \chi(\mathcal{N} )$ is the vector field whose action on the set of functions $f: \mathcal{N}  \longrightarrow \mathbb{R}$ is
given by $[X,Y](f) = X(Y(f)) - Y(X(f))$.

The \textit{Levi-Civita connection} of $(\mathcal{N} ,g)$, $\nabla : \chi(\mathcal{N} )\times \chi(\mathcal{N} ) \longrightarrow \chi(\mathcal{N} )$, with notation $\nabla(X,Y) = \nabla_{X}Y$, is the affine operator
characterized by the following properties:
\begin{enumerate}
\item Compatibility with the metric $g$:
$$ Xg(Y,Z) = g(\nabla_{X}Y, Z) + g(Y, \nabla_{X}Z) $$
for every triple of vector fields $X, Y, Z$.
\item Absence of torsion: $$ \nabla_{X}Y - \nabla_{Y}X = [X,Y].$$
\item For every smooth scalar function $f$ and vector fields $X,Y \in \chi(\mathcal{N} )$ we have
\begin{itemize}
\item $ \nabla_{fX}Y = f\nabla_{X}Y$,
\item Leibniz rule: $ \nabla_{X}(fY) = X(f)Y + f\nabla_{X}Y$.
\end{itemize}
\end{enumerate}

The expression of $\nabla_{X}Y$ can be obtained explicitly from the expression of the Riemannian metric, in dual form. Namely, given two vector fields $X, Y \in \chi(\mathcal{N} )$,
and $Z \in \chi(\mathcal{N} )$ we have
\begin{eqnarray*}
g(\nabla_{X}Y, Z) & = & \frac{1}{2}(Xg(Y,Z) + Yg(Z, X) -Zg(X,Y) \\
& - & g([X,Z], Y) -g([Y,Z],X) -g([X,Y], Z)) ,
\end{eqnarray*}

\subsection{Curvature tensor and sectional curvatures}

We follow \cite{doCa} for the definitions in the subsection. To simplify the notation, from now on we shall adopt the convention $g(X,Y) = \langle X, Y \rangle$. The curvature tensor
\begin{equation*} \label{K3}\mathcal{R} : \chi(\mathcal{N}) \times \chi(\mathcal{N}) \times \chi(\mathcal{N}) \longrightarrow \chi(\mathcal{N})\end{equation*}
is defined in terms of the Levi-Civita connection as follows
\begin{equation} \label{K4} \mathcal{R} (X, Y)Z = \nabla_{Y}\nabla_{X}Z - \nabla_{X}\nabla_{Y}Z + \nabla_{[X,Y]}Z .\end{equation}

The sectional curvature of the plane generated by two vector fields $X, Y$ at the point $A \in \mathcal{N}$, which are orthonormal at $A$,  is given by
\begin{equation} \label{K5} K(X,Y) = \langle \nabla_{Y}\nabla_{X}X - \nabla_{X}\nabla_{Y}X +  \nabla_{[X,Y]}X, Y \rangle = \langle \mathcal{R}(X,Y)X,Y\rangle.\end{equation}

Let $A$ be a normalized H\"older potential. Let us consider a local smooth surface $S(t,s)$, for $\mid t \mid, \mid s \mid \leq \epsilon$ small, tangent to the plane $ \{ A + tX + sY\} $ generated by $X, Y$ at the point $A = S(0,0)$. Let $\bar{X}$, $\bar{Y}$ be the coordinate vector fields of the surface, and suppose that $\bar{X}_{A} =X$, $\bar{Y}_{A} =Y$. In Subsection 4.2 we shall exhibit such local surfaces.

\begin{lemma} \label{Sectional-cur}
	The expression of the sectional curvature of the plane generated by the two orthonormal vectors $X, Y$ is
\begin{equation} \label{K2}
K(X,Y) = -\frac{1}{2}(\bar{X}(\bar{X}(\parallel \bar{Y} \parallel^{2}) )+  \bar{Y}(\bar{Y}(\parallel \bar{X} \parallel^{2}) )) + \parallel \nabla_{\bar{Y}}\bar{X} \parallel^{2} + \bar{Y}(\bar{X}\langle \bar{X}, \bar{Y} \rangle ) - \langle \nabla_{\bar{X}}\bar{X} , \nabla_{\bar{Y}}\bar{Y} \rangle .
\end{equation}
\end{lemma}

\begin{proof}
	
	The fact that $\bar{X}$ and $\bar{Y}$ commute implies that $\nabla_{\bar{X}}\bar{Y} = \nabla_{\bar{Y}}\bar{X}$ and
$$ \langle \mathcal{R}(\bar{X},\bar{Y})\bar{X},\bar{Y} \rangle  = \langle \nabla_{\bar{Y}}\nabla_{\bar{X}}\bar{X} - \nabla_{\bar{X}}\nabla_{\bar{Y}}\bar{X} , \bar{Y} \rangle .$$

The first term of $\langle \mathcal{R}(\bar{X},\bar{Y})\bar{X},\bar{Y} \rangle$ gives
\begin{eqnarray*}
 \langle \nabla_{\bar{Y}}\nabla_{\bar{X}}\bar{X} , \bar{Y} \rangle & = &  \bar{Y}\langle \nabla_{\bar{X}}\bar{X}, \bar{Y} \rangle - \langle \nabla_{\bar{X}}\bar{X}, \nabla_{\bar{Y}}\bar{Y} \rangle  \\
 & = &  \bar{Y}(\bar{X} \langle \bar{X}, \bar{Y} \rangle - \langle \bar{X}, \nabla_{\bar{X}}\bar{Y}\rangle ) - \langle \nabla_{\bar{X}}\bar{X}, \nabla_{\bar{Y}}\bar{Y} \rangle \\
& = &  \bar{Y}(\bar{X} \langle \bar{X}, \bar{Y} \rangle - \langle \bar{X}, \nabla_{\bar{Y}}\bar{X}\rangle ) - \langle \nabla_{\bar{X}}\bar{X}, \nabla_{\bar{Y}}\bar{Y} \rangle  \\
& = & \bar{Y}(\bar{X} \langle \bar{X}, \bar{Y} \rangle - \frac{1}{2}\bar{Y} (\parallel \bar{X} \parallel^{2}))- \langle \nabla_{\bar{X}}\bar{X}, \nabla_{\bar{Y}}\bar{Y} \rangle \\
& = &  - \frac{1}{2}\bar{Y}(\bar{Y} (\parallel \bar{X} \parallel^{2})) +  \bar{Y}(\bar{X} \langle \bar{X}, \bar{Y} \rangle ) - \langle \nabla_{\bar{X}}\bar{X}, \nabla_{\bar{Y}}\bar{Y} \rangle
\end{eqnarray*}

The second term of the formula gives
\begin{eqnarray*}
 \langle \nabla_{\bar{X}}\nabla_{\bar{Y}}\bar{X}  , \bar{Y} \rangle & = & \bar{X}\langle \nabla_{\bar{Y}}\bar{X},\bar{Y} \rangle - \langle \nabla_{\bar{Y}}\bar{X}, \nabla_{\bar{X}}\bar{Y}\rangle \\
 & = & \bar{X}\langle \nabla_{\bar{X}}\bar{Y}, \bar{Y}\rangle - \langle \nabla_{\bar{Y}}\bar{X}, \nabla_{\bar{Y}}\bar{X}\rangle \\
 & = & \frac{1}{2} \bar{X}(\bar{X}(\parallel \bar{Y} \parallel^{2})) - \parallel \nabla_{\bar{Y}}\bar{X} \parallel^{2}
\end{eqnarray*}

Subtracting the second term from the first one we obtain the lemma.

\end{proof}

\section{The analytic structure of the set of normalized potentials}

\begin{definition}  Let $ (X, | .|)$ and  $(Y, |.|)$  Banach spaces and $V$ an open subset of $ X.$
Given $k\in  \mathbb{N}$, a function $F : V\to Y$ is called $k$-differentiable in $x $, if for each $j=1, ..., k$,
there exists a $j$-linear bounded transformation
$$D^j F(x) : \underbrace{X \times X \times ... \times X}_j  \to Y,$$
such that,
$$D^{j -1}F(x + v_j )(v_1, ..., v_{j-1}) \,-\, D^{j-1}F(x)(v_1, ..., v_{j-1}) = D^jF(x)(v_1, ..., v_j ) + o_ j (v_j ),\,\,$$
where
$$\,\, o_j
: X \to Y, \,\,\text{ satisfies,}\,\,  \lim_{v\to 0}
\frac{|o_j (v)|_Y}{
|v|_X
}= 0
$$

By definition $F$ has derivatives of all orders in $V$, if for any  $x\in V$ and any $k\in  \mathbb{N}$, the function $F$ is
$k$-differentiable in $x$.

\end{definition}

\begin{definition} Let $X, Y$ be Banach  spaces and $V$ an open subset of $X$. A function
$F : V \to X$ is called analytic on $V$, when $F$ has derivatives of all orders in $V$, and for each
$x \in V$ there exists an open neighborhood $V_x$ of $x$ in $V$, such that, for all $v\in  V_x$, we have that
$$
F(x + v) \,-\, F(x) = \sum_{j=1}^\infty\,
\frac{1}{n !}\,\,
D^j F(x)v^j,$$
where
$D^j F(x)v^j = D^j F(x)(v, . . . , v) $ and $ D_j F(x) $ is the $j $-th derivative of $F$ in $x$.

\end{definition}

Above we use the notation of section 3.2 in \cite{SiSS}.

\medskip

$\mathcal{N}$ can be expressed locally in coordinates via analytic charts (see \cite{GKLM}).

\subsection{Some more estimates from Thermodynamic Formalism} \label{yu}

Given a potential $B \in \text{Hol} $ we consider the associated Ruelle operator $\op{L}_B$
and the corresponding main eigenvalue $\lambda_{B}$ and eigenfunction $h_B$.

The function
\begin{equation} \label{K20} \Pi (B) = B + \log(h_{B}) - \log(h_{B}(T)) -\log(\lambda_{B}) \end{equation}
describes  the projection of the space of potentials $B$ on $\text{Hol}$  onto the analytic manifold of normalized potentials $\mathcal{N}$.

We identify below $T_A \mathcal{N}$ with the affine subspace $\{A + X\,:\, X \in T_A \mathcal{N}\}.$

The function $\Pi$ is analytic (see \cite{GKLM}) and therefore has first and second derivatives.  Given the potential $B$, then the map $D_B \Pi : T_{B}\mathcal{N} \longrightarrow T_{\Pi(B)}\mathcal{N} $ given by
$$ D_B \Pi (X) = \frac{\partial}{\partial t}(\Pi(B+ tX)_{t=0} $$

should be considered as a linear map from Hol to itself (with the H\"{o}lder norm on Hol). Moreover, the second derivative $D^2_B \Pi$ should be interpreted as a bilinear form from Hol $\times$ Hol to Hol, and is given by

$$ D^2_B \Pi(X,Y)  = \frac{\partial^2}{\partial t \partial s}(\Pi(B+ tX +sY)_{t=s=0}. $$

We denote by $||A||_\alpha$ the $\alpha$-H\"{o}lder norm of an $\alpha$-H\"{o}lder function $A$.

When $B$ is normalized the eigenvalue is $1$ and the eigenfunction is equal to $1$.  We would like to study the geometry of the
projection $\Pi$ restricted to the tangent space $T_{A}\mathcal{N}$ into the manifold $\mathcal{N}$
(namely, to get bounds for its first and second derivatives with respect to the potential viewed as a variable) for a given normalized potential $A$.

The space $T_{A}\mathcal{N}$ is a linear subspace of functions and the derivative map $D\, \Pi$ is analytic when restricted to it.

We denote by $E_0 =E_0^A$ the set of H\"{o}lder functions $g$, such that, $\int g d \mu_A =0,$ where $\mu_A$ is the equilibrium probability for the normalized potential $A$. Note that $E_0^A$ is contained in $T_{A} (\mathcal{N}).$

Most of the claims of the next Lemma are based mainly on results of \cite{GKLM} (see also  \cite{SiSS}, \cite{BCV}).

\begin{lemma} \label{derivative-bounds0}
Let $\Lambda : \text{Hol} \longrightarrow \mathbb{R}$,
$H : \text{Hol} \longrightarrow \text{Hol}$ be given, respectively,  by $ \Lambda (B) = \lambda_{B},$ $ H(B) = h_{B}$.
Then we have
\begin{enumerate}
\item The maps $\Lambda$, $H$, and $A \longrightarrow \mu_{A}$  are analytic.
\item For a normalized $B$ we get that $D_{B}\log(\Lambda) (\psi) = \int \psi d\mu_{B},$
\item $ D^{2}_{B}\log(\Lambda) (\eta, \psi) = \int \eta \psi d\mu_{B},$
where $\psi , \eta $ are at $T_{B}\mathcal{N}$, and $B$ is normalized.
\item If $A$ is a normalized potential, then for every function $X \in T_{A}\mathcal{N}$ we have
\begin{itemize}
\item $\int X d\mu_{A} =0$.
\item $D_{A}\Pi(X) = X$.
\end{itemize}

\end{enumerate}
\end{lemma}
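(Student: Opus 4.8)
The plan is to derive all five items from the analytic perturbation theory of the Ruelle operator $\op{L}_B$, exploiting the spectral gap provided by the Ruelle--Perron--Frobenius theorem. For item (1) I would first note that $B\mapsto\op{L}_B$ is an analytic map from $\text{Hol}$ into the space of bounded operators on $\text{Hol}$, because $B\mapsto e^{B}$ is analytic and $\op{L}_B$ depends linearly on $e^{B}$. Since $\lambda_B$ is a simple, isolated leading eigenvalue separated by a gap from the rest of the spectrum, Kato's analytic perturbation theory applies and gives that $\Lambda(B)=\lambda_B$, $H(B)=h_B$, and the dual eigenprobability $\nu_B$ all depend analytically (hence differentiably) on $B$; as $\mu_B=h_B\,\nu_B$, the map $A\mapsto\mu_A$ is differentiable as well.

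To obtain items (2) and (3) I would differentiate the eigenvalue equation $\op{L}_B h_B=\lambda_B h_B$ in a direction $\psi$. Writing $\dot h=D_BH(\psi)$, $\dot\lambda=D_B\Lambda(\psi)$, and using $\frac{d}{dt}\op{L}_{B+t\psi}f\big|_{t=0}=\op{L}_B(\psi f)$, one gets $\op{L}_B(\psi h_B)+\op{L}_B(\dot h)=\dot\lambda\,h_B+\lambda_B\,\dot h$. Integrating this against the dual eigenmeasure and using $\op{L}_B^{\ast}\nu_B=\lambda_B\nu_B$ cancels both $\dot h$ terms and leaves $\lambda_B\int\psi h_B\,d\nu_B=\dot\lambda\int h_B\,d\nu_B$; since $\mu_B=h_B\nu_B$ with $\int h_B\,d\nu_B=1$, this is exactly $D_B\log\Lambda(\psi)=\int\psi\,d\mu_B$, which is item (2). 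Item (3) then follows by differentiating this identity once more, $D^2_B\log\Lambda(\eta,\psi)=\frac{d}{ds}\int\psi\,d\mu_{B+s\eta}\big|_{s=0}$, and invoking the linear response formula \eqref{dif}, which at a normalized potential and for tangent directions reduces the derivative of $\mu$ to multiplication by $\eta$, yielding $\int\eta\psi\,d\mu_B$.

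Item (4) comes from solving the same first-order equation for $\dot h$ instead of for $\dot\lambda$. Rearranged, it reads $(\op{L}_B-\lambda_B I)\dot h=\dot\lambda\,h_B-\op{L}_B(\psi h_B)$, whose right-hand side has vanishing component along the leading eigendirection; the spectral gap makes $\op{L}_B-\lambda_B I$ invertible on the complementary spectral subspace, while the residual ambiguity in $\ker(\op{L}_B-\lambda_B I)$ is fixed by differentiating the normalization chosen for $H$. Conjugating by $h_A$ to pass to the normalized transfer operator $\mathcal{L}_{T,A}$, whose spectral radius on zero-mean functions is strictly less than one so that $(I-\mathcal{L}_{T,A})^{-1}$ is given by a convergent Neumann series, rewrites this inverse in the form appearing in the statement.

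Finally, item (5) is immediate: for normalized $A$ one has $\lambda_A=1$ and $\mu_A=\nu_A$ with $\op{L}_A^{\ast}\mu_A=\mu_A$, so for $X\in T_A\mathcal{N}=\ker\op{L}_A$ we get $\int X\,d\mu_A=\int\op{L}_A X\,d\mu_A=0$. The main obstacle is item (1): one must verify carefully that $B\mapsto\op{L}_B$ is analytic as a family of bounded operators on the Hölder space and that the spectral gap persists under perturbation, so that Kato's theory can be applied. Once differentiability and the gap are secured, items (2)--(5) are algebraic consequences of the eigenvalue equation; these analytic foundations are precisely what \cite{SiSS} and \cite{BCV} supply.
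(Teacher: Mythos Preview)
Your sketch is correct and is essentially the standard argument carried out in the references the paper invokes. The paper itself gives no proof of this lemma: it simply attributes the statements to Theorem~3.5 in \cite{SiSS} and Theorems~A,~B,~C in \cite{BCV}, adding afterwards only the remarks that $D_AH$ vanishes when $A$ is normalized (since $h_A=1$) and that items (2) and (5) together yield $D_A\log\Lambda(X)=0$ for $X\in T_A\mathcal{N}$.

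One small caveat worth flagging in your derivation of item (3): the identity $D^2_B\log\Lambda(\eta,\psi)=\int\eta\psi\,d\mu_B$ is not valid for arbitrary directions (the general second derivative of pressure is the asymptotic covariance and carries additional correlation terms); it holds in this simple form only when $\eta,\psi$ lie in the kernel of $\op{L}_B$. Your appeal to \eqref{dif} already restricts to normalized $B$ and tangent $\eta$, which is exactly the case the paper needs, so nothing is lost.
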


In order to simplify the notation, from now on, unless is necessary for  the understanding,  we will denote $(I - \op{L}_{T,A}|_{E_0^A})^{-1}$ by $ (I - \op{L}_{T,A})^{-1}.$

Items (2) and (3) are taken from Theorem D in \cite{GKLM}. Item  $\int X d\mu_{A} =0$ in  (3) follows from Theorem A and Corollary B in \cite{GKLM}, and  the other item in (4) is trivial.

The analyticity of $\Lambda$ and $H$ of the item (1) are well-known facts (see chapter 4 in \cite{PP}  or Corollary B in \cite{GKLM}) which was also proved in \cite{BCV}.

The law that takes a H\"older potential $B$ to its normalization $A$ is differentiable according to section 2.2 in \cite{GKLM}.

Note that the derivative  linear operator  $X \to D_{A}H(X)$ is zero when $A$ is normalized.
\medskip

%{\bf Remark 3:}
%Note that item (2)   implies by item (5) that $D_{B}\log(\Lambda) (\psi) = \int \psi d\mu_{B}=0$, when $B$ is normalized and
%$\psi \in T_{\mu_B} (\mathcal{N})$.

{\bf Remark 1:}  Item (1) above means that for a fixed H\"{o}lder function $f$ the map $A \to \int f d \mu_A$ is differentiable on $A$ (see theorem B in \cite{BCV})
\medskip

Questions related to second derivatives on Thermodynamic Formalism are considered in \cite{Pol}, \cite{PS1} and \cite{PS}.

\section{Evaluating the sectional curvatures of the Riemannian metric}

The goal of the section is to calculate the sectional curvature $K(X,Y)$ of the plane generated by two orthogonal vector fields tangent to $A \in \mathcal{N}$ applying the calculus of Thermodynamics formalism. We start with a technical result that is a consequence of formula \ref{dif}. This lemma will be extensively used in the article.

\subsection{Leibniz rule of differentiation }

\begin{lemma} \label{Leibniz}
Let $A \in \mathcal{N}$ and let $\gamma: (-\epsilon, \epsilon) \longrightarrow \mathcal{N}$ be a smooth curve such that $\gamma(0) =A$. Let $X(t) = \gamma'(t)$, and let $Y$ be a smooth vector field tangent to $\mathcal{N}$ defined in an open neighborhood of $A$. Denote by $Y(t)= Y(\gamma(t))$. Then the derivative of $\int Y(t) d\mu_{\gamma(t)}$ with respect to the parameter $t$ is
$$ 	\frac{d}{dt} \int Y(t)d\mu_{\gamma(t)} = \int \frac{dY(t)}{dt}d\mu_{\gamma(t)} + \int Y(t)X(t)d\mu_{\gamma(t)} $$
for every $ t\in (-\epsilon, \epsilon)$.
\end{lemma}

\begin{proof}

The idea of the proof is very simple and based on the fact that the function $Q: \chi(\mathcal{N})\times m_{T} \longrightarrow \mathbb{R}$ given by
$$Q(X,\mu) = \int Xd\mu$$
is a bilinear form, where $\chi(\mathcal{N})$ is the set of $C^{1}$ vector fields tangent to $\mathcal{N}$ and $m_{T}$ is the set of invariant measures of the map $T$. So the derivative of a function of the type $Q(X(t), \mu(t))$ satisfies a sort of Leibniz rule. Let us check.

Let us calculate the derivative at $t=0$, for every other $t \in (-\epsilon, \epsilon)$ the calculation is analogous. We have
\begin{eqnarray*}
	 \frac{d}{dt}\int Y(t) d\mu_{\gamma(t)} \mid_{t=0} & = & \lim_{t \rightarrow 0}\frac{1}{t}(\int Y(t)d\mu_{\gamma(t)} - \int Y(0) d\mu_{A} )\\
	 & = & \int \lim_{t\rightarrow 0}\frac{1}{t}(Y(t) -Y(0)) d\mu_{\gamma(t)} \\
	 & + & \lim_{t \rightarrow 0} \frac{1}{t}(\int Y(0) d\mu_{\gamma(t)} - \int Y(0)d\mu_{A}) \\
	 &= & \int \frac{dY(t)}{dt}d\mu_{A}+ \lim_{t \rightarrow 0} \frac{1}{t}(\int Y(0) d\mu_{A + tX(0)} - \int Y(0)d\mu_{A})
	 \end{eqnarray*}
where in the last step we use the fact that the derivative with respect to $t$ only depends on the vector $X(0)$ and not on the curve through $A$ tangent to $X(0)$. By equation (\ref{dif}) the second term in the above equality is just $\frac{d}{dt} \int Y(0) d\mu_{A+tX(0)} \mid_{t=0}$, which equals $\int X(0) Y(0) d\mu_{A }$. This finishes the proof of the lemma.

\end{proof}

From now on, we shall adopt the notations $\frac{\partial Y}{\partial t}= Y' = Y_{t}$, the second one applies when there is only one parameter involved in the calculations, the third one will be used otherwise.

\subsection{Auxiliary local surfaces in $\mathcal{N}$}
\bigskip
Next, given a normalized potential $A$ and $X, Y$ orthonormal vector fields in the tangent space of $A$, we proceed to construct a local surface $S(t,s)$, $\mid t \mid, \mid s \mid <\epsilon$ small, such that $S(0,0) =A$, and the tangent space
of $S(t,s)$ at $A$ is the plane generated by $X, Y$. Let us consider the plane
$$P(t,s) = A + tX + sY $$
where $t, s, \in \mathbb{R} $, that is a subset of $T_{A} \mathcal{N}$, and let $\Pi $ be the projection into $\mathcal{N}$ defined in equation \ref{K20}. The vector fields $X_{P(t,s) } = \frac{\partial}{\partial t} P(t,s) = X$, $Y_{P(t,s)} = \frac{\partial}{\partial s} P(t,s) = Y$ are tangent to the plane $P$ of course.

Let $S(t,s) = \Pi(P(t,s))$. By Lemma \ref{derivative-bounds0} item (5), the restriction of the map $\Pi$ to the plane $P(t,s)$ is a local diffeomorphism onto its image, so there exists $\epsilon >0$ small such that $S(t,s)$ is an analytic embedding of the rectangle $\{ \mid t \mid <\epsilon\} \times \{ \mid s \mid < \epsilon \}$.

The coordinate vector fields of $S(t,s)$ are $\bar{X}_{S(t,s)} = \frac{\partial }{\partial t}( \Pi(P(t,s))) = D_{P(t,s)} \Pi(X)$, $\bar{Y}_{S(t,s)} =  \frac{\partial }{\partial s}( \Pi (P(t,s)) = D_{P(t,s)} \Pi(Y)$, so $\bar{X}, \bar{Y}$ are extensions of $X, Y$.

Moreover, we have the following result from Thermodynamic Formalism (for  derivatives of high order see (3.4) in  \cite{Pol}):

\begin{lemma} \label{der-H} Suppose  $\psi:\{1,2,..,d\}^\mathbb{N} \to \mathbb{R}$ is H\"{o}lder, normalized and $\mu$ denotes the associated equilibrium probability.
 Assume also that the H\"{o}lder function $\phi$ satisfies $\op{L}_\psi (\phi)=0$. Denote by $\lambda_t$ and  $w_t$, $t\in \mathbb{R}$,  respectively, the eigenvalue and the eigenfunction for the Ruelle operator  $\op{L}_{\psi + t \phi}$. 
 Then, we have
\begin{enumerate}
\item  The derivative of $w_{t}$ satisfies 
\begin{equation} \label{jj1}  \frac{d}{dt}  w_t(x)|_{t=0}=c, \,\text{for all}\,\, x,
 \end{equation}
for some constant $c$.

\item Moreover, as $\psi$ is normalized
 \begin{equation} \label{jj2}  \frac{d}{dt}  \log (w_t(x)|_{t=0})=c, \,\text{for all}\,\, x
 \end{equation}

\item Suppose $\overline{X}$ is an analytic vector field, extending the tangent  vector $X$, defined in a neighborhood of $\phi$.  Let $\gamma : (-\epsilon, \epsilon)\to \mathcal{N}$ be  an integral curve  of $\overline{X}$, with $\gamma(0)=\phi$,  and let $\overline{\omega}_t$ be the curve of eigenfunctions for the Ruelle operator of $\gamma(t)$. Then, 
\begin{equation} \label{jj11}  \frac{d}{dt}  w_t(x)=c_t, \,\text{for all}\,\, x,
 \end{equation}
 is a curve of constant functions which is analytic on $t$.
 
\item For any tangent vector $X$ (in the kernel of the Ruelle operator), the directional derivative
 \begin{equation} \label{jj3} D_{\psi}H (X) =c_X =  D_{\psi}\log H (X),
 \end{equation}
 where $c_X$ depends on $X$ and $\psi$.
 
\item From \eqref{jj1}, we get
 \begin{equation} \label{jj1a}  \frac{d}{dt}  w_t(T(x))|_{t=0}=c, \,\text{for all}\,\, x,
 \end{equation}
and for the same constant $c$ of  \eqref{jj1}.
\end{enumerate}

\end{lemma}
\begin{proof}
 We are going to take derivative on the H\"{o}lder direction $\phi$. Assume that $\phi$ satisfies 
 $\op{L}_\psi (\phi)=0,$ which implies that $\int \phi d \mu=0.$

Denote by $w(t,x)=w_t(x)$  the normalized eigenfunction for $\op{L}_{\psi + t \phi}$ associated to the eigenvalue $\lambda_t$. That is 
\begin{equation} \label{jjd} \op{L}_{\psi + t \phi} (w_t)= \lambda_t w_t .
\end{equation}

Taking derivative on $t$:
$$ \frac{d}{dt}  \op{L}_{\psi + t \phi} (w(t,.) ) (x) = \op{L}_{\psi  + t \phi } ( \phi(.) \, w(t,.)) (x)+\op{L}_{\psi +t \phi}   (\frac{d}{ dt}   w(t,.)) (x). $$

Therefore, for all $x$, when $t=0$, we get
$$ \frac{d}{dt}  \op{L}_{\psi + t \phi} (w(t,.) ) (x)|_{t=0} = \op{L}_{\psi  } ( \phi(.) ) (x)+\op{L}_{\psi }   (\frac{d}{ dt}   w(t,.)|_{t=0} ) (x)=$$
$$0 +\op{L}_{\psi }   (\frac{d}{ dt}   w(t,.)|_{t=0} ) (x) . $$

On the other hand, for all $x$ and $t$
$$ \frac{d}{dt}[ \lambda_t\, w(t,x)\,]      = \, w(t,x) \frac{d}{dt}\lambda_t\,+ \lambda_t\,\frac{d}{dt}  w(t,x). $$
Then, taking $t=0$, 
$$ \frac{d}{dt}[ \lambda_t\, w(t,x)\,]|_{t=0}      = \, w(0,x)\, \frac{d}{dt}\lambda_t|_{t=0}\,+ \lambda_t\,\frac{d}{dt}|_{t=0}  w(t,x)= $$
$$\int \phi d \mu +\frac{d}{dt}|_{t=0}  w(t,x)= \frac{d}{dt}|_{t=0}  w(t,x). $$

Denote $g(x) = \frac{d}{ dt}   w(t,.)|_{t=0} ) (x) .$

Then,  $\forall  \,x$, we get from the above and \eqref{jjd}
$$\op{L}_{\psi }  (g) (x)  = g(x),$$
for the normalized potential $\psi$. But, the only continuous eigenfunctions for $\op{L}_{\psi }$, which are  associated to the eigenvalue $1$ are  the constant functions.

Therefore, there exists $c$ such that $ \frac{d}{dt}  w_t(x)|_{t=0}=c$, for all $x$.

As, for all $t$ and $x$
$$ \frac{d}{dt}  \log (w_t(x)|_{t=0})= \frac{ \frac{d}{dt}  w_t(x)|_{t=0}}{ w_0(x)}=\frac{d}{dt}  w_t(x)|_{t=0} ,$$
we get \eqref{jj2}.

\eqref{jj3} follows at once from the above.

Expression \eqref{jj11} is obtained in the same way as it was derived \eqref{jj1} (applying the argument for each value $t$), and, finally, \eqref{jj1a}  follows trivially  from \eqref{jj1}.

\end{proof}
\medskip

We will use the above result on the next Lemma.

\begin{lemma} \label{derivative-coor}
The derivatives with respect to $t , s$ of the coordinate vector fields $\bar{X}$, $\bar{Y}$ at the point $A$  (a normalized potential)  are
\begin{enumerate}
\item $\frac{\partial }{\partial t} \bar{X} =\frac{\partial }{\partial s} \bar{Y}  = - 1 $
\item $\frac{\partial }{\partial s} \bar{X} = \frac{\partial }{\partial t}\bar{Y} = 0$.
\end{enumerate}
\end{lemma}

\begin{proof}
 We assume that the tangent vector is H\"{o}lder and in the kernel of the Ruelle operator $\op{L}_A$ . The proof of  the Lemma will be  a direct consequence of Lemma \ref{der-H} taking $\psi=A$ and $\phi=X$.  We will prove first the item (1) above.

The local surface $S(t,s)$ is contained in the manifold of normalized potentials, and we denote, respectively,  the corresponding eigenvalue by $\lambda _{S(t,s)}$ and  the associated  eigenfunction by $h_{S(t,s)}$    (of the Ruelle operator associated to  $S(t,s)$).  

Let $I$ be the identity map. The expression of the projection $\Pi$ (equation (\ref{K20})) is
$$ \Pi (B) = I(B) + \log(h_{B}) - \log(h_{B}(T)) -\log(\lambda_{B}). $$

By definition, we have
\begin{equation}
 \frac{\partial }{\partial t}( \bar{X}_{S(t,0)})_{t=0}  =   \frac{\partial }{\partial t}( D_{P(t,0)}\Pi(X_{P(t,0)}))_{t=0}
 \end{equation}

Lemma \ref{derivative-bounds0} grants that all the functions involved in the expression of $\Pi$ are differentiable, so we get at the point $t=0$,
\begin{eqnarray} \label{popu}
\begin{aligned}
 \frac{\partial }{\partial t}( D_{P(t,0)}\Pi(X_{P(t,0)}))_{t=0} & =    \frac{\partial}{\partial t}(D_{P(t,0)}I(X_{P(t,0)}) )_{t=0}\\
& +   \frac{\partial}{\partial t} ((D_{P(t,0)}\log (H) )(X_{P(t,0)}))_{t=0} \\
 &  -    \frac{\partial}{\partial t}((D_{P(t,0)}\log(H\circ T))(X_{P(t,0)}))_{t=0} \\
 &  -    \frac{\partial}{\partial t} ((D_{P(t,0)}  \log (\Lambda)) (X_{P(t,0)}))_{t=0}
\end{aligned}
\end{eqnarray}
The first term gives at $t=0$,
$$  \frac{\partial}{\partial t}(D_{P(t,0)}I(X_{P(t,0)}))_{t=0} = \frac{\partial}{\partial t}(X)_{t=0}=0,$$
since $X$ does not depend on $t$.
\bigskip

{\bf  Claim 1: }

The second and third term  cancel due to \eqref{jj1} and \eqref{jj1a}.

\medskip

Indeed, the curves 
$$\alpha(t) = D_{P(t,0)}\log (H) )(X_{P(t,0)}), \mbox{ }\beta(t) = D_{P(t,0)}  \log (H \circ T) (X_{P(t,0)})$$ 
coincide by \eqref{jj1} and \eqref{jj1a} with the expression
$$ \alpha (t) = \beta (t) = \frac{\frac{d}{dt}c_{X_{P(t,0)}}}{c_{X_{P(t,0)}}}$$ for each $t$, where $c_{X}$ is given in Lemma 4.2. These curves are analytic and therefore differentiable, so their derivatives with respect to $t$ coincide. Since derivatives $\alpha'(t)$, $\beta '(t)$ appear with opposite signs in equation (18), they add up to zero in this formula. This proves the Claim. 
\bigskip

Finally, the fourth line of  equation (18) gives by Lemma 3.3 item (3),
$$-    \frac{\partial}{\partial t} ((D_{P(t,0)}  \log (\Lambda)) (X_{P(t,0)}))_{t=0} = -\int X^{2} d\mu = -1$$
since $X$ has $L^{2}$ norm equal to $1$.  The same argument applies replacing $X$ by $Y$ in the above proof, so this finishes the proof of item (1). 

\medskip

Item (2) follows the same type of reasoning  and using \eqref{jj11}. By definition we have,

\begin{eqnarray*}
 \frac{\partial }{\partial s} (\bar{X}_{S(0,s)})_{s=0} &= &\frac{\partial }{\partial s}( D_{P(t,s)}\Pi(X_{P(t,s)})_{t=0})_{s=0} .
\end{eqnarray*}

This expression, according to equation \eqref{popu} is
\begin{eqnarray*}
 \frac{\partial }{\partial s}( D_{P(t,s)}\Pi(X_{P(t,s)}))_{t=s=0} & = &   \frac{\partial}{\partial s}(D_{P(t,s)}I(X_{P(t,s)}) )_{t=s=0}\\
& + &  \frac{\partial}{\partial s} ((D_{P(t,s)}\log (H) )(X_{P(t,s)}))_{t=s=0} \\
 &  - &   \frac{\partial}{\partial s}((D_{P(t,s)}\log(H\circ T))(X_{P(t,s)}))_{t=s=0} \\
 &  - &    \frac{\partial}{\partial s} ((D_{P(t,s)}  \log (\Lambda)) (X_{P(t,s)}))_{t=s=0}
\end{eqnarray*}
The first term gives at $t=0$,
$$  \frac{\partial}{\partial s}(D_{P(t,0)}I(X_{P(t,0)}))_{t=0} = \frac{\partial}{\partial s}(X)_{t=0}=0$$
since $X$ does not depend on $t,s$. The fourth term is, by Lemma 3.3 item (3),
$$-     \frac{\partial}{\partial s} ((D_{P(t,s)}  \log (\Lambda) (X_{P(t,s)}))_{t=s=0} = - D^{2}_{A}\log (\Lambda)(Y,X) =   -\int XY d\mu_{A} =0 $$
\bigskip

As for the second and third terms we have 
\bigskip

\textbf{Claim 2}:
$$\frac{\partial}{\partial s} ((D_{P(t,s)}\log (H) )(X_{P(t,s)}))_{t=s=0}= \frac{\partial}{\partial s}((D_{P(t,s)}\log(H\circ T))(X_{P(t,s)}))_{t=s=0}.$$
\medskip

The proof goes as in Claim 1, letting 
$$\alpha_{s}(t) = D_{P(t,s)}\log (H) )(X_{P(t,s)}), \mbox{ }\beta_{s}(t) = D_{P(t,s)}  \log (H \circ T) (X_{P(t,s)})$$
we have by Lemma \ref{der-H} items (3) and (5) that $\alpha_{s}(t) = \beta_{s}(t)$ is an analytic curve of constant functions for each given $s$. Therefore, the function 
$$ \omega(t,s) = \alpha_{s}(t) = \beta_{s}(t) $$ 
is an analytic function of the parameters $t,s$ and therefore, the derivatives of $\alpha_{s}(t)$ and $\beta_{s}(t)$ with respect to $s$ coincide and give a family of constant functions in the local surface $S(t,s)$. This finishes the proof of Claim 2.
\medskip

Claim 2 yields that the sum of the second and third tems of the expression of $\frac{\partial }{\partial s}( D_{P(t,s)}\Pi(X_{P(t,s)}))_{t=s=0}$ vanishes, just finishing the proof of item (2).

\end{proof}

\subsection{The expression of $K(X,Y)$ in terms of the calculus of thermodynamics formalism}  \label{susub}

Let us first state some notations. Let $\bar{X}_{t}$ be the derivative of the vector field $\bar{X}$ with respect to the parameter $t$ and $\bar{X}_{s}$ be the derivative of
the vector field $\bar{X}$ with respect to the parameter $s$. The same convention applies to $\bar{Y}_{t}$, $\bar{Y}_{s}$. The notations $\bar{X}(Y) = \frac{\partial}{\partial t}\bar{Y} = \bar{Y}_{t}$ will always
represent derivatives with respect to the vector field $\bar{X}$, while $\bar{X}\bar{Y}$ or $\bar{X}\times \bar{Y}$ will represent the product of the functions $\bar{X}$ and $\bar{Y}$. Through the section this double character of
the vectors tangent to the manifold $\mathcal{N}$ which are also functions will show up in all statements and proofs.

\begin{theorem} \label{curvature1}
Let $A$ be a normalized potential, let $X, Y \in T_{A} \mathcal{N}$ be a pair of orthonormal vector fields, and let $S: (-\epsilon, \epsilon)\times (-\delta, \delta) \longrightarrow \mathcal{N}$ be the  local surface defined in the previous subsection with $S(0,0)= A$, $\bar{X} $, whose coordinate vector fields are $\bar{X}$, $\bar{Y}$, with $\bar{X}(A) =X$, $\bar{Y}(A) = Y$. Then the sectional curvature $K(X,Y)$ at $A$ of the plane generated by $X, Y$ is given by the expression
$$
 K(X,Y)=  \parallel \nabla_{\bar{Y}}\bar{X} \parallel^{2} - \langle \nabla_{\bar{X}}\bar{X}, \nabla_{\bar{Y}}\bar{Y} \rangle
$$

\end{theorem}

We shall subdivide the proof into several steps.

\begin{lemma} \label{1}
We have that $\bar{X}_{s} = \bar{Y}_{t}$ in the local surface $S$.
\end{lemma}

This is a straightforward consequence of the fact that the vector fields $\bar{X}, \bar{Y}$ commute.

Next, let us evaluate the terms of the sectional curvature in Lemma \ref{Sectional-cur},

\begin{eqnarray*}
K(X,Y) & = &  -\frac{1}{2}(\bar{X}(\bar{X}(\parallel \bar{Y} \parallel^{2})) +  \bar{Y}(\bar{Y}(\parallel \bar{X} \parallel^{2}))  ) +  \parallel \nabla_{\bar{Y}}\bar{X} \parallel^{2} \\
& + &\bar{Y}(\bar{X}\langle \bar{X}, \bar{Y} \rangle ) - \langle \nabla_{\bar{X}}\bar{X} , \nabla_{\bar{Y}}\bar{Y} \rangle .
\end{eqnarray*}

\begin{lemma} \label{step1}
At every point $p \in S(t,s)$ we have

\begin{enumerate}
\item $ \bar{X}(\bar{X}(\parallel \bar{Y} \parallel^{2}) )= 2\int \bar{Y}\bar{Y}_{tt}d\mu_{p} - \int \bar{Y}^{2} d\mu_{p} +\int \bar{X}^{2}\bar{Y}^{2} d\mu_{p} .$
\item $ \bar{Y}(\bar{Y}(\parallel \bar{X} \parallel^{2} ))= 2\int \bar{X}\bar{X}_{ss}d\mu_{p} - \int \bar{X}^{2} d\mu_{p} +\int \bar{X}^{2}\bar{Y}^{2} d\mu_{p}  .$
\end{enumerate}
In particular, if $p = A$ we have
\begin{enumerate}
\item $ \bar{X}(\bar{X}(\parallel \bar{Y} \parallel^{2}) )= 2\int \bar{Y}\bar{Y}_{tt}d\mu_{A} -1 +\int \bar{X}^{2}\bar{Y}^{2} d\mu_{A} .$
\item $ \bar{Y}(\bar{Y}(\parallel \bar{X} \parallel^{2} ))= 2\int \bar{X}\bar{X}_{ss}d\mu_{A} - 1 +\int \bar{X}^{2}\bar{Y}^{2} d\mu_{A}  .$
\end{enumerate}

\end{lemma}

\begin{proof}

The expression follows from the application of the Leibniz rule to differentiate $\parallel \bar{Y} \parallel^{2}= \int \bar{Y}^{2} d\mu_{p}$ (we shall omit for convenience the $p$ in the notation of the measure $d\mu_{p}$ ):

\begin{eqnarray*}
\bar{X}(\bar{X}\int \bar{Y}^{2} d\mu)  & = & \bar{X} ( 2\int \bar{Y} \bar{Y}_{t} d\mu + \int \bar{X} \bar{Y}^{2}d\mu) \\
& =& 2 \int (\bar{Y}_{t})^{2} d\mu + 2 \int \bar{Y}\bar{Y}_{tt} d\mu + 2\int \bar{Y} \bar{X} \bar{Y}_{t} d\mu \\
& + & \int \bar{X}_{t} \bar{Y}^{2} d\mu + 2 \int \bar{X}\bar{Y} \bar{Y}_{t} d\mu + \int \bar{X}^{2} \bar{Y}^{2} d\mu \\
& = & 2\int (\bar{Y}_{t})^{2} d\mu  + 2\int \bar{Y}\bar{Y}_{tt}d\mu + 4\int \bar{X}\bar{Y}\bar{Y}_{t} d\mu \\
& + & \int \bar{X}_{t}\bar{Y}^{2} d\mu + \int \bar{X}^{2}\bar{Y}^{2} d\mu .
\end{eqnarray*}
Since by Lemma \ref{derivative-coor} we have that $\bar{X}_{s} = \bar{Y}_{t}=0$, $\bar{X}_{t}= \bar{Y}_{s} = -1$, we get item (1) just by replacing this values in the integral expressions above.

Interchanging $\bar{X}$ and $\bar{Y}$, $ t$ and $s$, in the above formula, we get item (2). At the point $p = A$ we have that $\int \bar{X}^{2} d\mu_{A} = \int \bar{T}^{2} d\mu_{A} = 1$, so replacing these values in the formula we finish the proof of the lemma.

\end{proof}

\begin{lemma} \label{step2}
The expression of $\bar{Y}(\bar{X}\langle \bar{X}, \bar{Y} \rangle ) = \bar{Y}(\bar{X}\int \bar{X}\bar{Y} d\mu_{p}) $ is
$$
 \bar{Y}(\bar{X}\int \bar{X}\bar{Y} d\mu_{p})   =  \int \bar{Y}\bar{X}_{ts} d\mu_{p} + 1 - \int  \bar{Y}^{2} d\mu_{p}
 +  \int \bar{X} \bar{Y}_{ts} d\mu_{p} - \int \bar{X}^{2} d\mu_{p} + \int \bar{X}^{2} \bar{Y}^{2} d\mu_{p}
$$
at every point $p \in S(t,s)$. In particular, at $p=A$ we have
$$
 \bar{Y}(\bar{X}\int \bar{X}\bar{Y} d\mu_{A})   =  \int \bar{Y}\bar{X}_{ts} d\mu_{A}
 +  \int \bar{X} \bar{Y}_{ts} d\mu_{A} - 1 + \int \bar{X}^{2} \bar{Y}^{2} d\mu_{A}
$$
\end{lemma}

\begin{proof}

We apply the Leibniz rule,
\begin{eqnarray*}
\bar{Y}(\bar{X}\int \bar{X}\bar{Y} d\mu) & = & \bar{Y}( \int \bar{X}_{t}\bar{Y} d\mu + \int \bar{X}\bar{Y}_{t} d\mu + \int \bar{X}^{2} \bar{Y} d\mu )\\
& = & \int \bar{X}_{ts} \bar{Y} d\mu + \int \bar{X}_{t} \bar{Y}_{s} d\mu + \int \bar{X}_{t} \bar{Y}^{2} d\mu \\
& + & \int \bar{X}_{s} \bar{Y}_{t} d\mu + \int \bar{X} \bar{Y}_{ts}d\mu + \int \bar{X} \bar{Y}_{t} \bar{Y} d\mu \\
& + & \int \bar{Y}_{s}\bar{X}^{2} d\mu + 2\int \bar{Y} \bar{X} \bar{X}_{s} d\mu + \int \bar{X}^{2}\bar{Y}^{2} d\mu
\end{eqnarray*}
Since by Lemma \ref{1} we have that $\bar{X}_{s} = \bar{Y}_{t}$ we get the following formula just adding the terms in the above formula:
\begin{eqnarray*}
 \bar{Y}(\bar{X}\int \bar{X}\bar{Y} d\mu) &  = & \int \bar{Y}\bar{X}_{ts} d\mu + \int \bar{X}_{t}\bar{Y}_{s} d\mu + \int \bar{X}_{t} \bar{Y}^{2} d\mu + \int (\bar{X}_{s})^{2} d\mu \\
& + & \int \bar{X} \bar{Y}_{ts} d\mu + 3\int \bar{X}\bar{X}_{s}\bar{Y} d\mu + \int \bar{Y}_{s}\bar{X}^{2} d\mu + \int \bar{X}^{2} \bar{Y}^{2} d\mu .
\end{eqnarray*}

By Lemma \ref{derivative-coor} $\bar{X}_{s}= \bar{Y}_{t}=0$, $\bar{X}_{t}= \bar{Y}_{s}=-1$, and replacing these values in the integral expression above we obtain the formula in the statement. Moreover, if $p=A$ we know that $\int \bar{X}^{2} d\mu_{A} = \int ^{2} d\mu_{A} = 1$, as well as $\int \bar{Y}^{2} d\mu_{A} = \int Y^{2} d\mu_{A} = 1$, thus concluding the proof of the Lemma.

\end{proof}

\begin{corollary} \label{step3}
The term $-\frac{1}{2}(\bar{X}(\bar{X}(\parallel \bar{Y} \parallel^{2}) )+  \bar{Y}(\bar{Y}(\parallel \bar{X} \parallel^{2})))  +  \bar{Y}(\bar{X}\langle \bar{X}, \bar{Y} \rangle )$ in the expression of $K(X,Y)$ at the point $A$ vanishes.
\end{corollary}

\begin{proof}

To shorten notation, we shall omit the dependence of $A$ in the expressions. According to Lemma \ref{1}, we have that
\begin{enumerate}
\item $ \int \bar{X}\bar{X}_{ss} d\mu = \int \bar{X} \bar{Y}_{ts}d\mu$.
\item $\int \bar{Y} \bar{X}_{st} d\mu = \int \bar{Y} \bar{Y}_{tt}d\mu$.
\end{enumerate}

Replacing the above equalities in the expressions of Lemmas \ref{step1}, \ref{step2}, and adding the resulting formulae we get Corollary \ref{step3}.

\end{proof}

Theorem \ref{curvature1} follows at once from Corollary \ref{step3}.

\section{Cristoffel coefficients at the expression of $K(X,Y)$}

We denote by $\{ X_{i} \}$, $i \in \mathbb{N}$, a complete orthonormal base of the vector space  $T_{A} \mathcal{N} \subset L^2(\mu)$ (for the Gibbs probability $\mu$ associated to the normalized potential $A$).

The main goal of the section is to obtain the expression for the sectional curvature in Theorem \ref{main}.

Namely, let $A \in \mathcal{N}$ be a point in the manifold of normalized potentials, let $X,Y \in T_{A} \mathcal{N}$ be two orthonormal tangent vectors. Then the expression of the curvature of the plane generated by $X, Y$ is
 \begin{equation} \label{wqr} K(X,Y)  =  \frac{1}{4}[\, \sum_{i=1}^\infty (   \int X \,Y\, X_i \,d \mu)^2 -   \sum_{i=1}^\infty   \int X^2 X_i \,d \mu\,  \,\int Y^2 X_i \,d \mu \,].
 \end{equation}

In Proposition \ref{cov-der}  we will show that the above sum is well defined.

The proof is a direct calculation of the terms $ \parallel \nabla_{\bar{Y}}\bar{X} \parallel^{2}, \langle \nabla_{\bar{X}}\bar{X}, \nabla_{\bar{Y}}\bar{Y} \rangle$ that appear in the expression of the curvature in Theorem \ref{curvature1}. We shall subdivide the calculation in several lemmas.

We follow the notations of the previous section. Let $S(t,s)$ be the local surface given in Section 4 tangent to the plane generated
by the vectors $X, Y$, satisfying $S(0,0) = A$, let $\bar{X}, \bar{Y}$ be the local extensions of the vectors $X, Y$ obtained by projecting by the map $\Pi$ the plane generated by $X, Y$ at $T_{A} \mathcal{N}$ into the tangent space of $\mathcal{N}$.

Let us define local extensions $\bar{X}_{i}$ of the vector fields $X_{i}$ in an analogous way we defined the extensions of $X, Y$: let $S_{k}$ be the plane generated by $X_{1}, X_{2}, ..,X_{k}$ and let us project by $ \Pi$ the tangent space of $S_{k}$ into $T\mathcal{N}$ by the differential of the projection into $\mathcal{N}$.

The terms $ \parallel \nabla_{\bar{Y}}\bar{X} \parallel^{2}, \langle \nabla_{\bar{X}}\bar{X}, \nabla_{\bar{Y}}\bar{Y}\rangle$ involve the Cristoffel symbols of the vector fields $\bar{X}, \bar{Y}$, at the point $A$ we have:
$$ \nabla_{\bar{X}_{k}}\bar{X}_{l} = \sum_{i=1}^{\infty} \Gamma_{kl}^{i}X_{i} $$
where $\Gamma_{kl}^{i}  = \langle \nabla_{\bar{X}_{k}}\bar{X}_{l}, \bar{X}_{i} \rangle $ is the Cristoffel coefficient. We follow
\cite{doCa} for the definitions and basic properties of Cristoffel coefficients.

The coefficient $\Gamma_{ij}^{k}$ can be calculated in terms of the coefficients of the first fundamental form of
the metric at $A$, the inner products $g_{ij} = \langle X_{i}, X_{j} \rangle $ by the following formula:
$$ \Gamma_{kl}^{i} = \frac{1}{2}g^{im}(g_{mk,l} + g_{ml,k} - g_{kl,m}) $$
where $g^{im}$ is the coefficient of the inverse of the first fundamental form of index $im$, $g_{mk,l}$ is the derivative
with respect to $\bar{X}_{l}$ of the coefficient $g_{mk}$, and the above notation is Einstein's convention for the sum on the index $m$.

The expression "inverse of the first fundamental form" requires some explanation since we are dealing with an infinite-dimensional Riemannian manifold. One natural rigorous approach is to evaluate the series $\sum_{i=1}^{\infty} \Gamma_{kl}^{i}X_{i}$ as the limit of its partial sums $\sum_{i=1}^{n} \Gamma_{kl}^{i}X_{i}$, that includes the Cristoffel coefficients in the subspace of $T_{A}\mathcal{N}$ generated by $\{ X_{1},X_{2}, ..,X_{n}\}$. The first fundamental form restricted to this subspace is a $n\times n$ matrix that, under our assumptions, is the identity. Its inverse is of course the identity. This allows us to define all the terms in the partial sum, then we take the limit as $n \rightarrow \infty$ to get the series. We shall prove that the series converges absolutely, so the above procedure provides the expression of $\nabla_{\bar{X}_{k}}\bar{X}_{l}$ as an infinite series.

In particular, since the basis $\{ X_{1},X_{2}, ..,X_{n},..\}$ is orthonormal, the indices in the sum of the expression of $\nabla_{\bar{X}_{k}}\bar{X}_{l}$ according to Einstein's convention just reduce to $ii$,$kk$, $ll$, depending on the case, and $g_{kl} = g^{kl} = \delta_{kl}$. So at the point $A$ we get the formula

$$ \Gamma_{kl}^{i} = \frac{1}{2}(g_{ik,l} + g_{il,k} - g_{kl,i}). $$

\begin{lemma} \label{cristoffel}
The term $g_{ik,l}$ at $A$, for any permutation of the indices, is
$$ g_{ik,l} = \int X_{i}X_{k}X_{l} d\mu. $$
Then,
$$ \nabla_{\bar{X}_{k}}\bar{X}_{l} = \frac{1}{2} \sum_{i=1}^{\infty} (\int X_{i}X_{k}X_{l} d\mu_{A}) X_{i}. $$
\end{lemma}

\begin{proof}

We have that $g_{ik,l} = \bar{X}_{l} \langle \bar{X}_{i}, \bar{X}_{k} \rangle = \bar{X}_{l} \int \bar{X}_{i}\bar{X}_{k} d\mu$.  By the Leibniz rule we have
$$ \bar{X}_{l} \int \bar{X}_{i}\bar{X}_{k} d\mu = \int \frac{\partial}{\partial \bar{X}_{l}} (\bar{X}_{i})\bar{X}_{k} d\mu +
\int  \bar{X}_{i}\frac{\partial}{\partial \bar{X}_{l}}(\bar{X}_{k}) d\mu + \int \bar{X}_{i}\bar{X}_{k} \bar{X}_{l} d\mu $$
where $\frac{\partial}{\partial \bar{X}_{l}} (\bar{X}_{i})$ is the derivative of the vector field $\bar{X}_{i}$ in the direction of $\bar{X}_{l}$.

Notice that Lemma \ref{derivative-coor} extends to the submanifolds $S_{k}$ for every $k \in \mathbb{N}$. So we have
\begin{enumerate}
\item $\frac{\partial}{\partial \bar{X}_{l}} (\bar{X}_{i}) = 0 $ if $l \neq i$,
\item $\frac{\partial}{\partial \bar{X}_{l}} (\bar{X}_{i}) = -1$ if $l = i$.
\end{enumerate}

In both cases, since $\int \bar{X}_{i} d\mu =0$ for every $i$, we get $g_{ik,l} = \int X_{i}X_{k}X_{l} d\mu$ as claimed.

The expression for $\nabla_{\bar{X}_{k}}\bar{X}_{l}$ is  straightforward from this formula.

\end{proof}

\begin{corollary} \label{cov-der}
Let us assume that $X=X_{1}$ and $ Y= X_{2}$ are the first two vectors of the orthonormal base $\{ X_{i} \}$. For the normalized potential  $A= S(0,0)$ we get the following expressions
$$ \nabla_{ \bar{X}_{1}} \bar{X}_{1} = \frac{1}{2} \sum_{i=1}^{\infty} (\int X_{1}^{2}X_{i} d\mu_{A}) X_{i}$$
$$ \nabla_{ \bar{X}_{2}} \bar{X}_{2} = \frac{1}{2} \sum_{i=1}^{\infty}(\int X_{2}^{2}X_{i} d\mu_{A}) X_{i}$$
$$\nabla_{ \bar{X}_{1}} \bar{X}_{2} = \frac{1}{2}\sum_{i=1}^{\infty} (\int X_{1}X_{2}X_{i} d\mu_{A}) X_{i}.$$

Moreover, for  any pair $X,Y  \in T_{A} \mathcal{N}$    the sums
$$\sum_{i=1}^\infty (   \int X \,Y\, X_i \,d \mu)^2\,\,\,\text{and}\,\,\,  \sum_{i=1}^\infty   \int X^2 X_i \,d \mu\,  \,\int Y^2 X_i \,d \mu $$
are both finite.
\end{corollary}

\begin{proof}
We consider an extension of the family $X_r$, $r \in \mathbb{N}$, to all $ L^2(\mu)$ and we get a complete orthonormal base of the vector space  $ L^2(\mu)$, given by  $X_r, Y_s$, $r,s \in \mathbb{N}$. The first three expressions in the statement are straightforward from Lemma \ref{cristoffel}.

Given two elements $X,Y  \in T_{A} \mathcal{N}$ consider $f=X\, Y = \sum_r a_r^f X_r + \sum_s b_s^f Y_s \in  L^2(\mu)$, then,
$$    (\int X \,Y\, X_i \,d \mu)^2 =               |a_i^f|^2. $$

It follows that $\sum_{i=1}^\infty (   \int X \,Y\, X_i \,d \mu)^2  = \sum_{i=1}^\infty |a_i^f|^2\leq \parallel f\parallel^2$ is finite.

Denote $g= X^2  =\sum_r a_r^g X_r + \sum_s b_s^g Y_s      $ and $h=Y^2= \sum_r a_r^h X_r + \sum_s b_s^h Y_s$.
Therefore,
$$\int g\, h \,d \mu= \sum_{i=1}^\infty    a_i^g   a_i^h +  \sum_{j=1}^\infty   b_j^g   b_j^h .   $$
Form this follows that   $\sum_{i=1}^\infty    a_i^g   a_i^h $ converges.
Note that  $ \int X^2 X_i \,d \mu\, = a_i^g $ and  $ \int Y^2 X_i \,d \mu\, = a_i^h.$
Then,
$$  \sum_{i=1}^\infty   \int X^2 X_i \,d \mu\,  \,\int Y^2 X_i \,d \mu     = \sum_{i=1}^\infty    a_i^g   a_i^h  $$
converges.

\end{proof}

\smallskip

Theorem \ref{main} follows from direct calculation applying Corollary \ref{cov-der} to the expression of $K(X,Y)$.

\section{A worked example in the Markov case:  an orthonormal basis for the kernel of the Ruelle operator} \label{worked}

From now on  $M=\{0,1\}^\mathbb{N}$ and we denote by $\mathcal{K}$  the set of stationary Markov probabilities taking values in $\{0,1\}$.

In this section, given a probability $\mu_A\in K$, we will  exhibit an orthonormal basis for the tangent space to $\mathcal{N}$ (the kernel of the Ruelle operator) at $\mu_A$.

Given a finite word $x =(x_1,x_2,...,x_k)\in \{0,1\}^k$, $k \in \mathbb{N}$, we denote by $[x]$ the associated cylinder set in
$M=\{0,1\}^\mathbb{N}$.

Consider an invariant
Markov probability $\mu$ obtained from a  row stochastic matrix $(P_{i,j})_{i,j=0,1}$ and an initial left invariant  vector of probability $\pi=(\pi_0,\pi_1)\in \mathbb{R}^2$.

Given $r \in (0,1)$ and $s\in (0,1)$ we denote
 \begin{equation} \label{tororo2} P= \left(
\begin{array}{cc}
P_{0,0} & P_{0,1}\\
P_{1,0} & P_{1,1}
\end{array}\right)=   \left(
\begin{array}{cc}
r & 1-r\\
1-s  & s
\end{array}\right) .
\end{equation}

In this way $(r,s)\in (0,1) \times (0,1)$ parameterize all {\bf row} stochastic matrices.

The explicit expression is
\begin{equation} \label{ppp} \mu [x_1,x_2,..,x_n] = \pi_{x_1} \,P_{x_1,x_2}\,P_{x_2,x_3}\,...\, P_{x_{n-1},x_n}.
\end{equation}

\begin{definition}
 Denote by $J:\{0,1\}^\mathbb{N} \to \mathbb{R}$ the   Jacobian associated to $P$. This function $J$ is  such that
is constant equal
$$J_{i,j}=\frac{\pi_i \,P_{i,j}}{\pi_j}$$
on the cylinder $[i,j]$, $i,j=0,1$.
\end{definition}

 According to our previous notation $\mu_A=\mu_{\log J}$ (which in this section will be called just $\mu$).

\begin{definition} The Ruelle operator for $\log J$ acts on continuous functions $\varphi$ and  is given by: for each $\varphi:M \to \mathbb{R}$, we get that
\begin{equation} \label{RuRui} \op{L}_{\log J} (\varphi) (x_1,x_2,x_3...)= \frac{\pi_0 \,P_{0,x_1}}{\pi_{x_1}} \varphi(0,x_1,x_2,...)+ \frac{\pi_1 \,P_{1,x_1}}{\pi_{x_1}} \varphi(1,x_1,x_2,...).
\end{equation}
\end{definition}

It is known that $\op{L}_{\log J}^* (\mu)=\mu.$ (see \cite{PP})

We  also consider the action of $\op{L}_{\log J}$ on $L^2(\mu)$ and we are interested in the kernel of this operator when acting on Holder functions.

%\section{A formula for the curvature of Markov measures}

%In this direction, for each $a\in \{0,1\}$,  denote by $\overline{\alpha}$ the function $a \to \overline{\alpha}(a)$, such that, $\overline{\alpha}(0)=[0]=1$ and $\overline{\alpha}(1)=[1]=0.$
%\subsection{$[y]$ shorter than $[x]$ and $[z]$}

%Note that for $x,z$ fixed  the values $\beta_{x,z}( a_1,b_1,a_2,b_2)$ are uniformly bounded.

%\begin{figure}[h]
 % \centering
 %\includegraphics[height=20cm, scale=5.5]{Fig-b-121212.pdf}
 % \caption{Graph of $ \int X^2 \, X_i \,d \mu $  as a function of $(r,s)$ % in the  case $X=\hat{a}_x= \hat{a}_{[0101]} $ and $  % %X_i=\hat{a}_y=\hat{a}_{[010101]}$.}
 %\label{fig:RR2}
% \end{figure}

%\begin{figure}[h]
%  \centering
% \includegraphics[height=20cm, scale=5.5]{Fig-a-121212.pdf}
%  \caption{Graph of $ \int X^2 \, X_i \,d \mu   \, \int Y^2 \, X_i \,d %\mu$  as a function of $(r,s)$ in the  case $X=\hat{a}_z= \hat{a}_{[01]} %$, $X=\hat{a}_x=\hat{a}_{[0101]} $ and $ %X_i=\hat{a}_y=\hat{a}_{[010101]}$.}
% \label{fig:RR}
% \end{figure}

\medskip

 Given a  finite word $x=(x_1,x_2,...,x_n)$, depending of the context $[x]$ will either denote  the word or  the corresponding  cylinder set  in $ \{0,1\}^\mathbb{N}.$  The empty word is also considered a finite word.

We start by recalling that, given a Markov probability $\mu$ on  $\{0,1\}^\mathbb{N}$, the family of Holder functions
\begin{align}
   e_{[x]} =\frac{1}{\sqrt{\mu([x])}} \sqrt{\frac{P_{x_n,1}}{P_{x_n,0}\,}} \, \mathfrak{1}_{[x0]} - \frac{1}{\sqrt{\mu([x])}} \sqrt{\frac{P_{x_n,0}}{P_{x_n,1} }} \, \mathfrak{1}_{[x1]},
    \label{eq52}
    \end{align}
where $x=(x_1,x_2,...,x_n)$ is a finite word on the symbols $\{0,1\}$, is an orthonormal set for $\op{L}^2 (\mu)$ (see \cite{KS} for a general expression  and \cite{CHLS}  for the specific expression we are using here). In order to get a (Haar) basis  we should add
$e_{[\emptyset]}^0=\frac{1}{ \sqrt{\mu ([0])}}  \mathfrak{1}_{[0]}$ and $ e_{[\emptyset]}^1=\frac{1}{ \sqrt{\mu ([1])}}  \mathfrak{1}_{[1]}$ to this family.

\smallskip
\begin{definition} Given a finite word $x=(x_1,x_2,...,x_n)$, we denote
\begin{equation}  \label{luc1} a_x=\frac{ \sqrt{\pi_{x_1}  }} {\sqrt{\pi_{0}}\sqrt{P_{0,x_1} }} \,\, e_{[0,x_1, x_2,..,x_n]} - \,\frac{ \sqrt{\pi_{x_1}  }} {\sqrt{\pi_{1}}\sqrt{P_{1,x_1} }}\,\,e_{[1,x_1, x_2,..,x_n]},
\end{equation}

\end{definition}

 It will follow from \eqref{vade1} and \eqref{vade2} that the terms   $| a_x |$ are uniformly bounded away from zero (the minimum value is $2$). Moreover, they depend just on the first letter  of the word $[x]$.

\begin{definition}
We denote by
\begin{equation} \label{pede} \hat{a}_x=\frac{1}{|a_x|}\, a_x,
\end{equation}
the normalization of $a_x$.
\end{definition}
\smallskip

In order to get a complete orthonomal set for the kernel of the Ruelle operator we will have to add to the functions of the form \eqref{pede} two more functions: $\hat{a}_{[\emptyset]}^0$ and $ \hat{a}_{[\emptyset]}^0$
to be set in Definition \ref{ttw}. To show this result  is our main goal in this section. This family will be later denoted by $\mathcal{F}$ according to Definition \ref{YT}.

In this direction, we first consider the problem of exhibiting an orthogonal family which is a basis for the kernel of the Ruelle operator, and later via normalization, we will get a complete orthonormal family  which is a basis for the kernel  of the Ruelle operator.

Following this line of  reasoning, one of our main tasks in this section is to show  the following:

\begin{theorem} \label{ort-basis}
The  family $ a_x$,
indexed by all  words $x=(x_1,x_2,...,x_n)$, plus the two functions $e_{[\emptyset]}^0$ and $e_{[\emptyset]}^1$, determine an orthogonal set on  the kernel of the Ruelle operator $\op{L}_{\log J}$.
\end{theorem}

We will address first the issue related to the functions $a_x$, and later to
questions regarding the functions $e_{[\emptyset]}^0$ and $e_{[\emptyset]}^1$.

First note that as the family $e_{[x]}$, where $x$ is a finite word, is orthonormal, then, $a_x$, where $x$ is a finite word with size bigger or equal to $1$, is an orthogonal family.

Indeed, it follows from the fact that the family $e_{[x]}$ defined by (\ref{eq52}) is orthogonal, and the bilinearity of the inner product, that
$$ \langle a_x,a_z \rangle=$$
$$
\langle \frac{ \sqrt{\pi_{x_1}  }} {\sqrt{\pi_{0}P_{0,x_1} }}  e_{[0,x]} - \frac{ \sqrt{\pi_{x_1}  }} {\sqrt{\pi_{1}P_{1,x_1} }}e_{[1,x]},\frac{ \sqrt{\pi_{z_1}  }} {\sqrt{\pi_{0}P_{0,z_1} }}  e_{[0,z]} - \frac{ \sqrt{\pi_{z_1}  }} {\sqrt{\pi_{1}P_{1,z_1} }}e_{[1,z]}\rangle  =0,$$
for all $x=(x_1,x_2,..,x_n)\neq z= (z_1,z_2,...,z_k)$.

\medskip

We shall subdivide the proof of Theorem \ref{ort-basis} into several steps. First of all, we have that:

%Note that indeed $|e_{[x]}|_2=1.$

\begin{proposition} \label{esta}

Given $x=[x_1,x_2,..,x_n]$ with a size larger or equal to $1$,

\begin{equation}  \label{lui}  \op{L}_{\log J} \,( e_{[x_1,x_2,..,x_n]} )  =\frac{\sqrt{\pi_{x_1}}}{ \sqrt{\pi_{x_2}  }} \sqrt{P_{x_1,x_2}} e_{[x_2,x_3,..,x_n]}.
\end{equation}

From this follows that all elements in  the orthogonal family
\begin{equation}  \label{luc} a_x=\frac{ \sqrt{\pi_{x_1}  }} {\sqrt{\pi_{0}}\sqrt{P_{0,x_1} }} \,\, e_{[0,x_1, x_2,..,x_n]} - \,\frac{ \sqrt{\pi_{x_1}  }} {\sqrt{\pi_{1}}\sqrt{P_{1,x_1} }}\,\,e_{[1,x_1, x_2,..,x_n]},
\end{equation}
indexed by words $x=(x_1,x_2,...,x_n)$, are in the kernel of the Ruelle operator $\op{L}_{\log J}$.

\end{proposition}

\begin{proof}
We consider finite words $x$  with size larger or equal to $1$.

 Indeed, given the word  $x=(x_1,x_2,..,x_n)$, let $L = \op{L}_{\log J} \,( e_{[x_1,x_2,..,x_n]} )$, then we get
\begin{eqnarray*}
 L & =& \frac{\pi_{x_1}}{ \pi_{x_2}}\, P_{x_1,x_2}\,[\, \frac{1}{\sqrt{\mu([x])}} \sqrt{\frac{P_{x_n,1}}{P_{x_n,0}\,}}\, \mathfrak{1}_{[x_2,..,x_n,0]} \\
& -&  \frac{1}{\sqrt{\mu([x])}} \sqrt{\frac{P_{x_n,0}}{P_{x_n,1} }} \, \mathfrak{1}_{[x_2,..,x_n,1]}\,] \\
& = & \frac{\pi_{x_1}}{ \pi_{x_2}}\sqrt{P_{x_1,x_2}}\,[\, \frac{\sqrt{P_{x_1,x_2}}}{\sqrt{\mu([x])}} \sqrt{\frac{P_{x_n,1}}{P_{x_n,0}\,}}\, \mathfrak{1}_{[x_2,..,x_n,0]} \\
& - & \frac{\sqrt{P_{x_1,x_2}}}{\sqrt{\mu([x])}} \sqrt{\frac{P_{x_n,0}}{P_{x_n,1} }} \, \mathfrak{1}_{[x_2,..,x_n,1]}\,]
\end{eqnarray*}
This is equal to
$$ \frac{\pi_{x_1}}{ \pi_{x_2}}\frac{\sqrt{\pi_{x_2}}}{ \sqrt{\pi_{x_2}  }} \sqrt{P_{x_1,x_2}}\,\, \frac{\sqrt{P_{x_1,x_2}}}{\sqrt{ \pi_{x_1} \,P_{x_1,x_2}\,P_{x_2,x_3}\,...\, P_{x_{n-1},x_n}}} \sqrt{\frac{P_{x_n,1}}{P_{x_n,0}\,}}\, \mathfrak{1}_{[x_2,..,x_n,0]}$$
$$-\frac{\pi_{x_1}}{ \pi_{x_2}}\frac{\sqrt{\pi_{x_2}}}{ \sqrt{\pi_{x_2} }} \sqrt{P_{x_1,x_2}}\frac{\sqrt{P_{x_1,x_2}}}{\sqrt{ \pi_{x_1} \,P_{x_1,x_2}\,P_{x_2,x_3}\,...\, P_{x_{n-1},x_n}}} \sqrt{\frac{P_{x_n,0}}{P_{x_n,1} }} \, \mathfrak{1}_{[x_2,..,x_n,1]}\,$$
which is equivalent to
$$\frac{\pi_{x_1}}{ \pi_{x_2}} \frac{\sqrt{\pi_{x_2}}}{ \sqrt{\pi_{x_1}  }} \sqrt{P_{x_1,x_2}}\,\, \frac{1}{\sqrt{ \pi_{x_2} \,P_{x_2,x_3}\,...\, P_{x_{n-1},x_n}}} \sqrt{\frac{P_{x_n,1}}{P_{x_n,0}\,}}\, \mathfrak{1}_{[x_2,..,x_n,0]}$$
$$-\frac{\pi_{x_1}}{ \pi_{x_2}}\frac{\sqrt{\pi_{x_2}}}{ \sqrt{\pi_{x_1}  }} \sqrt{P_{x_1,x_2}} \frac{1}{\sqrt{ \pi_{x_2} \,\,P_{x_2,x_3}\,...\, P_{x_{n-1},x_n}}} \sqrt{\frac{P_{x_n,0}}{P_{x_n,1} }} \, \mathfrak{1}_{[x_2,..,x_n,1]}\,$$
that yields
\begin{eqnarray*}
L& = &  \frac{\sqrt{\pi_{x_1}}}{ \sqrt{\pi_{x_2}  }} \sqrt{P_{x_1,x_2}} \frac{1}{ \sqrt{ \mu [x_2,x_3,...,x_n]}}  [ \sqrt{\frac{P_{x_n,1}}{P_{x_n,0}\,}}\, \mathfrak{1}_{[x_2,..,x_n,0]}  -  \sqrt{\frac{P_{x_n,0}}{P_{x_n,1} }} \, \mathfrak{1}_{[x_2,..,x_n,1]} ]\\
& = & \frac{\sqrt{\pi_{x_1}}}{ \sqrt{\pi_{x_2}  }} \sqrt{P_{x_1,x_2}}\,\, e_{x_2,x_3,..,x_n}.
\end{eqnarray*}

Then,
$$  \op{L}_{ \log J} (\,\frac{ \sqrt{\pi_{x_2}  }} {\sqrt{\pi_{x_1}}\sqrt{P_{x_1,x_2} }}  \, e_{[x_1,x_2,..,x_n]} )  $$
$$=\frac{1}{ \sqrt{ \mu [x_2,x_3,...,x_n]}}[ \sqrt{\frac{P_{x_n,1}}{P_{x_n,0}\,}}\, \mathfrak{1}_{[x_2,..,x_n,0]} - \sqrt{\frac{P_{x_n,0}}{P_{x_n,1} }} \, \mathfrak{1}_{[x_2,..,x_n,1]} ]= e_{ [x_2,x_3,..,x_n]}$$

and therefore,
$$  \op{L}_{ \log J} (\,\frac{ \sqrt{\pi_{x_2}  }} {\sqrt{\pi_{0}}\sqrt{P_{0,x_2} }}  \, e_{[0,x_2,..,x_n]} )  =\op{L}_{ \log J} (\,\frac{ \sqrt{\pi_{x_2}  }} {\sqrt{\pi_{1}}\sqrt{P_{1,x_2} }}  \, e_{[1,x_2,..,x_n]} ) .$$

For each finite word $(x_1,x_2,..,x_n)$ denote
\begin{eqnarray*}
 a_x & =& \frac{ \sqrt{\pi_{x_1}  }} {\sqrt{\pi_{0}}\sqrt{P_{0,x_1} }} \,\, e_{[0,x_1, x_2,..,x_n]} - \,\frac{ \sqrt{\pi_{x_1}  }} {\sqrt{\pi_{1}}\sqrt{P_{1,x_1} }}\,\,e_{[1,x_1, x_2,..,x_n]}\\
& = &  \frac{ \sqrt{\pi_{x_1}  }} {\sqrt{\pi_{0}}\sqrt{P_{0,x_1} }} \,   \frac{1}{\sqrt{\mu([0 x])}}\,[ \sqrt{\frac{P_{x_n,1}}{P_{x_n,0}\,}} \, \mathfrak{1}_{[0x0]} -  \sqrt{\frac{P_{x_n,0}}{P_{x_n,1} }} \, \mathfrak{1}_{[0x1]} ]
\end{eqnarray*}
 \begin{equation} \label{tororo15}  -\,\frac{ \sqrt{\pi_{x_1}  }} {\sqrt{\pi_{1}}\sqrt{P_{1,x_1} }} \,\frac{1}{\sqrt{\mu([1 x])}} \,[\,\sqrt{\frac{P_{x_n,1}}{P_{x_n,0}\,}} \, \mathfrak{1}_{[1x0]} -  \sqrt{\frac{P_{x_n,0}}{P_{x_n,1} }} \, \mathfrak{1}_{[1x1]}  ]. \end{equation}

From the above reasoning, it follows that the family $a_x$ is in the kernel of the Ruelle operator.
\end{proof}

For words, $x$ of size greater or equal to $1$ the function $a_x$ is constant in cylinder sets of size equal to the length of $x$ plus 2.

As an example, we get that
\begin{eqnarray*}
 a_0 & =&\frac{ \sqrt{\pi_{0}  }} {\sqrt{\pi_{0}}\sqrt{P_{0,0} }} \,   \frac{1}{\sqrt{\mu([0 0])}}\,[ \sqrt{\frac{P_{0,1}}{P_{0,0}\,}} \, \mathfrak{1}_{[000]} -  \sqrt{\frac{P_{0,0}}{P_{0,1} }} \, \mathfrak{1}_{[001]} ]
\end{eqnarray*}
 \begin{equation} \label{tororo1514}  -\,\frac{ \sqrt{\pi_{0}  }} {\sqrt{\pi_{1}}\sqrt{P_{1,0} }} \,\frac{1}{\sqrt{\mu([1 0])}} \,[\,\sqrt{\frac{P_{0,1}}{P_{0,0}\,}} \, \mathfrak{1}_{[100]} -  \sqrt{\frac{P_{0,0}}{P_{0,1} }} \, \mathfrak{1}_{[101]}  ] \end{equation}
is constant on cylinders of size $3.$

Note that if $x$ and $z$ are different words, then, $1x$, $0x$, $0z$ and $1z$ are four different words.

\medskip

Note that
\begin{align}
    e_{[x]}^2  =\frac{1}{\mu([x])} \frac{P_{x_n,1}}{P_{x_n,0}\,} \, \mathfrak{1}_{[x0]} + \frac{1}{\mu([x])} \frac{P_{x_n,0}}{P_{x_n,1} } \, \mathfrak{1}_{[x1]}.
    \label{eq524}
    \end{align}

Therefore,
\begin{eqnarray*} \label{lucato}
 a_x^2=a_x \, a_x & = & \frac{\pi_{x_1} } {\pi_{0} P_{0,x_1} } \,\, e_{[0,x_1, x_2,..,x_n]}^2  + \,\frac{\pi_{x_1}  } {\pi_{1}P_{1,x_1} }\,\,e_{[1,x_1, x_2,..,x_n]}^2 \\
& = &  [ \,  \frac{\pi_{x_1}  } {\pi_{0} P_{0,x_1} } \frac{1}{\mu([0 x])} \frac{P_{x_n,1}}{P_{x_n,0}\,} \mathfrak{1}_{[0x0]}  +   \,\frac{\pi_{x_1}  } {\pi_{0} P_{0,x_1} } \frac{1}{\mu([0 x])} \frac{P_{x_n,0}}{P_{x_n,1}\,}  \mathfrak{1}_{[0x1]}\,]
\end{eqnarray*}
\begin{equation}  \label{lucato} +[\, \,\frac{\pi_{x_1} } {\pi_{1} P_{1,x_1} } \frac{1}{\mu([1 x])} \frac{P_{x_n,1}}{P_{x_n,0}\,} \, \mathfrak{1}_{[1x0]}
\,+ \frac{\pi_{x_1}  } {\pi_{1} P_{1,x_1} } \frac{1}{\mu([1 x])} \frac{P_{x_n,0}}{P_{x_n,1}\,} \, \mathfrak{1}_{[1x1]}\,] .
\end{equation}

From the above it follows that
\begin{equation} \label{estanao} | a_x |=\sqrt{\frac{\pi_{x_1} } {\pi_{0} P_{0,x_1} } +   \frac{\pi_{x_1}  } {\pi_{1}P_{1,x_1} } }.
\end{equation}

Using the notation in the variables  $r,s$ for the matrix $P$,  when $x_1=0$ we get
\begin{equation} \label{vade1}|a_x|= \sqrt{\frac{\pi_{x_1} } {\pi_{0} P_{0,x_1} } +   \frac{\pi_{x_1}  } {\pi_{1}P_{1,x_1} } }=(\sqrt{ r\, (1-r)})^{-1}
\end{equation}
and when $x_1=1$ we get
\begin{equation} \label{vade2}|a_x|= \sqrt{\frac{\pi_{x_1} } {\pi_{0} P_{0,x_1} } +   \frac{\pi_{x_1}  } {\pi_{1}P_{1,x_1} } }=(\sqrt{ s\, (1-s)})^{-1}.
\end{equation}

\smallskip

\begin{definition}
We denote by $\tilde{\mathcal{F}}$ the orthonormal set of normalized functions $ \hat{a}_x$,
where $x= (x_1,x_2,...,x_k)$ is a finite word with size equal or larger than $1$.
\end{definition}

As we mentioned before, we will  have to add two more functions in order to get a basis (a completely orthogonal set in the Hilbert space) for the kernel of the Ruelle operator $\op{L}_{\log J}$.

We claim that the orthogonal pair (constant in cylinders of size $2$)
 $$ V_1 = \pi_1 P_{1,0} \mathfrak{1}_{[00]} \,\, -  \,\,   \pi_0 P_{0,0} \mathfrak{1}_{[10]}$$
 \begin{equation} \label{tororo}
V_2 = \pi_0 P_{0,1} \mathfrak{1}_{[11]} \,\, -  \,\,   \pi_1 P_{1,1} \mathfrak{1}_{[01]}\end{equation}
is in the kernel of the Ruelle operator  (see Proposition \ref{yyu}). The functions $V_1$ and $V_2$ are orthogonal to all $\hat{a}_x \in \tilde{\mathcal{F}}$ and they depend on the first two coordinates $x_1,x_2$ of $x$.

The vectors $\hat{V}_1 =  \frac{1}{|V_1|  }$ and $\hat{V}_2 =  \frac{1}{|V_2|  }$ are normalized and orthogonal to all $\hat{a}_x$.
This claim will be proved in Proposition \ref{yyu}.

\medskip
One can  show that
\begin{equation} \label{V1}|V_1| = \sqrt{      \pi_1^2 P_{1,0}^2 \pi_0 P_{0,0}            +   \pi_0^2 P_{0,0}^2 \, \pi_1 P_{1,0} }= \sqrt{ \frac{(1-r) r (s-1)^3}{(-2 + r + s)^3}  }
\end{equation}
and

%= \frac{(s-1)^{3/2} \sqrt{r (1-r)} }{(2-r-s)^{3/2} }

\begin{equation} \label{V2}|V_2|= \sqrt{      \pi_0^2 P_{0,1}^2 \pi_1 P_{1,1}            +   \pi_1^2 P_{1,1}^2 \, \pi_0 P_{0,1} } = \sqrt{ \frac{(1-s) s (r-1)^3}{(-2 + r + s)^3}  }.
\end{equation}

\medskip

\begin{definition} \label{ttw}
 As a matter of notation we denote $\hat{a}_{[\emptyset]}^0= \hat{V_1}$ and
$\hat{a}_{[\emptyset]}^1= \hat{V_2}$.
\end{definition}
\medskip

These two functions are constant in cylinders of size $2$

\begin{definition} \label{YT}
We  add $\hat{a}_{[\emptyset]}^0$ and $\hat{a}_{[\emptyset]}^1$ to the family
$\tilde{\mathcal{F}}$ in   order to get the family $\mathcal{F}$.

\end{definition}
\medskip

{\bf  \begin{remark} \label{luit} The elements  in $\mathcal{F}$ range in all possible words of size larger or equal to zero.    A generic element in $\mathcal{F}$ is denoted by $\hat{a}_x$, and by this we mean that $\hat{a}_x$ can eventually represent  $\hat{a}_{[\emptyset]}^0$ or  $\hat{a}_{[\emptyset]}^1.$
\end{remark}

}

\medskip

\begin{proposition} \label{yyu}
The orthogonal  pair

 $$ V_1 = \pi_1 P_{1,0} \mathfrak{1}_{[00]} \,\, -  \,\,   \pi_0 P_{0,0} \mathfrak{1}_{[10]}$$
 \begin{equation} \label{tororo}
V_2 = \pi_0 P_{0,1} \mathfrak{1}_{[11]} \,\, -  \,\,   \pi_1 P_{1,1} \mathfrak{1}_{[01]}\end{equation}
is such that, each one  of them is orthogonal to the other elements $\hat{a}_x$, where $x$ ranges in  all finite words with size bigger or equal to $1$. $V_1$ and $V_2$ are on the kernel of the Ruelle operator $\op{L}_{\log J}$.

\end{proposition}

\begin{proof}

Note first that $\mathfrak{1}_{[00]} $ is orthogonal to all $a_x$, where $x=(x_1,x_2,...,x_n)$ is a word with size equal or greater then $1$.  This claim  follows from \eqref{tororo15}.  Indeed, if $x_1=0$, we get that
$$\langle \mathfrak{1}_{[00]} ,  \sqrt{\frac{P_{x_n,1}}{P_{x_n,0}\,}} \, \mathfrak{1}_{[0x0]} -  \sqrt{\frac{P_{x_n,0}}{P_{x_n,1} }} \, \mathfrak{1}_{[0x1]}\rangle= $$
$$ \sqrt{P_{x_n,1}}\,  \pi_0 \,  P_{0,x_1 }\, P_{x_1,x_2 } \,...P_{x_{n-1},x_n } \sqrt{ P_{x_n,0 }}  \,\,-$$
$$  \sqrt{P_{x_n,0}}\,  \pi_0 \,  P_{0,x_1 }\, P_{x_1,x_2 } \,...P_{x_{n-1},x_n } \sqrt{ P_{x_n,1 }}=0. $$

If $x_1=1$ the claim follows at once.

Using the same reasoning one can show that $\mathfrak{1}_{[01]},\mathfrak{1}_{[10]},\mathfrak{1}_{[11]} $ are orthogonal to all $a_x$, where length of $x$ is bigger than zero. It follows that linear combinations of this functions are also orthogonal to all $a_x$.
It follows that $V_1$ and $V_2$ are orthogonal to all $a_x$, where the length of $x$ is bigger than zero.

We will show that
 $V_1$ is in the kernel of the Ruelle operator  (for $V_2$ the proof is similar).
  Given $y=(y_1,y_2,...,y_n,..)\in M$, suppose first that $y_1=0$, then,  we get
$$\op{L}_{\log J} (V_1)= \op{L}_{\log J}  ( \pi_1 P_{1,0} \mathfrak{1}_{[00]} \,\, -  \,\,   \pi_0 P_{0,0} \mathfrak{1}_{[10]}) (y)= $$
$$  \pi_1 P_{1,0} ( J_{0,y_1} \mathfrak{1}_{[00]}(0,y_1,y_2,...) + J_{1,y_1} \mathfrak{1}_{[00]} (1,y_1,y_2,...) ) -  $$
$$  \pi_0 P_{0,0} ( J_{0,y_1} \mathfrak{1}_{[10]}(0,y_1,y_2,...) + J_{1,y_1} \mathfrak{1}_{[10]} (1,y_1,y_2,...) ) =  $$
$$ \pi_1 P_{1,0}  J_{0,0} -  \pi_0 P_{0,0}  J_{1,0}= \pi_1 P_{1,0} \frac{\pi_0 P_{0,0} }{\pi_0} -   \pi_0 P_{0,0}\frac{\pi_1 P_{1,0} }{\pi_0} =0.$$

In the case $y_1=1$, we get
$$\op{L}_{\log J} (V_1)=  \pi_1 P_{1,0} ( J_{0,y_1} \mathfrak{1}_{[00]}(0,y_1,y_2,...) + J_{1,y_1} \mathfrak{1}_{[00]} (1,y_1,y_2,...) ) -  $$
$$  \pi_0 P_{0,0} ( J_{0,y_1} \mathfrak{1}_{[10]}(0,y_1,y_2,...) + J_{1,y_1} \mathfrak{1}_{[10]} (1,y_1,y_2,...) ) = 0. $$

\end{proof}

\begin{remark} \label{jaque}
 A  function of the form $w=r_1  \mathfrak{1}_{[0]} + r_2\,
\mathfrak{1}_{[1]}$ is in the kernel of $\op{L}_{\log J}$ only in the case where $P_{01}=(1-r)= s=P_{11}$. In this case
\begin{equation} \label{w} w =(1-r) \mathfrak{1}_{[0]} - (1-s)\,
\mathfrak{1}_{[1]}
\end{equation}
is such that
$\op{L}_{\log J} (w)=0.$

We do not have to take into account in our future reasoning this function because
$$w = \frac{1}{r} V_1 + \frac{1}{r-1} V_2.$$

\end{remark}

\begin{proposition} \label{erer} The family of elements in $ \mathcal{F}$ (see Definition \ref{YT} and Remark \ref{luit}) is an orthonormal basis for the kernel of the Ruelle operator
$\op{L}_{\log J}$.
\end{proposition}

\begin{proof}

From Proposition \ref{esta} we know that given $x=[x_1,x_2,..,x_n]$
\begin{equation} \label{muq} \op{L}_{\log J} \,( e_{[x_1,x_2,..,x_n]} )  =\frac{\sqrt{\pi_{x_1}}}{ \sqrt{\pi_{x_2}  }} \sqrt{P_{x_1,x_2}} e_{[x_2,x_3,..,x_n]}.
\end{equation}

Suppose $\varphi$ is in the kernel of the Ruelle operator. We will show that $\varphi$ can be expressed as an infinite linear combination of the normalized functions $\hat{a}_x \in \mathcal{F}$.

We can express $\varphi$ as
$$\varphi = \sum_{\text{words}\,\,y}\, c_y  e_{[y]}  . $$

When applying $\op{L}_{\log J}$ on $\varphi$ we separate the infinite sum in subsums of the form
$$ \,c_{0,\alpha_2,..,\alpha_n } e_{[0,\alpha_2,..,\alpha_n]}    +   c_{1,\alpha_2,..,\alpha_n } e_{[1,\alpha_2,..,\alpha_n]}.$$

 Assuming that $\varphi$ is in the kernel of $\op{L}_{\log J}$, we get from (\ref{muq}) that
\begin{eqnarray*}
0 & = &  \op{L}_{\log J}(\, \sum_n  \,\sum_{\alpha_2,..,\alpha_n} [\,c_{0,\alpha_2,..,\alpha_n } e_{[0,\alpha_2,..,\alpha_n]}    +   c_{1,\alpha_2,..,\alpha_n } e_{[1,\alpha_2,..,\alpha_n]}\,]\,) \\
& = & \, \sum_n\, \sum_{\alpha_2,..,\alpha_n} [ \frac{\sqrt{\pi_{0}}}{ \sqrt{\pi_{\alpha_2}  }} \sqrt{P_{0,\alpha_2}}  \,c_{0,\alpha_2,..,\alpha_n } e_{[\alpha_2,..,\alpha_n]}    + \frac{\sqrt{\pi_{1}}}{ \sqrt{\pi_{\alpha_2}  }} \sqrt{P_{1,\alpha_2}}  c_{1,\alpha_2,..,\alpha_n } e_{[\alpha_2,..,\alpha_n]}\,]\, \\
& = & \, \sum_n\, \sum_{\alpha_2,..,\alpha_n} [ \frac{\sqrt{\pi_{0}}}{ \sqrt{\pi_{\alpha_2}  }} \sqrt{P_{0,\alpha_2}}  \,c_{0,\alpha_2,..,\alpha_n }   + \frac{\sqrt{\pi_{1}}}{ \sqrt{\pi_{\alpha_2}  }} \sqrt{P_{1,\alpha_2}}  c_{1,\alpha_2,..,\alpha_n } ]\,\,\,e_{[\alpha_2,..,\alpha_n]}\,\,.
\end{eqnarray*}

Then, for fixed $n$ and $(\alpha_2,\alpha_3,..,\alpha_n)$
$$ \frac{\sqrt{\pi_{0}}}{ \sqrt{\pi_{\alpha_2}  }} \sqrt{P_{0,\alpha_2}}  \,c_{0,\alpha_2,..,\alpha_n }  = - \frac{\sqrt{\pi_{1}}}{ \sqrt{\pi_{\alpha_2}  }} \sqrt{P_{1,\alpha_2}}  c_{1,\alpha_2,..,\alpha_n } ,$$
which means
$$   \,c_{0,\alpha_2,..,\alpha_n }  = - \frac{\sqrt{\pi_{1}}}{ \sqrt{\pi_{\alpha_2}  }} \sqrt{P_{1,\alpha_2}} \frac{\sqrt{\pi_{\alpha_2}}}{ \sqrt{\pi_{0}  }} \frac{1}{\sqrt{P_{0,\alpha_2}}}  c_{1,\alpha_2,..,\alpha_n } .$$

Then, the sum $$c_{0,\alpha_2,..,\alpha_n } e_{[0,\alpha_2,..,\alpha_n]}  +   c_{1,\alpha_2,..,\alpha_n } e_{[1,\alpha_2,..,\alpha_n]} $$
is equal to
$$  -\,    c_{1,\alpha_2,..,\alpha_n }\,[  \frac{\sqrt{\pi_{1}}}{ \sqrt{\pi_{\alpha_2}  }} \sqrt{P_{1,\alpha_2}} \frac{\sqrt{\pi_{\alpha_2}}}{ \sqrt{\pi_{0}  }} \frac{1}{\sqrt{P_{0,\alpha_2}}}              e_{[0,\alpha_2,..,\alpha_n]}  - e_{[1,\alpha_2,..,\alpha_n]}].
$$

Multiplying the above expression by $\frac{\sqrt{\pi_{\alpha_2}}}{ \sqrt{\pi_{1}  }} \frac{1}{\sqrt{P_{1,\alpha_2}}}       $ we get
$$ \frac{\sqrt{\pi_{\alpha_2}}}{ \sqrt{\pi_{1}  }} \frac{1}{\sqrt{P_{1,\alpha_2}}}    \,[\,c_{0,\alpha_2,..,\alpha_n } e_{[0,\alpha_2,..,\alpha_n]}    +   c_{1,\alpha_2,..,\alpha_n } e_{[1,\alpha_2,..,\alpha_n]}]$$
 which is equal to
\begin{eqnarray*}
  & - & \,    c_{1,\alpha_2,..,\alpha_n }\,[  \frac{\sqrt{\pi_{\alpha_2}}}{ \sqrt{\pi_{0}  }} \frac{1}{\sqrt{P_{0,\alpha_2}}}        e_{[0,\alpha_2,..,\alpha_n]}  - \frac{\sqrt{\pi_{\alpha_2}}}{ \sqrt{\pi_{1}  }} \frac{1}{\sqrt{P_{1,\alpha_2}}}     e_{[1,\alpha_2,..,\alpha_n]}]\\
& = &  -\,    c_{1,\alpha_2,..,\alpha_n }\,a_{[a_2,..,a_n]}.
\end{eqnarray*}

Then, $(\,c_{0,\alpha_2,..,\alpha_n } e_{[0,\alpha_2,..,\alpha_n]}    +   c_{1,\alpha_2,..,\alpha_n } e_{[1,\alpha_2,..,\alpha_n]}\,)$ is a multiple of the function $\hat{a}_{[\alpha_2,..,\alpha_n]}.$
Since the above reasoning was done for a generic choice of $(\alpha_2,\alpha_3,..,\alpha_n),$  we conclude that for each $n$ the sum
$\sum_{\text{words}\,\,y\, \text{of length}\, n}\, c_y  e_{[y]}$ can be expressed as a linear combination of elements $\hat{a}_x$, using words of length $n-1$, $n>1$.

From this follows that each element in the kernel of $\op{L}_{\log J} $ can be expressed as an infinite linear combination of the functions $\hat{a}_x$.

\end{proof}

Theorem \ref{ort-basis} follows from the combination of Propositions \ref{esta} and \ref{erer}

\medskip

The above shows that the set $\mathcal{F}$ is a complete orthonormal set  for the kernel of the Ruelle operator acting on $\op{L}^2(\mu).$

\medskip

\medskip

\section{A worked example in the Markov case: preliminary calculations of the terms in $K(X,Y)$} \label{eet}

In this section, we shall devote ourselves to the calculation of the sectional curvatures in the case of Markov stationary probabilities on $M=\{0,1\}^\mathbb{N}$.

We denote by $K\subset \mathcal{N},$ the set of  Markov invariant probabilities.
We will consider this section  the sectional curvature for points in $\mathcal{K}$ for general orthogonal  pairs of tangent vectors to $\mathcal{N}.$

We can also consider $\mathcal{K}$ as a two-dimensional manifold  carrying the Riemannian structure induced by $\mathcal{N}.$ From this point of view, there exists  just one orthonormal pair to be considered.
One of our main results (see Theorem \ref{meme}) claims that for the two dimensional manifold  $K,$   for any point in $\mathcal{K}$, the sectional curvature for the pair of tangent vectors to $\mathcal{K}$  is always zero.

\medskip

 We will consider in our reasoning  the empty word as a regular word.
$\hat{a}_\emptyset^0 $ and $\hat{a}_\emptyset^1 $ are two elements in $\mathcal{F}$ associated to the empty word.

\begin{definition} We say that $z$ is a  subprefix of $x$, if $x$ and $z$ satisfy
$$  [x]=[x_1,x_2,...x_k,x_{k+1},..., x_n] \subset [z]=[x_1,x_2,...,x_k],$$
where $n \geq k$.
\end{definition}

Note that, even when $z$ is not a subprefix of $x$ and $x$ is not a subprefix of $z$, they can share some common subprefix. Note also that if $x$ and $z$ do not share a common subprefix, then $z$ is not a subprefix of $x$ and $x$ is not a subprefix of $z$.

If $[x]=[z]$, then, $x$ is a subprefix of $z.$

\begin{definition} We say that $z$ is a  strict subprefix of $x$, if $x$ and $z$ satisfy
$$  [x]=[x_1,x_2,...x_k,x_{k+1},..., x_n] \subset [z]=[x_1,x_2,...,x_k],$$
where $n > k$.
\end{definition}

Two different  words with the same length can not be subprefix of each other. If the length of $z$ is strictly larger than the length of $x$, then,  $z$ can not be a subprefix of $x$.

\begin{definition} Given the finite words $x,z$ we denote by $D[x,z]$ the set of all finite words $y$ such that are  subprefix of $x$ and $z$.
\end{definition}

If for example   $ x=(0,0,0)$ and $z=(0,0,0,1)$, then
$$D[x,z]=\{\hat{a}_\emptyset^0,(0), (0,0),(0,0,0)\}.$$
In the case
$x=(0,1,0,0,1) $ and $z=(0,1,1)$ we get that $D[x,z]= \{\hat{a}_\emptyset^0(0),(0,1),\}.$

Another example:  $D[a_{0,0}, \hat{a}_\emptyset^0]=\{\hat{a}_\emptyset^0 \}$ and  $D[a_{0,0}, \hat{a}_\emptyset^1]=\emptyset.$
\smallskip

Note that in the case $z=(z_1,z_2,..,z_k)$ is a subprefix of $x=(x_1,x_2,..,x_n)$, $n>k$, then, $z_1=x_1$.  Then, it follows from \eqref{estanao} that $|a_x|=|a_z|$.

\begin{proposition} \label{louc1} Assume that $x$ is not a subprefix of $z$ and  $z$ is not a subprefix of $x$. Then,
$$a_x \, a_z=0.$$

\end{proposition}

\begin{proof}

Note that $a_z$ is a linear combination of  $\mathfrak{1}_{[0z0]}, \mathfrak{1}_{[0z1]}, \mathfrak{1}_{[1z0]}$ and $\mathfrak{1}_{[1z1]}$. As $a_x$ is a linear combination of  $\mathfrak{1}_{[0x0]}, \mathfrak{1}_{[0x1]}, \mathfrak{1}_{[1x0]}$ and $\mathfrak{1}_{[1x1]}$ the result follows.

\end{proof}

Note that the hypothesis of the last proposition is equivalent to saying that the
cylinders $[x]$ and $[z]$ are disjoint.

\begin{corollary} \label{louc} Given a word $x$ assume that $x$ is not a subprefix of $y$ and $y$ is not a subprefix of $x$. Then,
$$\hat{a}_x^2 \, \hat{a}_y=0.$$

\end{corollary}

\begin{proof} This follows from at once from Proposition \ref{louc1}.
\end{proof}

Note that if $x$ and $y$ have the same length, but they are different, then $\int \hat{a}_x^2 \,\hat{a}_y \, d \mu=0.$
\medskip

From Proposition \ref{louc1} it follows:

\begin{corollary}   \label{Gla1} Assume that $x$ is not a subprefix of $z$ and  $z$ is not a subprefix of $x$.
Then, we get that the products (part of the first sum contribution in (\ref{tororo172})) satisfy
\begin{equation}
\int \hat{a}_x \,\hat{a}_z \, \hat{a}_y d \mu=0,
\end{equation}
for all word $y$.
\end{corollary}

Remember that $\mathcal{F}$ (defined in last section) is the set of all  functions of the form
\begin{equation} \label{tororo66} \hat{a}_x = \frac{1}{ \sqrt{\frac{\pi_{x_1} } {\pi_{0} P_{0,x_1} } +   \frac{\pi_{x_1}  } {\pi_{1}P_{1,x_1} } }}  a_x,\end{equation}
where $x= (x_1,x_2,...,x_k)$ is a general finite word,
plus the functions $\hat{a}_{[\emptyset]}^0 $ and $\hat{a}_{[\emptyset]}^1$.
\medskip

Remember that  Proposition \ref{erer} of last section  claims that
the family of functions $\mathcal{F}$
 determine  an  orthonormal basis  for the Kernel  of the Ruelle operator.

\smallskip

We want to estimate for $X=\hat{a}_x,$ $Y= \hat{a}_z \in \mathcal{F}$ and the orthogonal basis $X_i= \hat{a}_y\in \mathcal{F}$   the explicit expression of the curvature which was described in Theorem \ref{main}

%$$K= \frac{1}{4} [\,- ( \int X^3 d \mu \,\,\,  \int X Y^2  \,d \mu +  \int Y^3 d \mu \,\,\,  \int Y  X^2 \,d \mu)  \,+\, (\int X Y^2  \,d \mu)^2 +
 % (\int Y  X^2 \,d \mu)^2\, ] $$

 \begin{equation} \label{tororo1}K(X,Y)  = \frac{1}{4}[\, \sum_{i=1}^\infty (   \int X \,Y\, X_i \,d \mu)^2 -   \sum_{i=1}^\infty   \int X^2 X_i \,d \mu\,  \,\int Y^2 X_i \,d \mu \,] .\end{equation}

\bigskip

 We will not present the explicit   expression of the sectional curvature $K(X,Y)$  for any pair of vectors $X,Y$ in the kernel, but just for the case where the functions $X,Y$ are part of the family
 $\hat{a}_x\in \mathcal{F}$.
\smallskip

An important issue is: $0=\langle  \hat{a}_z^2, \hat{a}_y\rangle = \int \hat{a}_z^2 \hat{a}_y \,d \mu$,   when the  length of $y$ is strictly  larger  than the  length of $z$ (as will be proved  in Sections  \ref{xxy} and \ref{xyz}). We mention this point to stress the point that the last sum in expression \eqref{tororo172} is a  sum of a finite number  of terms.

\smallskip

Our main result in this section concerns the Markov case:

\begin{theorem} \label{memo}
For a  fixed pair $\hat{a}_x, \hat{a}_z\in \mathcal{F}$ (with $z$ different from $x$)  the value
 \begin{equation} \label{tororo172} K(\hat{a}_z,\hat{a}_x)  = \frac{1}{4}[\, \sum_{\text{word} \, y}(   \int \hat{a}_x \,
 \hat{a}_z\, \hat{a}_y \,d \mu)^2 -   \sum_{\text{word} \, y}   \int \hat{a}_z^2 \hat{a}_y \,d \mu\,  \,\int \hat{a}_x^2 \hat{a}_y \,d \mu \,].\end{equation}
In the case   the length of $x$ is strictly  larger  than the  length of $z$ we get  that \eqref{tororo172}  can be expressed in a more simplified form as:
 \begin{equation} \label{tororo182} \frac{1}{4}   [  (\int \hat{a}_x^2 \, \hat{a}_z \,d \mu)^2  -   \sum_{y\,\in\,D[x,z]}   \int \hat{a}_x^2 \hat{a}_y \,d \mu\,  \,\int \hat{a}_z^2 \hat{a}_y \,d \mu].  \end{equation}

In this case the above  expression is a  sum of a finite number  of terms.

In the general case the value $\int \hat{a}_x^2 \hat{a}_z d \mu$ is  zero  if $z$ is not a subprefix of $x$.  If $z$ is a strict subprefix of $x$ and $y$ is a strict subprefix of $z$, then  the term
 \begin{equation} \label{ae11}-    \int \hat{a}_x^2 \hat{a}_y \,d \mu\,  \,\int \hat{a}_z^2 \hat{a}_y \,d \mu
 \end{equation}
 is non positive. Moreover, by \eqref{KLGT}  we get $\int \hat{a}_z^2 \, \hat{a}_x \,d \mu=0$.  Then, it
 follows that  \eqref{tororo172}  is a  sum of a finite number  of terms,
 for any given $x$ and $z$, with $z \neq x$.
\end{theorem}

The proof of this result will take several  sections and subsections.
Proposition \ref{pop24} will will summarize several  explicit computations that are necessary in our reasoning.

We will also provide an explicit expression for the curvature (\ref{tororo182}) in terms of the words $x,z$ and the probability $\mu$ (which is indexed by
$(r,s)$ of expression \eqref{tororo2}). This will follow from explicit expressions for   $(\int a_x \,a_z\, a_y \,d \mu)^2 $,  $ \int a_z^2 a_y \,d \mu\,$ and  $\,\int a_x^2 a_y \,d \mu$, for all  finite words $x,z,y$, that will be presented the Propositions \ref{pop23} and \ref{pop24}  (which will be proved in sections \ref{xxy} and \ref{xyz}).

It will also follow that when $x$ and $z$ do not share a common subprefix $y$, then the curvature $K(\hat{a}_z,\hat{a}_x)$  is equal to $0$ (see Proposition \ref{bor}).

There are examples (for instance, the case  $ x=(0,1,0)$ and $z=(0,1,0,0)$) where the curvature $K(\hat{a}_z,\hat{a}_x) $ is positive for some values of the parameters $(r,s)$ and negative for others (see Example \ref{bor1}). We can show from the explicit expressions we obtain that for fixed values of the parameters $(r,s)$ the curvature
$K(\hat{a}_z,\hat{a}_x)$ can be very negative if both words $x,z$ have large lengths and share common subprefix with large length (see Remark \ref{krat}).
In Example \ref{bor2} we  show that $K( \hat{a}_{(0)}, \hat{a}_{(0,0)} )= -0.205714...$, when $r=0.1,s=0.3$.  In Proposition \ref{zeze} we show the curvature $K(\hat{a}_{[\emptyset]}^0 ,\hat{a}_0)$  can be  positive for  some pairs $r,s\in(0,1)$. It follows from the expressions of Proposition \ref{pop24} that all sectional curvatures   $K( \hat{a}_{z}, \hat{a}_{x} )$ are equal to $-1/2$,  when $r=1/2=s$, the size of $z$ is bigger than $1$ and $z$ is a strict subprefix of $x$.
See also Proposition \ref{zeze}, when $r=1/2=s$, for the computation of
$ K(\hat{a}_{[\emptyset]}^0,\hat{a}_0)=1/2$.

%Therefore,

%$$K(XY) = 1+ \frac{1}{4}\, [  |X \,Y|^2_2 -  <\, X^2\,,\, Y^2\,> ] = 1 +  \frac{1}{4}\, [(\int X^2\, Y^2 d \mu)^2 - \int \, X^2\,\, Y^2 d \mu].$$

%{\bf ( see expression ({\ref{olhar}))}

%Anyway, if $\int \, X^2\,\, Y^2 d \mu>1$,  then the curvature is larger than 1.}

\begin{remark} \label{retr}
Expression (\ref{tororo109}) in Subsection \ref{ylong1}  shows that in the case the
length of $x$ is larger than the length of $z$, then  $(\int \hat{a}_z^2 \, \hat{a}_x \,d \mu)^2=0.$
\end{remark}

\begin{proposition} \label{pop23} Assume that  the
length of $x$ is larger than the length of $z$. The first sum on expression (\ref{tororo172}) is given by
  \begin{equation} \label{gagatr11} \sum_{\text{word} \, y\,}(   \int \hat{a}_x \,\hat{a}_z\, \hat{a}_y \,d \mu)^2 = (\int \hat{a}_x^2 \, \hat{a}_z \,d \mu)^2 + (\int \hat{a}_z^2 \, \hat{a}_x \,d \mu)^2=  (\int \hat{a}_x^2 \, \hat{a}_z \,d \mu)^2.
 \end{equation}

\end{proposition}

For a proof of this claim see  expression (\ref{gagatr}) in section \ref{xyz}.  This term in the sum  (\ref{tororo172}) is the part that contributes to the curvature to be more positive. The second term in the  sum   (\ref{tororo172}) will contribute to the curvature becoming more negative  (see proposition \ref{pop24}).

Note that  \eqref{gagatr11} does not depend on $y$. Note also that from expression (\ref{ppp}) one can get explicitly  the values \eqref{gagatr11}  as a function of $(r,s)$.

In Proposition  \ref{louc1} we show that if $x$ is not a subprefix of $z$ and  $z$ is not a subprefix of $x$, we get that
$\int \hat{a}_x^2 \,\hat{a}_z \, d \mu=0$. In this case the contribution of (\ref{gagatr11}) for the curvature will be null.

\smallskip

\begin{proposition} \label{bor}
 When $z$ and  $x$ do not share common  subprefix
the curvature
$$K(\hat{a}_{z}, \hat{a}_{x})=0.$$

\end{proposition}
\begin{proof}

 When $z$ and  $x$ do not share a common  subprefix, it follows that $x$ is not a subprefix of $z$ and $z$ is not a subprefix of $x$.

We will show  that in this case $K(\hat{a}_z,\hat{a}_x)=0$.
Indeed, from Proposition \ref{louc1}  we get that  $(\int \hat{a}_x^2 \, \hat{a}_z \,d \mu)^2 + (\int \hat{a}_z^2 \, \hat{a}_x \,d \mu)^2=0.$
Fix the words $z$, $x$ and  consider  a variable word $y$. In order to estimate the second sum in expression  (\ref{tororo182})  we have to consider all different possible words $y$ such that are subprefix of $x$ and $z$. But there is no such kind of $y$.

Therefore, $K(\hat{a}_z,\hat{a}_x)=0$.

See also Proposition \ref{zeze}, when $r=1/2=s$, for the computation of
other sectional curvatures.
\end{proof}

\smallskip

\begin{remark} \label{rte}
It follows from Remark \ref{retr} that
 $ \sum_{\text{word} \, y}   \int \hat{a}_z^2 \hat{a}_y \,d \mu\,  \,\int \hat{a}_x^2 \hat{a}_y \,d \mu \,$
 is a sum of a finite number of terms; because when estimating $\int \hat{a}_z^2 \hat{a}_y \,d \mu\,  \int \hat{a}_x^2 \hat{a}_y \,d \mu$ we do not have to take into account words $y$ with length strictly larger than the minimum of the  lengths
 of $x$ and $z$.  It also follows from Proposition \ref{bor}   that if $x$ is not a subprefix of $y$ and  $y$ is not a subprefix of $x$, we get that $ \int a_x^2 \,a_y \, d \mu=0.$

Note that the above makes clear that in expression \eqref{tororo172}, the second sum has nonzero terms only when $y \in D[x,z]$. This justifies the simplified expression \eqref{tororo182}.

 \smallskip

 With all this in mind, in order to have explicit expressions, the next proposition deals just with the words $y$ with lengths smaller than or equal to the length of a given word  $x$.
\end{remark}

\begin{proposition}   \label{pop24} Assume that the length of $x$ is larger or equal  to the length of $y$. Then we have:
\bigskip

 a) $\int \hat{a}_x^2 \, \hat{a}_y d \mu=0$, if $y$ is not a subprefix of $x$. This also includes the case where $x\neq y$ and  length of $x$ is  equal  to the length of $y$.
\bigskip

b.0) Assume  that $[x]=[x_1,x_2,...x_k,x_{k+1},..., x_n] \subset [y]=[x_1,x_2,...,x_k]$, where $n > k$, and $x_{k+1}=0$.  Note that from \eqref{estanao} we get that $|a_x| =|a_y|$.  Then,

$$ \int \hat{a}_x^2 \hat{a}_y d \mu  =$$
$$  \frac{1}{|a_y|^3} \frac{\sqrt{P_{x_k,1}}}{ \sqrt{P_{x_k,0}} }\{ ( \frac{\pi_{x_1}  } {\pi_{0} P_{0,x_1} })^{3/2}   \frac{1}{\sqrt{\mu([0 y])}} -\, ( \frac{\pi_{x_1}  } {\pi_{1} P_{1,x_1} } )^{3/2}   \frac{1}{\sqrt{\mu([1 y])}}  \} =$$
 \begin{equation} \label{tororo9} \frac{1}{|a_x|^3} \sqrt{P_{x_k,1}}\{ ( \frac{\pi_{x_1}  } {\pi_{0} P_{0,x_1} })^{3/2}  \frac{1}{\sqrt{\mu([0 y 0])}} -\, ( \frac{\pi_{x_1}  } {\pi_{1} P_{1,x_1} } )^{3/2}  \frac{1}{\sqrt{\mu([1 y 0])}}  \}  .
\end{equation}

\smallskip

b.1) Assume,  that $[x]=[x_1,x_2,...x_k,x_{k+1},..., x_n] \subset [y]=[x_1,x_2,...,x_k]$, where $n > k$, and $x_{k+1}=1$. Then,
\bigskip

$$ \int \hat{a}_x^2 \hat{a}_y d \mu  = $$
$$\frac{1}{|a_y|^3} \,\frac{\sqrt{P_{x_k,0}}}{ \sqrt{P_{x_k,1}} }\{  -  ( \frac{\pi_{x_1}  } {\pi_{0} P_{0,x_1} })^{3/2}
\frac{1}{\sqrt{\mu([0 y])}}   +( \frac{\pi_{x_1}  } {\pi_{1} P_{1,x_1} } )^{3/2}   \frac{1}{\sqrt{\mu([1 y ])}}\,\}=  $$
\begin{equation} \label{tororo91}\frac{1}{|a_x|^3} \,\sqrt{P_{x_k,0}}\{  -  ( \frac{\pi_{x_1}  } {\pi_{0} P_{0,x_1} })^{3/2}
\frac{1}{\sqrt{\mu([0 y 1])}}   +( \frac{\pi_{x_1}  } {\pi_{1} P_{1,x_1} } )^{3/2}   \frac{1}{\sqrt{\mu([1 y 1])}}
    \}\end{equation}

%In the same way as before, if
 %$z$ and $x$ are subprefixes of $y$, then, the product  $\int \hat{a}_x^2 \hat{a}_y d \mu  \int \hat{a}_z^2 \hat{a}_y d \mu$ is non negative for any choice of $(r,s).$

b.2)  Assume, $[x]=[x_1,x_2,...x_n] = [y].$

Then,

$$ \int \hat{a}_x^2 \hat{a}_y d \mu= \int \hat{a}_y^3 d \mu=$$

 \begin{equation} \label{tororo1491}    \frac{1}{|a_x|^3} \{( \frac{\pi_{x_1}  } {\pi_{0} P_{0,x_1} })^{3/2} [  \frac{P_{x_n,1}^{3/2}}{\sqrt{\mu([0 y 0])}} \,  -   \frac{P_{x_n,0}^{3/2}}{\sqrt{\mu([0 y 1])}}  ]  -(\frac{\pi_{x_1}  } {\pi_{1} P_{1,x_1} } )^{3/2}  [    \frac{P_{x_n,1}^{3/2}}{\sqrt{\mu([1 y 0])}}    -
 \frac{P_{x_n,0}^{3/2}}{\sqrt{\mu([1 y 1])}}  ]\} .
\end{equation}

b3)  If $x_1=0$, then

\begin{equation} \label{rret} \int \hat{a}_x^2\hat{a}_{[\emptyset]}^0 d\mu=\, \frac{1}{|a_x|^2\, | V_1|  }( \frac{ \pi_1\, P_{1,0} } {P_{0,0} }  \,\, \,
\,+    \, \frac{\pi_0^2\, P_{0,0}  } {\pi_1 P_{1,0} })=\,\,
\end{equation}
$$ \frac{ (s-1) (1- 2 r + 2 r^2) }{\sqrt{ \frac{(1-r) r (s-1)^3}{(-2 + r + s)^3}  } \,(-2 + r +s)}>0,$$
and
$$\int \hat{a}_x^2\hat{a}_{[\emptyset]}^1 d\mu=0.$$

When $r=1/2=s$ we get that for any word $x$ (with size bigger or equal to $1$), such that, $x_1=0$
\begin{equation} \label{rret1}
\int \hat{a}_x^2\hat{a}_{[\emptyset]}^0 d\mu = \sqrt{2}.
\end{equation}

\smallskip

For the  proof of this proposition  see sections \ref{ylong2} and \ref{saco1}.
\end{proposition}

 \smallskip

 \begin{remark} \label{krat1}
\noindent

\begin{itemize}
\item We point out that \eqref{tororo9} and  \eqref{tororo91} do not depend on
$x_{k+2},...,x_{n-1},x_n$.

\item  If $y\in D[x,z]-\{ z\}$, then, the product  $\int \hat{a}_x^2 \hat{a}_y d \mu  \int \hat{a}_z^2 \hat{a}_y d \mu$ is non negative for any choice of $(r,s)$
(the product will not depend on $x$ and $z$). This follows from the expressions in b.0) and b.1). This shows \eqref{ae11}.

\item The term $\int \hat{a}_x^2 \hat{a}_z d \mu  \, \int \hat{a}_z^2 \hat{a}_z d \mu$ may be sometimes negative.

\end{itemize}

 \end{remark}

 \medskip

We previously denoted by $\mathcal{K}$  the two dimensional manifold of  Markov invariant probabilities (the set of equilibrium probabilities  for potentials depending on two coordinates and parametrized by $r,s$)

 Given the Markov invariant probability $\mu$ associated to the parameters $r,s$, the set of  vectors which are tangent to $\mathcal{K}$ at this point is the set of functions that depend on two coordinates $(x_1,x_2)$. The ones that are
 on the kernel of $\op{L}_{\log J}$ are  $\hat{a}_{[\emptyset]}^0 $
 and  $\hat{a}_{[\emptyset]}^1.$

 \medskip

 \begin{theorem} \label{meme}

Given the two-dimensional manifold of  Markov invariant probabilities   $M,$   for any point in $\mathcal{K}$ the sectional curvature for the pair of tangent vectors to $\mathcal{K}$  is always zero.

 \end{theorem}

 \begin{proof}

 Remember that $ V_1 = \pi_1 P_{1,0} \mathfrak{1}_{[00]} \,\, -  \,\,   \pi_0 P_{0,0} \mathfrak{1}_{[10]}$ and
$
V_2 = \pi_0 P_{0,1} \mathfrak{1}_{[11]} \,\, -  \,\,   \pi_1 P_{1,1} \mathfrak{1}_{[01]}$
determine an orthogonal basis for the tangent space to $\mathcal{K}$ at $\mu$.

 We claim that the curvature $K(\hat{a}_\emptyset^0,\hat{a}_\emptyset^1)=  0$.

Indeed, take $X_i= \hat{a}_z$, for some finite word $z=(x_1,x_2,...,x_k)$. If we assume that $x_1=1$, then,

\begin{eqnarray*}
\, V_1^2\, a_z & =& [ \, \pi_1^2 P_{1,0}^2\, \, (\frac{\pi_{x_1}  } {\pi_{0} P_{0,x_1} } \frac{1}{\mu([0 z])} \frac{P_{x_k,1}}{P_{x_k,0}\,})^{1/2} \mathfrak{1}_{[0z0]}\, \mathfrak{1}_{[00]} \\
& - &   \pi_1^2 P_{1,0}^2\, \, \,(\frac{\pi_{x_1}  } {\pi_{0} P_{0,x_1} } \frac{1}{\mu([0 z])} \frac{P_{x_k,0}}{P_{x_k,1}\,})^{1/2}  \mathfrak{1}_{[0z1]}\,\mathfrak{1}_{[00]}] \\
& - & [\pi_0^2 P_{0,0}^2\, \, \, \,(\frac{\pi_{x_1}} {\pi_{1} P_{1,x_1} } \frac{1}{\mu([1 z])} \frac{P_{x_k,1}}{P_{x_k,0}\,})^{1/2} \, \mathfrak{1}_{[1z0]} \mathfrak{1}_{[10]}\\
& -&  \pi_0^2 P_{0,0}^2\, \,  \,(\frac{\pi_{x_1}  } {\pi_{1} P_{1,x_1} } \frac{1}{\mu([1 z])} \frac{P_{x_k,0}}{P_{x_k,1}\,} )^{1/2} \, \mathfrak{1}_{[1z1]}\mathfrak{1}_{[00]}\,]\,=0.
\end{eqnarray*}

Above we use the fact that $\mathfrak{1}_{[0 1 x_2...x_k 0]}\, \mathfrak{1}_{[00]}=0$, etc.

Therefore, it follows that:
$$\int\, \hat{V}_1^2\, \hat{a}_y\, d \mu  \,\int \hat{V}_2^2\, \hat{a}_y\,\, d \mu =0.$$

 If we assume that $x_1=0$, then, in a similar way $\int \hat{a}_z\, V_2^2\, d \mu =0$, and therefore,
 $\int \hat{a}_z\, \hat{V}_1^2\, d \mu  \,\int \hat{a}_z\, \hat{V}_2^2\, d \mu =0.$

 Note that $V_1 V_2=0.$

Then,   for any word $y$ we get $\int \hat{V}_1\,\hat{V}_2\, \hat{a}_y \, d \mu=0.$ In the same way
$\int \hat{V}_1^2\,\hat{V}_2\,  \, d \mu=0$ and $\int \hat{V}_2^2\,\hat{V}_1\,  \, d \mu=0$.

Finally, we get,

  \begin{equation} \label{tororo18233} \frac{1}{4}   [  (\int (\hat{a}_\emptyset^0)^2\,\hat{a}_\emptyset^1 \,d \mu)^2  -   \sum_{y}   (\int (\hat{a}_\emptyset^0)^2 \hat{a}_y \,d \mu\,)  \,(\int (\hat{a}_\emptyset^1)^2 \hat{a}_y \,d \mu)] =0.  \end{equation}

 \end{proof}
\bigskip

\begin{remark} \label{erre} Recall that the expression of the Gauss sectional curvature $K_{M}(X,Y)$ of an isometric immersion $(M,g_{M})$, submanifold of the Riemannian manifold $(N,g)$, at the plane generated by two orthogonal vector fields $X, Y$ tangent to $\mathcal{K}$, is given by
$$ K_{M} (X,Y) = K(X,Y) + \langle \nabla^{\perp}_{X}X, \nabla^{\perp}_{Y}Y \rangle - \parallel \nabla^{\perp}_{X}Y \parallel^{2}$$ according to Gauss formula (see for instance \cite{doCa}). Here, the operator $\nabla^{\perp}_{X}Y$ is the component of
the covariant derivative $\nabla_{X}Y$ of the Riemannian manifold $(N,g)$ that is normal to $(M,g_{M})$.

Notice that the sectional curvature
$$  K(X,Y)=  \parallel \nabla_{\bar{Y}}\bar{X} \parallel^{2} - \langle \nabla_{\bar{X}}\bar{X}, \nabla_{\bar{Y}}\bar{Y} \rangle
$$
includes all the terms of the normal component of the covariant derivative of $X,Y$. By Theorem \ref{meme}, all the components of the covariant derivative of a certain pair of orthogonal vector fields tangent to the surface of Markov probabilities vanish. In particular, all the terms of the normal covariant derivative of $X,Y$ vanish. Therefore, Theorem \ref{meme} yields that the Gaussian curvature of the surface of Markov probabilities vanishes, its intrinsic curvature as an isometric immersion of the manifold of normalized potentials
is zero. This is a remarkable fact, which implies for instance that the surface would be totally geodesic in the manifold of normalized
potentials provided that geodesics exist. We won't consider the problem of the existence of geodesics in this article, we shall study this
problem in further papers.
\end{remark}

 \begin{proposition} \label{maxma}

 When $r=0.5$ and $s=0.5$, we get that
 \begin{equation} \label{lli}
 K(\hat{a}_y,\hat{a}_x) =-1/2,
 \end{equation}
 for words $x,y$ with size bigger or equal to $1$
 \end{proposition}
 \begin{proof}

 It follows from the above proposition that due to symmetry,  when $r=0.5$ and $s=0.5$, we get $\int \hat{a}_x^2 \hat{a}_y d \mu  \, \int \hat{a}_z^2 \hat{a}_y d \mu=0$, for words with size bigger or equal to $1$. Moreover, $\int \hat{a}_x^2 \,w d \mu=0$, for  any $a_x$. In this case, if $x_1=0$ and $x$ is a subprefix of $y$, we get that for words with size bigger or equal to $1$
  (see \eqref{rret1}),
\begin{equation}\label{enfii} K(\hat{a}_y,\hat{a}_x)= - 1/4 \,\int \hat{a}_x^2\hat{a}_{[\emptyset]}^0 d\mu\,\int \hat{a}_y^2\hat{a}_{[\emptyset]}^0 d\mu =-1/2.
\end{equation}
\end{proof}

\begin{remark} \label{krat} From the explicit expressions we obtain  (for fixed values of the parameters $(r,s)$) the curvature
$K(\hat{a}_z,\hat{a}_x)$ can be very negative if both words $x,z$ have large lengths and have common subprefix $y$ with large length. Indeed, for fixed $\hat{a}_z,\hat{a}_x$,    as  $\int \hat{a}_x^2 \hat{a}_y d \mu  \int \hat{a}_z^2 \hat{a}_y d \mu$ is non negative for any common word $y$, in the calculus of the curvature  $K(\hat{a}_z,\hat{a}_x)$, we get  a sum of several expressions  $\int \hat{a}_x^2 \hat{a}_y d \mu  \int \hat{a}_z^2 \hat{a}_y d \mu$. Note that $\int \hat{a}_x^2 \hat{a}_y d \mu  \int \hat{a}_z^2 \hat{a}_y d \mu$ does  depend on $y$ (but not on $x$ and $z$). Note also that for fixed $x$ the expression (\ref{tororo9}) can be very large if
the length of $y$ is very large (and, so $\mu([a y b])$, $a,b=0,1$, is very small).
\end{remark}
\smallskip

%Note that $ \frac{1}{ \sqrt{\frac{\pi_{x_1} } {\pi_{0} P_{0,x_1} } +   \frac{\pi_{x_1}  } {\pi_{1}P_{1,x_1} } }}\leq 1/2. $

%We also add the functions (associated to the empty word)
% \begin{equation} \label{tororo3} \hat{a}_{\emptyset}^1 = \frac{\sqrt{\pi_1} }{\sqrt{P_{0,1}} \sqrt{P_{0,0}  }}  e_{[0]} -  \frac{\sqrt{\pi_0} }{\sqrt{P_{1,1}} \sqrt{P_{1,0}  }}  e_{[1]} \end{equation}
%which is constant in  cylinder sets of size equal to $2$.

%\begin{remark} \label{subsub} If the word  $z$ begins with $0$ then $\hat{a}_{[0]}$ is a subprefix of $z$.

%\end{remark}

{\bf

\begin{proposition} \label{zeze} The curvature $K(\hat{a}_{[\emptyset]}^0,\hat{a}_0)=1/2$, when $r=1/2=s$.
\end{proposition}

\begin{proof} Note that
\eqref{tororo182} can be expressed as
$$ K(\hat{a}_{[\emptyset]}^0,\hat{a}_0)=\frac{1}{4}   [  \,(\int \hat{a}_0^2 \,\hat{a}_{[\emptyset]}^0  \,d \mu)^2  - \, \int \hat{a}_0^2 \,\hat{a}_{[\emptyset]}^0  \,d \mu\,\int (\hat{a}_{[\emptyset]}^0)^2\,\hat{a}_{[\emptyset]}^0  \,d \mu) ].$$

For any $r,s$, it is known from \eqref{rret} that
\begin{equation} \label{rret3} \int \hat{a}_0^2\hat{a}_{[\emptyset]}^0 d\mu=\, \frac{1}{|a_0|^2\, | V_1|  }( \frac{ \pi_1\, P_{1,0} } {P_{0,0} }  \,\, \,
\,+    \, \frac{\pi_0^2\, P_{0,0}  } {\pi_1 P_{1,0} })>0.
\end{equation}

Note that
$$
V_1^2\,V_1=$$
 $$
 (\pi_1^2 P_{1,0}^2 \mathfrak{1}_{[00]} \,\, +  \,\,   \pi_0^2 P_{0,0}^2
 \mathfrak{1}_{[10]})\times
 ( \pi_1 P_{1,0} \mathfrak{1}_{[00]} \,\, -  \,\,   \pi_0 P_{0,0} \mathfrak{1}_{[10]})=$$
$$ \pi_1^3 P_{1,0}^3 \mathfrak{1}_{[00]} \,\, -  \,\,   \pi_0^3 P_{0,0}^3
 \mathfrak{1}_{[10]}.$$

Then,
$$ \int V_1^2\,V_1 = \pi_1^3 P_{1,0}^3 \mu([00]) \,\, -  \,\,   \pi_0^3 P_{0,0}^3
 \mu([10]), $$
 which is equal to $ \frac{1}{2}^6 \frac{1}{2}^2 - \frac{1}{2}^6 \frac{1}{2}^2 =0$, in the case $r=1/2=s$.

 Therefore,
 $$ K(\hat{a}_{[\emptyset]}^0,\hat{a}_0)=\frac{1}{4}   (  \int \hat{a}_0^2 \,\hat{a}_{[\emptyset]}^0  \,d \mu)^2 = \frac{1}{4} \sqrt{2}^2=1/2>0.$$

\end{proof}
}

\bigskip

In other examples, we used the software Mathematica for getting explicit computations.

%\begin{proposition} \label{prob} Assume that length of $x$ is strictly smaller than  the length of $y$.
%Then, $\int \hat{a}_x^2 \hat{a}_y d \mu=0$. In this way, if the length of $y$ is strictly larger than the minimum  of the lengths of $x$ and $z$, then,   $\int \hat{a}_x^2 \hat{a}_y d \mu \, \int \hat{a}_z^2 \hat{a}_y d \mu=0$. For this product to be non-zero it is required that  $x$ and $z$ are subwords of $y$.  This also means  that $x$ is a subprefix of $z$ or $z$ is a subprefix of $x$.
%\end{proposition}

\begin{example} \label{bor1}
Consider the case where $ z=(0,1,0)$ and $x=(0,1,0,0).$

$\int \hat{a}_x^2 \hat{a}_y d \mu  \int \hat{a}_z^2 \hat{a}_y d \mu  =0$, unless $\hat{a}_y$ is such that
$$y\in D[(0,1,0),(0,1,0,0)]=\{(0),(0,1),(0,1,0)\},\,\,\text{or}\,\,\hat{a}_y=\hat{a}_{[\emptyset]}^0.$$
Note that    $\int \hat{a}_z^2\,\hat{a}_{[\emptyset]}^1 \,d\, \mu=\int \hat{a}_x^2\,\hat{a}_{[\emptyset]}^1 \,d\, \mu=0.$

%Moreover $\int \hat{a}_x^2 \hat{a}_y d \mu  \int \hat{a}_z^2 \hat{a}_y d \mu  \geq 0$ if $y\in\{(0),(0,0),(0,0,0)\}$ (see b 0), b 1) and b 2) in Proposition \ref{pop24}).

\smallskip

Using Mathematica and the formulas of Proposition \ref{pop24}, we made computations when $r=0.1$ and $s=0.3$. In this case $\pi_0=0.4375$ and $\pi_1=0.5625$ and from \eqref{pede} we get
 $|a_{(0,1,0)}|=|a_{(0,1,0,0)}|=3.33.. $ and $|V_1|= 0.086...$
%Mthen
 Finally, $\frac{1}{|a_x|^2 |a_z|}= \frac{1}{|a_z|^3}=\frac{1}{|a_x|^2 |a_y|}=\frac{1}{|a_z|^2 |a_y|}=0.027... $

 We will show that $K( \hat{a}_{(0,1,0)}, \hat{a}_{(0,1,0,0)} )=35.9142....$

 We get the following values:
 $$\text{using\,} \eqref{tororo1491}\,\,\,\int \hat{a}_z^2 \hat{a}_z d \mu= \frac{1}{|a_z|^3 } \int a_{(0,1,0)}^2 a_{(0,1,0)} d \mu = 107,51...,$$
 %AA3 ....Mark2correct-3-1-22-

 $$\text{using\,} \eqref{tororo9}\,\,\,\int \hat{a}_x^2 \hat{a}_z d \mu = \frac{1}{|a_x|^2 |a_z|}   \int a_{(0,1,0,0)}^2 a_{(0,1,0)} d \mu  =120.949...,$$
 %AB3
 $$     (\int \hat{a}_x^2 \hat{a}_z d \mu)^2 =  (\int \hat{a}_{(0,1,0,0)}^2 \hat{a}_{(0,1,0)} d \mu)^2=  (16.93...)^2 = 14628.7...,$$
%AB3
 $$ \text{using\,} \eqref{tororo9}\,\,\,\int \hat{a}_{(0,1,0,0)}^2 \hat{a}_{(0,1)} d \mu  = 38.2473...,$$
 %AC3
 $$\text{using\,} \eqref{tororo9}\,\,\,\int \hat{a}_{(0,1,0)} ^2 \hat{a}_{(0,1)} d \mu    =38.2473... $$
 %AC3
 $$ \text{using\,} \eqref{tororo91}\,\,\, \int \hat{a}_{(0,1,0,0)}^2 \hat{a}_{(0)} d \mu= -1.34387...$$

 %AD3

 $$\text{using\,} \eqref{tororo91}\,\,\, \int \hat{a}_{(0,1,0)} ^2 \hat{a}_{(0)} d \mu = -1.34387...,$$
 and finally, using \eqref{rret}
 %AD3
$$\int  \hat{a}_{(0,1,0)}^2  \hat{a}_{[\emptyset]}^0 d \mu=\int  \hat{a}_{(0,1,0,0)}^2  \hat{a}_{[\emptyset]}^0 d \mu=\frac{1}{|a_{(0,1,0)}|^2\, |V_1|}\int  a_{(0,1,0)}^2  V_1 d \mu= 4.13241...$$
%G
Using \eqref{rret} and \eqref{V1}  (note that $x_1=0$),  we get that the expression  (\ref{tororo182}) can be written in this case as
\begin{eqnarray*}
K( \hat{a}_{(0,1,0)}, \hat{a}_{(0,1,0,0)} ) & = &\frac{1}{4}   (\int \hat{a}_{(0,1,0,0)}^2 \, \hat{a}_{(0,1,0)} \,d \mu)^2  \\
&  -  &  \frac{1}{4}[ \int \hat{a}_{(0,1,0,0)}^2 \hat{a}_{(0,1,0)} \,d \mu\,  \,\int \hat{a}_{(0,1,0)}^2 \hat{a}_{(0,1,0)} \,d \mu]\\
&  - & \frac{1}{4}[\int \hat{a}_{(0,1,0,0)}^2 \hat{a}_{(0,1)} \,d \mu\,  \,\int \hat{a}_{(0,1,0)}^2 \hat{a}_{(0,1)} \,d \mu ] \\
&  - &  \frac{1}{4}[\int \hat{a}_{(0,1,0,0)}^2 \hat{a}_{(0)} \,d \mu\,  \,\int \hat{a}_{(0,1,0)}^2 \hat{a}_{(0)} \,d \mu      ]\,\\
&-&\frac{1}{4}\,\int  \hat{a}_{(0,1,0,0)}^2  \hat{a}_{[\emptyset]}^0 d \mu \, \int \hat{a}_{(0,1,0)}^2  \hat{a}_{[\emptyset]}^0 d \mu = 35.9142...\\
\end{eqnarray*}

 Taking $r=0.8,s=0.5$  we get $K( \hat{a}_{(0,1,0)}, \hat{a}_{(0,1,0,0)} )=-3.17713...$
 When $r=1/2=s$ we get $K( \hat{a}_{(0,1,0)}, \hat{a}_{(0,1,0,0)} )=-1/2$.

$\,\,\,\,\,\,\,\,\,\,\,\,\,\,\,\,\,\,\,\,\,\,\,\,\,\,\,\,\,\,\,\,\,\,\,\,\,\,\,\,\,\,\,\,\,\,\,\,\,\,\,\,\,\,\,\,\,\,\,\,\,\,\,\,\,\,\,\,\,\,\,\,\,\,\,\,\,\,\,\,\,\,\,\,\,\,\,\,\,\,\,\,\,\,\,\,\,\,\,\,\,\,\,\,\,\,\,\,\,\,\,\,\,\,\,\,\,\,\,\,\,\,\,\,\,\,\,\,\,\,\,\,\,\,\,\,\,\,\,\,\,\,\,\,\,\,\,\,\,\,\,\,\,\,\,\,\,\,\,\,\,\,\,\,\,\,\,\,\,\,\,\,\,\,\,\,\,\,\,\,\,\,\,\,\,\,\,\,\,\,\,\,\,\,\,\,\,\,\,\,\,\,\,\,\diamondsuit$
\end{example}

\begin{example} \label{bor2}
Consider the case where $ z=(0)$ and $x=(0,0).$ Then,
 $D[(0),(0,0)] =\{\hat{a}_0, \hat{a}_{[\emptyset]}^0\}.$
Therefore,
\begin{eqnarray*}
K( \hat{a}_{(0)}, \hat{a}_{(0,0)} ) & = &\frac{1}{4}   (\int \hat{a}_{(0,0)}^2 \, \hat{a}_{(0)} \,d \mu)^2  \\
&  -  &  \frac{1}{4}[ \int \hat{a}_{(0,0)}^2 \hat{a}_{(0)} \,d \mu\,  \,\int \hat{a}_{(0)}^2 \hat{a}_{(0)} \,d \mu]\\
&-&\frac{1}{4}\,\int  \hat{a}_{(0,0)}^2  \hat{a}_{[\emptyset]}^0 d \mu \, \int \hat{a}_{(0)}^2  \hat{a}_{[\emptyset]}^0 d \mu.
\end{eqnarray*}

In this case, using Mathematica, one can show that $K( \hat{a}_{(0)}, \hat{a}_{(0,0)} )\leq 0$, for all values $r,s\in(0,1)$. For $r=0.1$, $s=0.3$, we will show that $K( \hat{a}_{(0)}, \hat{a}_{(0,0)} )= -0.205714...$

When, $r=0.1,s=0.3$, we get
$$ |a_0|=3.333...,
$$
$$|V_1|= 0.086...,$$
$$\int \hat{a}_{(0,0)}^2 \, \hat{a}_{(0)} \,d \mu=\frac{1}{ |a_{(0)}|^3}  $$
%AB2
$$\int \hat{a}_{(0)}^2 \, \hat{a}_{(0)} \,d \mu= \frac{1}{|a_{(0)}|^3\, }$$
%AA2
$$\int  \hat{a}_{(0,0)}^2  \hat{a}_{[\emptyset]}^0 d \mu  = \frac{1}{|a_{(0)}|^2\, |V_1|} \int  a_{(0)}^2  V_1 d \mu = \frac{1}{|a_{(0)}|^2\, |V_1|}\, 3.96.$$
%G

Finally, when $r=0.1$, $s=0.3$ we get $K( \hat{a}_{(0)}, \hat{a}_{(0,0)} )= -0.205714...$

$\,\,\,\,\,\,\,\,\,\,\,\,\,\,\,\,\,\,\,\,\,\,\,\,\,\,\,\,\,\,\,\,\,\,\,\,\,\,\,\,\,\,\,\,\,\,\,\,\,\,\,\,\,\,\,\,\,\,\,\,\,\,\,\,\,\,\,\,\,\,\,\,\,\,\,\,\,\,\,\,\,\,\,\,\,\,\,\,\,\,\,\,\,\,\,\,\,\,\,\,\,\,\,\,\,\,\,\,\,\,\,\,\,\,\,\,\,\,\,\,\,\,\,\,\,\,\,\,\,\,\,\,\,\,\,\,\,\,\,\,\,\,\,\,\,\,\,\,\,\,\,\,\,\,\,\,\,\,\,\,\,\,\,\,\,\,\,\,\,\,\,\,\,\,\,\,\,\,\,\,\,\,\,\,\,\,\,\,\,\,\,\,\,\,\,\,\,\,\,\,\,\,\,\,\diamondsuit$
\end{example}

\section{Computations for the integral  $\int X^2 Y $} \label{xxy}

Our purpose in this section is to evaluate the integral
 \begin{equation} \label{gaga1} \sum_{\text{word} \, y\,}  \int \hat{a}_x^2 \, \hat{a}_y \,d \mu \,\,  \int \hat{a}_z^2 \, \hat{a}_y \,d \mu,
 \end{equation}
for any given pair of words $x,z$. This corresponds to the second term in the sum given by expression (\ref{tororo172}).

We assume that $x$ is different from $z$.

From proposition \ref{louc} if $x$ is not a subprefix of $y$ and $y$ is not a subprefix of $x$, and  $x\neq y$, then:
$$\hat{a}_x^2 \, \hat{a}_y=0.$$

In the same way, if   $z$ is not a subprefix of $y$ and $y$ is not a subprefix of $z$,  and  $z\neq y$, then:
$$\hat{a}_z^2 \, \hat{a}_y=0.$$

If $y$ has the same length as $x$ but $y\neq x$, then $\hat{a}_x^2 \, \hat{a}_y=0.$

In this way, for a fixed  pair of words $x,z$, several words $y$ do not contribute to the  sum \eqref{gaga}.

\subsection{The value of $\langle  \hat{a}_x^2, \hat{a}_y\rangle $ when  length of $x$ is larger or equal  than the  length of $y$} \label{ylong2}

 We want to compute $\langle  \hat{a}_x^2, \hat{a}_y\rangle =\int \hat{a}_x^2 \, \hat{a}_y d \mu$ in the case where  the  length of $x$ is larger or equal to the length of $y$.

 Our computation is in fact for $\langle  a_x^2, a_y\rangle $ and after  that, of course, to get $\langle  \hat{a}_x^2, \hat{a}_y\rangle $ it will be necessary to divide by  $|a_x|^2\, |a_y|.$

We assume that $[x]=[x_1,x_2,...x_k,x_{k+1},..., x_n] \subset [y]=[x_1,x_2,...,x_k]$, where $n \geq k$  (otherwise we get zero).

Note that these assumptions  include the integral $\int \hat{a}_x^3 d \mu$, that is, the case $x=y$ (see III) below).

I) Case $n>k$ -
We will assume first that $x_{k+1}=0$ in the word $[x]$.

Given the words $z=(v_1,...,v_t)$ and $v=(v_1,v_2,...,v_t, v_{t+1},...,v_m)$, assume $v_{t+1}=0$, then, from (\ref{eq52}) and  (\ref{eq524})
$$    e_{[v]}^2 e_{[z]}=[\frac{1}{\mu([v])} \frac{P_{v_m,1}}{P_{v_m,0}\,} \, \mathfrak{1}_{[v_1,...,v_t,0,v_{t+2},...,v_m, 0]} + \frac{1}{\mu([v])} \frac{P_{v_m,0}}{P_{v_m,1} } \, \mathfrak{1}_{[ v_1,...,v_t,0,v_{t+2},...,v_m ,1]}]  $$
$$\times [\frac{1}{\sqrt{\mu([z])}} \sqrt{\frac{P_{v_t,1}}{P_{v_t,0}\,}} \, \mathfrak{1}_{[v_1,...,v_t, 0]} - \frac{1}{\sqrt{\mu([z])}} \sqrt{\frac{P_{v_t,0}}{P_{v_t,1} }} \, \mathfrak{1}_{[v_1,...,v_t , 1]}  ]  $$
$$=(\frac{1}{\mu([v])} \frac{P_{v_m,1}}{P_{v_m,0}\,}) (  \frac{1}{\sqrt{\mu([z])}} \sqrt{\frac{P_{v_t,1}}{P_{v_t,0}\,}} )\, \mathfrak{1}_{[v_1,...,0, v_{t+2},...,v_m,0]} $$
\begin{align}
  +   ( \frac{1}{\mu([v])} \frac{P_{v_m,0}}{P_{v_m,1} }) \,(\frac{1}{\sqrt{\mu([z])}} \sqrt{\frac{P_{v_t,1}}{P_{v_t,0} }} )\, \mathfrak{1}_{[v_1,...,0,v_{t+2},...,v_m, 1]}.
    \label{eq524}
    \end{align}

Note that in the above reasoning when going from the second to the third line the term multiplying $\mathfrak{1}_{[v_1,...,v_t , 1]} $ disappear because we assume that $v_{t+1}=0.$

We are going to apply the above when $z=[0 y], z=[1 y], v=[0 x], v=[1 x],  m=n$ and $t+1=k$.

Then, from (\ref{luc}), (\ref{lucato}), (\ref{eq524}) and using the fact that
$$e_{[0,x_1, x_2,...,x_k,0,x_{k+2},...,x_n]}^2\,\, \,e_{[1,x_1, x_2,..,x_k]}=0,$$
$$e_{[1,x_1, x_2,...,x_k,0,x_{k+2},...,x_n]}^2\,\, \,e_{[0,x_1, x_2,..,x_k]}=0,$$
we get
$$ a_x^2 \,a_y=[\frac{\pi_{x_1} } {\pi_{0} P_{0,x_1} } \,\, e_{[0,x_1, x_2,..,x_k, 0,x_{k+2},...,x_n]}^2 + \,\frac{\pi_{x_1}  } {\pi_{1}P_{1,x_1} }\,\,e_{[1,x_1, x_2,...,x_k,0,x_{k+2},...,x_n]}^2] $$
$$   \times  [\frac{ \sqrt{\pi_{x_1}  }} {\sqrt{\pi_{0}}\sqrt{P_{0,x_1} }} \,\, e_{[0,x_1, x_2,..,x_k]} - \,\frac{ \sqrt{\pi_{x_1}  }} {\sqrt{\pi_{1}}\sqrt{P_{1,x_1} }}\,\,e_{[1,x_1, x_2,..,x_k]}]        $$
$$=  \, ( \frac{\pi_{x_1}  } {\pi_{0} P_{0,x_1} })^{3/2} [ (\frac{1}{\mu([0 x])} \frac{P_{x_n,1}}{P_{x_n,0}\,})   (\frac{1}{\sqrt{\mu([0 y])}} \sqrt{\frac{P_{x_k,1}}{P_{x_k,0}\,}}\, ) \mathfrak{1}_{[0x0]}   $$
$$+  (\frac{1}{\mu([0 x])} \frac{P_{x_n,0}}{P_{x_n,1} }) ( \frac{1}{\sqrt{\mu([0 y])}} \sqrt{\frac{P_{x_k,1}}{P_{x_k,0}\,}}) \, \mathfrak{1}_{[0x1]}\,]
$$
$$-( \frac{\pi_{x_1}  } {\pi_{1} P_{1,x_1} } )^{3/2}  [\,  (\frac{1}{\mu([1 x])} \frac{P_{x_n,1}}{P_{x_n,0}\,} )  ( \frac{1}{\sqrt{\mu([1 y])}} \sqrt{\frac{P_{x_k,1}}{P_{x_k,0}\,}) }\, \mathfrak{1}_{[1x0]} $$
$$ +  (\frac{1}{\mu([1 x])} \frac{P_{x_n,0}}{P_{x_n,1} })\,(\frac{1}{\sqrt{\mu([1 y])}} \sqrt{\frac{P_{x_k,1}}{P_{x_k,0} }}) \mathfrak{1}_{[1x1]}\,] .
$$

Finally, as the matrix $P$ is row stochastic
$$ \int a_x^2 a_y d \mu=   \, ( \frac{\pi_{x_1}  } {\pi_{0} P_{0,x_1} })^{3/2} [  (\frac{P_{x_n,1}}{\sqrt{\mu([0 y])}} \sqrt{\frac{P_{x_k,1}}{P_{x_k,0}\,}}\, ) +   (\frac{P_{x_n,0}}{\sqrt{\mu([0 y])}} \sqrt{\frac{P_{x_k,1}}{P_{x_k,0}\,}}) \, \,]
$$
 $$-( \frac{\pi_{x_1}  } {\pi_{1} P_{1,x_1} } )^{3/2}  [\,   ( \frac{P_{x_n,1}}{\sqrt{\mu([1 y])}} \sqrt{\frac{P_{x_k,1}}{P_{x_k,0}\,}}) \,  +
 \,(\frac{P_{x_n,0}}{\sqrt{\mu([1 y])}} \sqrt{\frac{P_{x_k,1}}{P_{x_k,0} }}) \,]
$$
$$=\, (P_{x_n,1} +P_{x_n,0}  )  \sqrt{P_{x_k,1}}\{\, ( \frac{\pi_{x_1}  } {\pi_{0} P_{0,x_1} })^{3/2} [  \frac{1}{\sqrt{\mu([0 y 0])}}\,  \, \,]
$$
$$ -( \frac{\pi_{x_1}  } {\pi_{1} P_{1,x_1} } )^{3/2}  [\,    \frac{1}{\sqrt{\mu([1 y 0])}}   \,] \,\}=
$$
 \begin{equation} \label{tororo999}\,   \sqrt{P_{x_k,1}}\{\, ( \frac{\pi_{x_1}  } {\pi_{0} P_{0,x_1} })^{3/2}   \frac{1}{\sqrt{\mu([0 y 0])}}\,    \, \, -( \frac{\pi_{x_1}  } {\pi_{1} P_{1,x_1} } )^{3/2}  \,   \frac{1}{\sqrt{\mu([1 y 0])}} \, \,\} .
\end{equation}

%The above expression is valid when $x=z$.

II) Case $n>k$  -
If we assume $x_{k+1}=1$ in the word $[x]$, then  we get in a similar way as before
$$ \int a_x^2 a_y d \mu= $$

\begin{equation} \label{tororo149}\sqrt{P_{x_k,0}}\{  -  ( \frac{\pi_{x_1}  } {\pi_{0} P_{0,x_1} })^{3/2}
\frac{1}{\sqrt{\mu([0 y 1])}}   +( \frac{\pi_{x_1}  } {\pi_{1} P_{1,x_1} } )^{3/2}   \frac{1}{\sqrt{\mu([1 y 1])}}
    \}   \end{equation}

Indeed, given the words $z=(v_1,...,v_t)$ and $v=(v_1,v_2,...,v_t, v_{t+1},...,v_m)$, assume $v_{t+1}=1$, then, from (\ref{eq52}) and  (\ref{eq524})
$$    e_{[v]}^2 e_{[z]}=[\frac{1}{\mu([v])} \frac{P_{v_m,1}}{P_{v_m,0}\,} \, \mathfrak{1}_{[v_1,...,v_t,1,v_{t+2},...,v_m, 0]} + \frac{1}{\mu([v])} \frac{P_{v_m,0}}{P_{v_m,1} } \, \mathfrak{1}_{[ v_1,...,v_t,1,v_{t+2},...,v_m ,1]}]  $$
$$\times [\frac{1}{\sqrt{\mu([z])}} \sqrt{\frac{P_{v_t,1}}{P_{v_t,0}\,}} \, \mathfrak{1}_{[v_1,...,v_t, 0]} - \frac{1}{\sqrt{\mu([z])}} \sqrt{\frac{P_{v_t,0}}{P_{v_t,1} }} \, \mathfrak{1}_{[v_1,...,v_t , 1]}  ]  $$
$$=\,-\,[(\frac{1}{\mu([v])} \frac{P_{v_m,1}}{P_{v_m,0}\,}) (  \frac{1}{\sqrt{\mu([z])}} \sqrt{\frac{P_{v_t,0}}{P_{v_t,1}\,}} )\, \mathfrak{1}_{[v_1,...,1, v_{t+2},...,v_m,0]} \,]$$
\begin{align}
  +   ( \frac{1}{\mu([v])} \frac{P_{v_m,0}}{P_{v_m,1} }) \,(\frac{1}{\sqrt{\mu([z])}} \sqrt{\frac{P_{v_t,0}}{P_{v_t,1} }} )\, \mathfrak{1}_{[v_1,...,1,v_{t+2},...,v_m, 1]}.
    \label{eq52467}
    \end{align}

 We are going to apply the above when $z=[0 y], z=[1 y], v=[0 x], v=[1 x],  m=n$ and $t=k+1$.

Then, from (\ref{luc}), (\ref{lucato}), (\ref{eq52467}) and using the fact that
$$e_{[1,x_1, x_2,...,x_k,1,x_{k+2},...,x_n]}^2\,\, \,e_{[0,x_1, x_2,..,x_k]}=0,$$
$$e_{[0,x_1, x_2,...,x_k,1,x_{k+2},...,x_n]}^2\,\, \,e_{[1,x_1, x_2,..,x_k]}=0,$$
we get
$$ a_x^2 \,a_y=[\frac{\pi_{x_1} } {\pi_{0} P_{0,x_1} } \,\, e_{[0,x_1, x_2,..,x_k, 1,x_{k+2},...,x_n]}^2 + \,\frac{\pi_{x_1}  } {\pi_{1}P_{1,x_1} }\,\,e_{[1,x_1, x_2,...,x_k,1,x_{k+2},...,x_n]}^2] $$
$$   \times  [\frac{ \sqrt{\pi_{x_1}  }} {\sqrt{\pi_{0}}\sqrt{P_{0,x_1} }} \,\, e_{[0,x_1, x_2,..,x_k]} - \,\frac{ \sqrt{\pi_{x_1}  }} {\sqrt{\pi_{1}}\sqrt{P_{1,x_1} }}\,\,e_{[1,x_1, x_2,..,x_k]}]        $$
$$= \,- \, ( \frac{\pi_{x_1}  } {\pi_{0} P_{0,x_1} })^{3/2} [\,\, (\frac{1}{\mu([0 x])} \frac{P_{x_n,1}}{P_{x_n,0}\,})   (\frac{1}{\sqrt{\mu([0 y])}} \sqrt{\frac{P_{x_k,0}}{P_{x_k,1}\,}}\, ) \mathfrak{1}_{[0x0]}   $$
$$+  (\frac{1}{\mu([0 x])} \frac{P_{x_n,0}}{P_{x_n,1} }) ( \frac{1}{\sqrt{\mu([0 y])}} \sqrt{\frac{P_{x_k,0}}{P_{x_k,1}\,}}) \, \mathfrak{1}_{[0x1]}\,\,]
$$
$$+( \frac{\pi_{x_1}  } {\pi_{1} P_{1,x_1} } )^{3/2}  [\, \, (\frac{1}{\mu([1 x])} \frac{P_{x_n,1}}{P_{x_n,0}\,} )  ( \frac{1}{\sqrt{\mu([1 y])}} \sqrt{\frac{P_{x_k,0}}{P_{x_k,1}\,}) }\, \mathfrak{1}_{[1x0]} $$
$$+  (\frac{1}{\mu([1 x])} \frac{P_{x_n,0}}{P_{x_n,1} })\,(\frac{1}{\sqrt{\mu([1 y])}} \sqrt{\frac{P_{x_k,0}}{P_{x_k,1} }}) \mathfrak{1}_{[1x1]}\,] .
$$

Finally, as the matrix $P$ is row  stochastic
$$ \int a_x^2 a_y d \mu=  - \, ( \frac{\pi_{x_1}  } {\pi_{0} P_{0,x_1} })^{3/2} [  (\frac{P_{x_n,1}}{\sqrt{\mu([0 y])}} \sqrt{\frac{P_{x_k,0}}{P_{x_k,1}\,}}\, ) + (\frac{P_{x_n,0}}{\sqrt{\mu([0 y])}} \sqrt{\frac{P_{x_k,0}}{P_{x_k,1}\,}}) \, \,]
$$
 $$+( \frac{\pi_{x_1}  } {\pi_{1} P_{1,x_1} } )^{3/2}  [\,   ( \frac{P_{x_n,1}}{\sqrt{\mu([1 y])}} \sqrt{\frac{P_{x_k,0}}{P_{x_k,1}\,}}) \,  +
 \,(\frac{P_{x_n,0}}{\sqrt{\mu([1 y])}} \sqrt{\frac{P_{x_k,0}}{P_{x_k,1} }}) \,]=
$$
$$\sqrt{P_{x_k,0}}\{  -  ( \frac{\pi_{x_1}  } {\pi_{0} P_{0,x_1} })^{3/2} [
\frac{P_{x_n,1}+ P_{x_n,0}}{\sqrt{\mu([0 y 1])}}   ]+( \frac{\pi_{x_1}  } {\pi_{1} P_{1,x_1} } )^{3/2}  [   \frac{P_{x_n,1}+ P_{x_n,0}}{\sqrt{\mu([1 y 1])}}
  ]  \}=
$$
\begin{equation} \label{tororo99900}\sqrt{P_{x_k,0}}\{  -  ( \frac{\pi_{x_1}  } {\pi_{0} P_{0,x_1} })^{3/2}
\frac{1}{\sqrt{\mu([0 y 1])}}   +( \frac{\pi_{x_1}  } {\pi_{1} P_{1,x_1} } )^{3/2}   \frac{1}{\sqrt{\mu([1 y 1])}}
    \}\end{equation}

 III) Case $n=k$ - We  assume $[x]=[x_1,x_2,...x_n] = [y]$, otherwise $ \int \hat{a}_x^2 \hat{a}_y d \mu=0. $
Then, one can show that
\begin{equation} \label{tororo144}  \int a_x^2 a_y d \mu= \int a_x^3 d \mu=
\end{equation}
$$ ( \frac{\pi_{x_1}  } {\pi_{0} P_{0,x_1} })^{3/2} [  \frac{P_{x_n,1}^{3/2}}{\sqrt{\mu([0 x 0])}} \,  -   \frac{P_{x_n,0}^{3/2}}{\sqrt{\mu([0 x 1])}}  \, \,]\,-( \frac{\pi_{x_1}  } {\pi_{1} P_{1,x_1} } )^{3/2}  [\,    \frac{P_{x_n,1}^{3/2}}{\sqrt{\mu([1 x 0])}}  \,  -
 \,\frac{P_{x_n,0}^{3/2}}{\sqrt{\mu([1 x 1])}}  \,] .$$

Indeed, note first that from \eqref{eq524}, $v=[v_1,v_2,...,v_m]$
$$    e_{[v]}^2 e_{[v]}=[\frac{1}{\mu([v])} \frac{P_{v_m,1}}{P_{v_m,0}\,} \, \mathfrak{1}_{[v_1,...,v_m, 0]} + \frac{1}{\mu([v])} \frac{P_{v_m,0}}{P_{v_m,1} } \, \mathfrak{1}_{[ v_1,...,v_m ,1]}]  $$
$$\times [\frac{1}{\sqrt{\mu([v])}} \sqrt{\frac{P_{v_m,1}}{P_{v_m,0}\,}} \, \mathfrak{1}_{[v_1,...,v_m, 0]} - \frac{1}{\sqrt{\mu([v])}} \sqrt{\frac{P_{v_m,0}}{P_{v_m,1} }} \, \mathfrak{1}_{[v_1,...,v_m , 1]}  ]  $$
$$=(\frac{1}{\mu([v])} \frac{P_{v_m,1}}{P_{v_m,0}\,}) (  \frac{1}{\sqrt{\mu([v])}} \sqrt{\frac{P_{v_m,1}}{P_{v_m,0}\,}} )\, \mathfrak{1}_{[v_1,...,v_m,0]} $$
\begin{align}
  -   ( \frac{1}{\mu([v])} \frac{P_{v_m,0}}{P_{v_m,1} }) \,(\frac{1}{\sqrt{\mu([v])}} \sqrt{\frac{P_{v_m,0}}{P_{v_m,1} }} )\, \mathfrak{1}_{[v_1,...,v_m, 1]}.
    \label{eq5246799}
    \end{align}

Then, from (\ref{luc}), (\ref{lucato}), (\ref{eq52467})
$$ a_x^2 a_x=[\frac{\pi_{x_1} } {\pi_{0} P_{0,x_1} } \,\, e_{[0,x_1, x_2,...,x_n]}^2 + \,\frac{\pi_{x_1}  } {\pi_{1}P_{1,x_1} }\,\,e_{[1,x_1, x_2,...,x_n]}^2] $$
$$   \times  [\frac{ \sqrt{\pi_{x_1}  }} {\sqrt{\pi_{0}}\sqrt{P_{0,x_1} }} \,\, e_{[0,x_1, x_2,..,x_n]} - \,\frac{ \sqrt{\pi_{x_1}  }} {\sqrt{\pi_{1}}\sqrt{P_{1,x_1} }}\,\,e_{[1,x_1, x_2,..,x_n]}]        $$
$$=  \, ( \frac{\pi_{x_1}  } {\pi_{0} P_{0,x_1} })^{3/2} [ (\frac{1}{\mu([0 x])} \frac{P_{x_n,1}}{P_{x_n,0}\,})   (\frac{1}{\sqrt{\mu([0 x])}} \sqrt{\frac{P_{x_n,1}}{P_{x_n,0}\,}}\, ) \mathfrak{1}_{[0x0]}   $$
$$-  (\frac{1}{\mu([0 x])} \frac{P_{x_n,0}}{P_{x_n,1} }) ( \frac{1}{\sqrt{\mu([0 x])}} \sqrt{\frac{P_{x_n,0}}{P_{x_n,1}\,}}) \, \mathfrak{1}_{[0x1]}\,]
$$
$$-( \frac{\pi_{x_1}  } {\pi_{1} P_{1,x_1} } )^{3/2}  [\,  (\frac{1}{\mu([1 x])} \frac{P_{x_n,1}}{P_{x_n,0}\,} )  ( \frac{1}{\sqrt{\mu([1 x])}} \sqrt{\frac{P_{x_n,1}}{P_{x_n,0}\,}) }\, \mathfrak{1}_{[1x0]} $$
$$ -  (\frac{1}{\mu([1 x])} \frac{P_{x_n,0}}{P_{x_n,1} })\,(\frac{1}{\sqrt{\mu([1 x])}} \sqrt{\frac{P_{x_n,0}}{P_{x_n,1} }}) \mathfrak{1}_{[1x1]}\,] .
$$

Therefore,
$$ \int a_x^2 a_x d \mu=   \, ( \frac{\pi_{x_1}  } {\pi_{0} P_{0,x_1} })^{3/2} [  (\frac{P_{x_n,1}}{\sqrt{\mu([0 x])}} \sqrt{\frac{P_{x_n,1}}{P_{x_n,0}\,}}\, ) -   (\frac{P_{x_n,0}}{\sqrt{\mu([0 x])}} \sqrt{\frac{P_{x_n,0}}{P_{x_n,1}\,}}) \, \,]
$$
 $$-( \frac{\pi_{x_1}  } {\pi_{1} P_{1,x_1} } )^{3/2}  [\,   ( \frac{P_{x_n,1}}{\sqrt{\mu([1 x])}} \sqrt{\frac{P_{x_n,1}}{P_{x_n,0}\,}}) \,  -
 \,(\frac{P_{x_n,0}}{\sqrt{\mu([1 x)}} \sqrt{\frac{P_{x_n,0}}{P_{x_n,1} }}) \,]=
$$
$$  \, ( \frac{\pi_{x_1}  } {\pi_{0} P_{0,x_1} })^{3/2} [  \frac{P_{x_n,1}^{3/2}}{\sqrt{\mu([0 x 0])}} \,  -   \frac{P_{x_n,0}^{3/2}}{\sqrt{\mu([0 x 1])}}  \, \,]\,-( \frac{\pi_{x_1}  } {\pi_{1} P_{1,x_1} } )^{3/2}  [\,   ( \frac{P_{x_n,1}^{3/2}}{\sqrt{\mu([1 x 0])}} ) \,  -
 \,(\frac{P_{x_n,0}^{3/2}}{\sqrt{\mu([1 x 1])}}  \,] =$$

 \begin{equation} \label{tororo9990700}   \, ( \frac{\pi_{x_1}  } {\pi_{0} P_{0,x_1} })^{3/2} [  \frac{P_{x_n,1}^{3/2}}{\sqrt{\mu([0 x 0])}} \,  -   \frac{P_{x_n,0}^{3/2}}{\sqrt{\mu([0 x 1])}}  \, \,]\,-( \frac{\pi_{x_1}  } {\pi_{1} P_{1,x_1} } )^{3/2}  [\,    \frac{P_{x_n,1}^{3/2}}{\sqrt{\mu([1 x 0])}}  \,  -
 \,\frac{P_{x_n,0}^{3/2}}{\sqrt{\mu([1 x 1])}}  \,] .
\end{equation}

\medskip

The above reasoning shows III).

\medskip

Given the word
 $[x]=[x_1,x_2,...x_k,x_{k+1},..., x_n] $ we get $n$ words $y$, such that,  the cylinder $[x]\subset [y]=[x_1,x_2,...,x_k]$, where $n\geq k$.

Given $x$ and $z$, with length larger than $y$, then   $\int \hat{a}_{x}^2 \, \hat{a}_y d \mu \, \int \hat{a}_{z}^2 \, \hat{a}_y d \mu$  will be nonzero only for the subprefixes
$y$ which are common to both  $x$ and $z$ (see Proposition \ref{louc}). If there are no common subprefixes for  $x$ and $z$, then   the contribution
$\int \hat{a}_{x}^2 \, \hat{a}_y d \mu \, \int \hat{a}_{z}^2 \, \hat{a}_y d \mu$, for words $y$ of length strictly smaller than the length    of   $x$ and $z$,  in the  sum \eqref{tororo182} is null.

\subsection{The values of $\langle  \hat{a}_x^2, \hat{a}_{[\emptyset]}^0\rangle $ and $\langle  \hat{a}_x^2, \hat{a}_{[\emptyset]}^1\rangle $ when $x$ is a finite word}\label{saco1}

Denote $[x]=[x_1,x_2,..., x_n]$.  We assume that $n\geq 2.$

 In fact, we will compute  $\langle  a_x^2, V_1\rangle $ and   $\langle  a_x^2, V_2\rangle $.
 In order to compute $\langle  \hat{a}_x^2, \hat{a}_{[\emptyset]}^0\rangle $ and $\langle  \hat{a}_x^2, \hat{a}_{[\emptyset]}^1\rangle $ it will be necessary to normalize.

I) Case  $\langle  a_x^2, V_1\rangle $

We will consider first the case $x_1=0$.

Denote $y=(y_1,y_2,..,y_k)$.
If we assume $y_1=0,y_2=0$, then, from   (\ref{eq524})
$$    e_{[y]}^2 \, V_1=[\frac{1}{\mu([y])} \frac{P_{y_k,1}}{P_{y_k,0}\,} \, \mathfrak{1}_{[y_1,y_2,...,y_k, 0]} + \frac{1}{\mu([y])} \frac{P_{y_k,0}}{P_{y_k,1} } \, \mathfrak{1}_{[ y_1,y_2,...,y_k ,1]}]  $$
$$\times [ \pi_1 P_{1,0}\, \, \mathfrak{1}_{[0,0]} - \pi_0 P_{0,0}\, \, \mathfrak{1}_{[1,0]} ]=
  $$
\begin{align}\frac{1}{\mu([y])} \frac{P_{y_k,1}}{P_{y_k,0}\,} \, \pi_1\, P_{1,0}\,\, \mathfrak{1}_{[y_1, y_2,...,y_k,0]}
  +    \frac{1}{\mu([y])} \frac{P_{y_k,0}}{P_{y_k,1} }\,  \pi_1\, P_{1,0}\, \,\, \mathfrak{1}_{[y_1,y_2,...,y_k, 1]}.
    \label{eq5246700}
    \end{align}

If we assume $y_1=1,y_2=0$, then, from   (\ref{eq524})
$$    e_{[y]}^2 \, V_1=[\frac{1}{\mu([y])} \frac{P_{y_k,1}}{P_{y_k,0}\,} \, \mathfrak{1}_{[y_1,y_2,...,y_k, 0]} + \frac{1}{\mu([y])} \frac{P_{y_k,0}}{P_{y_k,1} } \, \mathfrak{1}_{[ y_1,y_2,...,y_k ,1]}]  $$
$$\times [ \pi_1 P_{1,0}\, \, \mathfrak{1}_{[0,0]} - \pi_0 P_{0,0}\, \, \mathfrak{1}_{[1,0]} ]=
  $$
\begin{align}- \,[\frac{1}{\mu([y])} \frac{P_{y_k,1}}{P_{y_k,0}\,} \, \pi_0\, P_{0,0}\,\, \mathfrak{1}_{[y_1, y_2,...,y_k,0]}
  +    \frac{1}{\mu([y])} \frac{P_{y_k,0}}{P_{y_k,1} }\,  \pi_0\, P_{0,0}\, \,\, \mathfrak{1}_{[y_1,y_2,...,y_k, 1]}].
    \label{eq5246710}
    \end{align}

\smallskip

As we assume that $x_1=0$,
then, from (\ref{luc}), (\ref{lucato}), (\ref{eq52467})
we get
$$ a_x^2 V_1=[\frac{\pi_{x_1} } {\pi_{0} P_{0,x_1} } \,\, e_{[0,x_1, x_2,..,x_k, 0,x_{k+2},...,x_n]}^2 + \,\frac{\pi_{x_1}  } {\pi_{1}P_{1,x_1} }\,\,e_{[1,x_1, x_2,...,x_k,0,x_{k+2},...,x_n]}^2] $$
$$\times [ \pi_1 P_{1,0}\, \, \mathfrak{1}_{[0,0]} - \pi_0 P_{0,0}\, \, \mathfrak{1}_{[1,0]} ]=
  $$
$$=  \, \frac{\pi_{x_1}  } {\pi_{0} P_{0,x_1} } [ \frac{1}{\mu([0x])} \frac{P_{x_n,1}}{P_{x_n,0}\,} \, \pi_1\, P_{1,0}\,\, \mathfrak{1}_{[0,x_1, x_2,...,x_n,0]}
  +    \frac{1}{\mu([0x])} \frac{P_{x_n,0}}{P_{x_n,1} }\,  \pi_1\, P_{1,0}\, \,\, \mathfrak{1}_{[0,x_1,x_2,...,x_n, 1]}]+  $$
$$  \, \frac{\pi_{x_1}  } {\pi_{1} P_{1,x_1} } [ \frac{1}{\mu([1x])} \frac{P_{x_n,1}}{P_{x_n,0}\,} \, \pi_0\, P_{0,0}\,\, \mathfrak{1}_{[1,x_1, x_2,...,x_n,0]}
  +    \frac{1}{\mu([1x])} \frac{P_{x_n,0}}{P_{x_n,1} }\,  \pi_0\, P_{0,0}\, \,\, \mathfrak{1}_{[1,x_1,x_2,...,x_n, 1]}]\,.$$

Therefore,
$$\int a_x^2 V_1 d\mu=$$
$$ \, \frac{\pi_{x_1}  } {\pi_{0} P_{0,x_1} } [ \frac{1}{\mu([0x])} \frac{P_{x_n,1}}{P_{x_n,0}\,} \, \pi_1\, P_{1,0}\,\, \mu[0,x,0]\,
  +    \frac{1}{\mu([0x])} \frac{P_{x_n,0}}{P_{x_n,1} }\,  \pi_1\, P_{1,0}\, \,\, \mu[0,x, 1]\,]\,+ $$
$$  \, \frac{\pi_{x_1}  } {\pi_{1} P_{1,x_1} } [ \frac{1}{\mu([1x])} \frac{P_{x_n,1}}{P_{x_n,0}\,} \, \pi_0\, P_{0,0}\,\, \mu[1,x,0]
  +    \frac{1}{\mu([1x])} \frac{P_{x_n,0}}{P_{x_n,1} }\,  \pi_0\, P_{0,0}\, \,\, \mu[1,x, 1]]\,=$$
$$ \, \frac{\pi_{x_1}  } {\pi_{0} P_{0,x_1} } [  P_{x_n,1} \, \pi_1\, P_{1,0}\,\, \,
  +    P_{x_n,0} \, \pi_1\, P_{1,0}\,]\,+    \, \frac{\pi_{x_1}  } {\pi_{1} P_{1,x_1} } [ P_{x_n,1} \, \pi_0\, P_{0,0}\,\,
  +     P_{x_n,0}\,  \pi_0\, P_{0,0}\, \,]\,=$$
$$ \, \frac{\pi_{x_1}  } {\pi_{0} P_{0,x_1} }  \pi_1\, P_{1,0}\,\, \,
\,+    \, \frac{\pi_{x_1}  } {\pi_{1} P_{1,x_1} }  \pi_0\, P_{0,0}.$$

As  we assumed that $x_1=0$ we get
\begin{equation} \label{rret11} \int \hat{a}_x^2\hat{a}_{[\emptyset]}^0 d\mu=\, \frac{1}{|a_x|^2\, | V_1|  }( \frac{ \pi_1\, P_{1,0} } {P_{0,0} }  \,\, \,
\,+   \, \frac{\pi_0^2\, P_{0,0}  } {\pi_1 P_{1,0} })  \,\, \, .
\end{equation}

\medskip

II) $\langle  a_x^2, V_2\rangle $

Now we will compute $ \int a_x^2 V_2 d\mu.$

%We assume that $x_1=0$.

Denote $y=(y_1,y_2,..,y_k)$.
If we assume $y_1=0,y_2=0$, then, from   (\ref{eq524})
$$    e_{[y]}^2 \, V_2=[\frac{1}{\mu([y])} \frac{P_{y_k,1}}{P_{y_k,0}\,} \, \mathfrak{1}_{[y_1,y_2,...,y_k, 0]} + \frac{1}{\mu([y])} \frac{P_{y_k,0}}{P_{y_k,1} } \, \mathfrak{1}_{[ y_1,y_2,...,y_k ,1]}]  $$
$$\times [ \pi_0 P_{0,1}\, \, \mathfrak{1}_{[1,1]} - \pi_1 P_{1,1}\, \, \mathfrak{1}_{[0,1]} ]=0.
  $$

If we assume $y_1=1,y_2=0$, then, from   (\ref{eq524})
$$    e_{[y]}^2 \, V_2=[\frac{1}{\mu([y])} \frac{P_{y_k,1}}{P_{y_k,0}\,} \, \mathfrak{1}_{[y_1,y_2,...,y_k, 0]} + \frac{1}{\mu([y])} \frac{P_{y_k,0}}{P_{y_k,1} } \, \mathfrak{1}_{[ y_1,y_2,...,y_k ,1]}]  $$
$$\times [ \pi_0 P_{0,1}\, \, \mathfrak{1}_{[1,1]} - \pi_1 P_{1,1}\, \, \mathfrak{1}_{[0,1]} ]=0.
  $$

\smallskip

As we assumed that $x_1=0$, then, from (\ref{luc}), (\ref{lucato}), (\ref{eq52467})
we get
$$ a_x^2 V_2=[\frac{\pi_{x_1} } {\pi_{0} P_{0,x_1} } \,\, e_{[0,x_1, x_2,...,x_n]}^2 + \,\frac{\pi_{x_1}  } {\pi_{1}P_{1,x_1} }\,\,e_{[1,x_1, x_2,...,x_n]}^2] $$
$$\times [ \pi_0 P_{0,1}\, \, \mathfrak{1}_{[1,1]} - \pi_1 P_{1,1}\, \, \mathfrak{1}_{[0,1]} ]=0.
  $$
Therefore, if $x_1=0$ we get
\begin{equation} \label{poii}\int \hat{a}_x^2 \hat{a}_{[\emptyset]}^1 \,d\mu=0.
\end{equation}
\smallskip

The case $x_1=1$ is left for the reader.

\medskip

\subsection{The value of $\langle  \hat{a}_z^2, \hat{a}_y\rangle $   when  length of $y$ is strictly  larger  than the  length of $z$} \label{ylong1}

 Now we want to estimate $\langle  \hat{a}_z^2, \hat{a}_y\rangle =\int \hat{a}_z^2 \, \hat{a}_y d \mu$ in the case that the length of $y$ is strictly larger  than the length of $z$. We will show that $\int \hat{a}_z^2 \, \hat{a}_y d \mu=0.$

We assume that $[y]=[x_1,x_2,...x_k,x_{k+1},..., x_n] \subset [z]=[x_1,x_2,...,x_k]$, where $n>k$  (otherwise we get  that$\int \hat{a}_z^2 \, \hat{a}_y d \mu$ is zero from Proposition  \ref{louc}).

In fact, we will show that $\int a_z^2 \, a_y d \mu=0.$

I)
If we assume $x_{k+1}=0$ in the word $[y]$, then, from (\ref{eq524})
$$    e_z^2 e_y =[\frac{1}{\mu([z])} \frac{P_{x_k,1}}{P_{x_k,0}\,} \, \mathfrak{1}_{[x_1,...,x_k, 0]} + \frac{1}{\mu([z])} \frac{P_{x_k,0}}{P_{x_k,1} } \, \mathfrak{1}_{[ x_1,...,x_k ,1]}]  $$
$$\times [\frac{1}{\sqrt{\mu([y])}} \sqrt{\frac{P_{x_n,1}}{P_{x_n,0}\,}} \, \mathfrak{1}_{[x_1,...,x_k,0,x_{k+2},...,x_n, 0]} - \frac{1}{\sqrt{\mu([y])}} \sqrt{\frac{P_{x_n,0}}{P_{x_n,1} }} \, \mathfrak{1}_{[x_1,...,x_k,0,x_{k+2},...,x_n,  1]}  ]  $$
$$= (\frac{1}{\mu([z])} \frac{P_{x_k,1}}{P_{x_k,0}\,}) (  \frac{1}{\sqrt{\mu([y])}} \sqrt{\frac{P_{x_n,1}}{P_{x_n,0}\,}} )\,
\mathfrak{1}_{[x_1,...,x_k,0, x_{k+2},...,x_n,0]} $$
\begin{align}\label{eq52467808} -
(\frac{1}{\mu([z])} \frac{P_{x_k,1}}{P_{x_k,0}\,}) (  \frac{1}{\sqrt{\mu([y])}} \sqrt{\frac{P_{x_n,0}}{P_{x_n,1}\,}} )\, \mathfrak{1}_{[x_1,...,x_k,0, x_{k+2},...,x_n,1]}.
\end{align}

Note that above, from the second to the third line, we use the fact that
$$  \mathfrak{1}_{[ x_1,...,x_k ,1]}  \mathfrak{1}_{[x_1,...,x_k,0,x_{k+2},...,x_n, 0]} =0$$ and
$$  \mathfrak{1}_{[ x_1,...,x_k ,1]}  \mathfrak{1}_{[x_1,...,x_k,0,x_{k+2},...,x_n,  1]} =0.$$

Then, from (\ref{luc}), \eqref{eq52467808}  and (\ref{lucato})

$$  a_z^2 a_y=  [\frac{\pi_{x_1}  } {\pi_{0} P_{0,x_1} } \,\, e_{[0,x_1, x_2,..,x_k]}^2 + \,\frac{ \pi_{x_1}  } {\pi_{1}P_{1,x_1} }\,\,e_{[1,x_1, x_2,..,x_k]}^2]         $$
$$\times    [\frac{\sqrt{\pi_{x_1}} } {\sqrt{\pi_{0} P_{0,x_1}} } \,\, e_{[0,x_1, x_2,..,x_k, 0,x_{k+2},...,x_n]} - \,\frac{\sqrt{\pi_{x_1} } } {\sqrt{\pi_{1}P_{1,x_1} }}\,\,e_{[1,x_1, x_2,...,x_k,0,x_{k+2},...,x_n]}] $$
$$  =\, ( \frac{\pi_{x_1}  } {\pi_{0} P_{0,x_1} })^{3/2} [ (\frac{1}{\mu([0 z])} \frac{P_{x_k,1}}{P_{x_k,0}\,})   (\frac{1}{\sqrt{\mu([0 y])}} \sqrt{\frac{P_{x_n,1}}{P_{x_n,0}\,}}\, ) \mathfrak{1}_{[0y0]}   $$
$$- (\frac{1}{\mu([0 z])} \frac{P_{x_k,1}}{P_{x_k,0} }) ( \frac{1}{\sqrt{\mu([0 y])}} \sqrt{\frac{P_{x_n,0}}{P_{x_n,1}\,}}) \, \mathfrak{1}_{[0y1]}\,]
$$
$$-( \frac{\pi_{x_1}  } {\pi_{1} P_{1,x_1} } )^{3/2}  [\,  (\frac{1}{\mu([1 z])} \frac{P_{x_k,1}}{P_{x_k,0}\,} )  ( \frac{1}{\sqrt{\mu([1 y])}} \sqrt{\frac{P_{x_n,1}}{P_{x_n,0}\,}) }\, \mathfrak{1}_{[1y0]} $$
$$- (\frac{1}{\mu([1 z])} \frac{P_{x_k,1}}{P_{x_k,0} })\,(\frac{1}{\sqrt{\mu([1 y])}} \sqrt{\frac{P_{x_n,0}}{P_{x_n,1} }}) \mathfrak{1}_{[1y1]}\,] .
$$

Finally,
$$ \int a_z^2 a_y d \mu $$
$$ = \, ( \frac{\pi_{x_1}  } {\pi_{0} P_{0,x_1} })^{3/2}    [ \frac{P_{x_k,1}}{\mu([0 z 0])}
\sqrt{\mu([0 y 0])} \sqrt{P_{x_n,1}} -  \frac{P_{x_k,1}}{\mu([0 z 0]}   \sqrt{\mu([0 y 1])} \sqrt{P_{x_n,0}\,}\, \,]
$$
$$+ ( \frac{\pi_{x_1}  } {\pi_{1} P_{1,x_1} } )^{3/2}  \,
[- \frac{P_{x_k,1}}{\mu([1 z 0])} \sqrt{\mu([1 y 0])} \sqrt{P_{x_n,1} }\, +\,\frac{P_{x_k,1}}{\mu([1 z 0])} \sqrt{\mu([1 y 1])} \sqrt{P_{x_n,0}}\, \,]
$$
$$=P_{x_k,1}\sqrt{\mu([y])} \,\{  \, ( \frac{\pi_{x_1}  } {\pi_{0} P_{0,x_1} })^{3/2}    [ \frac{\sqrt{P_{x_n,1}        P_{0,x_1} P_{x_n,0}} }{\mu([0 z 0])}
 -  \frac{\sqrt{P_{x_n,0}  P_{0,x_1} P_{x_n,1}\,}}{\mu([0 z 0]}    \, \,]
$$
 \begin{equation} \label{tororo109} + P_{x_k,1} ( \frac{\pi_{x_1}  } {\pi_{1} P_{1,x_1} } )^{3/2}  \,
[- \frac{\sqrt{P_{x_n,1} P_{1,x_1} P_{x_n,0} }}{\mu([1 z 0])}  \, +\,\frac{ \sqrt{P_{x_n,0}        P_{1,x_1} P_{x_n,1}}}{\mu([1 z 0])} \, \,] \,\} =0.
\end{equation}

II)
If we assume $x_{k+1}=1$ in the word $[y]$, then, from (\ref{eq524})
$$    e_z^2 e_y =[\frac{1}{\mu([z])} \frac{P_{x_k,1}}{P_{x_k,0}\,} \, \mathfrak{1}_{[x_1,...,x_k, 0]} + \frac{1}{\mu([z])} \frac{P_{x_k,0}}{P_{x_k,1} } \, \mathfrak{1}_{[ x_1,...,x_k ,1]}] $$
$$ \times [\frac{1}{\sqrt{\mu([y])}} \sqrt{\frac{P_{x_n,1}}{P_{x_n,0}\,}} \, \mathfrak{1}_{[x_1,...,x_k,1,x_{k+2},...,x_n, 0]} - \frac{1}{\sqrt{\mu([y])}} \sqrt{\frac{P_{x_n,0}}{P_{x_n,1} }} \, \mathfrak{1}_{[x_1,...,x_k,1,x_{k+2},...,x_n,  1]}  ]  $$
$$=(\frac{1}{\mu([z])} \frac{P_{x_k,0}}{P_{x_k,1}\,}) (  \frac{1}{\sqrt{\mu([y])}} \sqrt{\frac{P_{x_n,1}}{P_{x_n,0}\,}} )\,
\mathfrak{1}_{[x_1,...,x_k,1, x_{k+2},...,x_n,0]} $$
\begin{align}\label{eq524678} -
(\frac{1}{\mu([z])} \frac{P_{x_k,0}}{P_{x_k,1}\,}) (  \frac{1}{\sqrt{\mu([y])}} \sqrt{\frac{P_{x_n,0}}{P_{x_n,1}\,}} )\, \mathfrak{1}_{[x_1,...,x_k,1, x_{k+2},...,x_n,1]}
\end{align}

Then, from (\ref{luc}), (\ref{eq524678}) and (\ref{lucato})

$$  a_z^2 a_y=  [\frac{\pi_{x_1}  } {\pi_{0} P_{0,x_1} } \,\, e_{[0,x_1, x_2,..,x_k]}^2 + \,\frac{ \pi_{x_1}  } {\pi_{1}P_{1,x_1} }\,\,e_{[1,x_1, x_2,..,x_k]}^2]         $$
$$\times    [\frac{\sqrt{\pi_{x_1}} } {\sqrt{\pi_{0} P_{0,x_1}} } \,\, e_{[0,x_1, x_2,..,x_k, 0,x_{k+2},...,x_n]} - \,\frac{\sqrt{\pi_{x_1} } } {\sqrt{\pi_{1}P_{1,x_1} }}\,\,e_{[1,x_1, x_2,...,x_k,0,x_{k+2},...,x_n]}] $$
$$=  \, ( \frac{\pi_{x_1}  } {\pi_{0} P_{0,x_1} })^{3/2} [ (\frac{1}{\mu([0 z])} \frac{P_{x_k,0}}{P_{x_k,1}\,})   (\frac{1}{\sqrt{\mu([0 y])}} \sqrt{\frac{P_{x_n,1}}{P_{x_n,0}\,}}\, ) \mathfrak{1}_{[0y0]}   $$
$$ - (\frac{1}{\mu([0 z])} \frac{P_{x_k,0}}{P_{x_k,1} }) ( \frac{1}{\sqrt{\mu([0 y])}} \sqrt{\frac{P_{x_n,0}}{P_{x_n,1}\,}}) \, \mathfrak{1}_{[0y1]}\,]
$$
$$-( \frac{\pi_{x_1}  } {\pi_{1} P_{1,x_1} } )^{3/2}  [\,  (\frac{1}{\mu([1 z])} \frac{P_{x_k,0}}{P_{x_k,1}\,} )  ( \frac{1}{\sqrt{\mu([1 y])}} \sqrt{\frac{P_{x_n,1}}{P_{x_n,0}\,}) }\, \mathfrak{1}_{[1y0]} $$
$$- (\frac{1}{\mu([1 z])} \frac{P_{x_k,0}}{P_{x_k,1} })\,(\frac{1}{\sqrt{\mu([1 y])}} \sqrt{\frac{P_{x_n,0}}{P_{x_n,1} }}) \mathfrak{1}_{[1y1]}\,] .
$$

Finally,
$$ \int a_z^2 a_y d \mu$$
$$ =  \, ( \frac{\pi_{x_1}  } {\pi_{0} P_{0,x_1} })^{3/2}    [ \frac{P_{x_k,0}}{\mu([0 z 1])}
\sqrt{\mu([0 y 0])} \sqrt{P_{x_n,1}} -  \frac{P_{x_k,0}}{\mu([0 z 1]}   \sqrt{\mu([0 y 1])} \sqrt{P_{x_n,0}\,}\, \,]
$$
$$+( \frac{\pi_{x_1}  } {\pi_{1} P_{1,x_1} } )^{3/2}  \,
[- \frac{P_{x_k,1}}{\mu([1 z 0])} \sqrt{\mu([1 y 0])} \sqrt{P_{x_n,1} }\, +\,\frac{P_{x_k,1}}{\mu([1 z 0])} \sqrt{\mu([1 y 1])} \sqrt{P_{x_n,0}}\, \,]
$$
$$=P_{x_k,0}\sqrt{\mu([y])} \,\{  \, ( \frac{\pi_{x_1}  } {\pi_{0} P_{0,x_1} })^{3/2}    [ \frac{\sqrt{P_{x_n,1}        P_{0,x_1} P_{x_n,0}} }{\mu([0 z 1])}
 -  \frac{\sqrt{P_{x_n,0}  P_{0,x_1} P_{x_n,1}}  \,}{\mu([0 z 1]}  \, \,]
$$
 \begin{equation} \label{tororo109}+ P_{x_k,1}( \frac{\pi_{x_1}  } {\pi_{1} P_{1,x_1} } )^{3/2}  \,
[- \frac{\sqrt{P_{x_n,1} P_{1,x_1} P_{x_n,0} }}{\mu([1 z 1])}  \, +\,\frac{\sqrt{P_{x_n,0}        P_{1,x_1} P_{x_n,1}}  }{\mu([1 z 1])}\, \,] \,\}=0 .
\end{equation}

\smallskip

\section{Computations for the integral $\int X Y Z $ } \label{xyz}
 Our purpose on this section is: given $x$ and $z$ we want to compute for all $y$
 \begin{equation} \label{gaga} \sum_{\text{word} \, y\,}(   \int \hat{a}_x \,\hat{a}_z\, \hat{a}_y \,d \mu)^2,
 \end{equation}
which corresponds to the first term in the sum given by expression (\ref{tororo172}).

Remember that from Corollary  \ref{pop24} if $x$ is not a subprefix of $z$ and  $z$ is not a subprefix of $x$, we get that for any $y$
$$\int \hat{a}_x \,\hat{a}_z \, \hat{a}_y d \mu=0.$$

Without loss of generality, we  assume that $z$ is a subprefix of $x$ (see Proposition \ref{louc1}). The only possible nonzero value for \eqref{gaga}  is $\int \hat{a}_x^2 \, \hat{a}_z \,d \mu.$ This justify the first term in the sum \eqref{tororo182}.

We assume first that:

$[y]=[x_1,x_2,...x_k,x_{k+1},..., x_n,x_{n+1},...,x_j]   \subset [x]=[x_1,x_2,...x_k,x_{k+1},..., x_n] \subset [z]=[x_1,x_2,...,x_k]$,
where $j>n\geq k$.
\smallskip

 We will show in all cases  that $ \int a_x \,a_y\, a_z \,d \mu=0.$  This includes the case \begin{equation} \label{KLGT} \int a_z^2 \, a_x \,d \mu=0.
\end{equation}
\smallskip

I) First we assume that $x_{k+1} =0= x_{n+1}.$

Then,
$$ a_x \,a_y\, a_z =$$
$$    [\frac{ \sqrt{\pi_{x_1}  }} {\sqrt{\pi_{0}}\sqrt{P_{0,x_1} }} \,\, e_{[0,x_1, x_2,..,x_k]} - \,\frac{ \sqrt{\pi_{x_1}  }} {\sqrt{\pi_{1}}\sqrt{P_{1,x_1} }}\,\,e_{[1,x_1, x_2,..,x_k]}]        $$
$$   \times   [\frac{ \sqrt{\pi_{x_1}  }} {\sqrt{\pi_{0}}\sqrt{P_{0,x_1} }} \,\, e_{[0,x_1, x_2,..,x_j]} - \,\frac{ \sqrt{\pi_{x_1}  }} {\sqrt{\pi_{1}}\sqrt{P_{1,x_1} }}\,\,e_{[1,x_1, x_2,..,x_j]}]         $$
$$   \times  [\frac{ \sqrt{\pi_{x_1}  }} {\sqrt{\pi_{0}}\sqrt{P_{0,x_1} }} \,\, e_{[0,x_1, x_2,..,x_n]} - \,\frac{ \sqrt{\pi_{x_1}  }} {\sqrt{\pi_{1}}\sqrt{P_{1,x_1} }}\,\,e_{[1,x_1, x_2,..,x_n]}]          $$

\medskip

$$  =\, ( \frac{\pi_{x_1}  } {\pi_{0} P_{0,x_1}  \,})^{3/2} \,   \frac{1}{\sqrt{\mu([0x])\mu([0z])\mu([0y])}} \,[ \sqrt{\frac{P_{x_n,1}P_{x_k,1} P_{x_j,1}}{P_{x_n,0}P_{x_k,0}P_{x_j,0}}} \, \mathfrak{1}_{[0y0]}  $$
$$-  \sqrt{\frac{P_{x_n,1}P_{x_k,1}P_{x_j,0}}{P_{x_n,0}P_{x_k,0} P_{x_j,1}}} \, \mathfrak{1}_{[0y 1]}\,] $$
$$ - \, ( \frac{\pi_{x_1}  } {\pi_{1} P_{1,x_1} } \,)^{3/2} \, \frac{1}{\sqrt{\mu([1x])\mu([1z])\mu([1y])}} \,[ \sqrt{\frac{P_{x_n,1}P_{x_k,1} P_{x_j,1}}{P_{x_n,0}P_{x_k,0}P_{x_j,0}}} \, \mathfrak{1}_{[1y0]}   $$
$$- \sqrt{\frac{P_{x_n,1}P_{x_k,1}P_{x_j,0}}{P_{x_n,0}P_{x_k,0}P_{x_j,1} }} \, \mathfrak{1}_{[1y 1]}\,] .$$

Note that for all $j$
\begin{equation} \label{ooop} \sqrt{\frac{P_{x_j,1}}{P_{x_j,0}  }} \, \mu([0y0])=  \sqrt{P_{x_j,1 }\,P_{x_j,0}  } \, \mu([0y])= \sqrt{\frac{P_{x_j,0}}{P_{x_j,1}  }} \, \mu([0y1]).
\end{equation}
and
\begin{equation} \label{ooop1}\sqrt{\frac{ P_{x_j,1}}{P_{x_j,0}}  }  \mu([1y0]) =  \sqrt{P_{x_j,1 }\,P_{x_j,0}  } \, \mu([1y]) = \sqrt{\frac{ P_{x_j,0}}{P_{x_j,1}}  }  \mu([1y1])
\end{equation}

Finally, from \eqref{ooop}  and \eqref{ooop1}
$$ \int a_x \,a_y\, a_z \,d \mu=$$
$$  \, ( \frac{\pi_{x_1}  } {\pi_{0} P_{0,x_1}  \,})^{3/2} \,   \frac{1}{\sqrt{\mu([0x])\mu([0z])\mu([0y])}} \,[ \sqrt{\frac{P_{x_n,1}P_{x_k,1} P_{x_j,1}}{P_{x_n,0}P_{x_k,0}P_{x_j,0}}} \, \mu([0y0])  $$
$$- \sqrt{\frac{P_{x_n,1}P_{x_k,1}P_{x_j,0}}{P_{x_n,0}P_{x_k,0} P_{x_j,1}}} \,\mu([0y1])\,] $$
$$-  \, ( \frac{\pi_{x_1}  } {\pi_{1} P_{1,x_1} } \,)^{3/2} \, \frac{1}{\sqrt{\mu([1x])\mu([1z])\mu([1y])}} \,[ \sqrt{\frac{P_{x_n,1}P_{x_k,1} P_{x_j,1}}{P_{x_n,0}P_{x_k,0}P_{x_j,0}}} \, \mu([1y0])  $$
$$ -\sqrt{\frac{P_{x_n,1}P_{x_k,1}P_{x_j,0}}{P_{x_n,0}P_{x_k,0}P_{x_j,1} }} \, \mu([1y1])\,] =$$
$$  \, ( \frac{\pi_{x_1}  } {\pi_{0} P_{0,x_1}  \,})^{3/2} \,   \frac{1}{\sqrt{\mu([0x])\mu([0z])\mu([0y])}} \,[ \sqrt{\frac{P_{x_n,1}P_{x_k,1} }{P_{x_n,0}P_{x_k,0}}} \,\sqrt{\frac{ P_{x_j,1}}{P_{x_j,0}}  } \mu([0y0])  $$
$$- \sqrt{\frac{P_{x_n,1}P_{x_k,1}}{P_{x_n,0}P_{x_k,0} }} \,\sqrt{\frac{ P_{x_j,0}}{P_{x_j,1}}  } \,\mu([0y1])\,] $$
$$-  \, ( \frac{\pi_{x_1}  } {\pi_{1} P_{1,x_1} } \,)^{3/2} \, \frac{1}{\sqrt{\mu([1x])\mu([1z])\mu([1y])}} \,[ \sqrt{\frac{P_{x_n,1}P_{x_k,1} }{P_{x_n,0}P_{x_k,0}}} \,\,\sqrt{\frac{ P_{x_j,1}}{P_{x_j,0}}  }  \mu([1y0])  $$
$$ -\sqrt{\frac{P_{x_n,1}P_{x_k,1}}{P_{x_n,0}P_{x_k,0}}} \,\,\sqrt{\frac{ P_{x_j,0}}{P_{x_j,1}}  }  \mu([1y1])\,] = 0-0=0 .$$
\smallskip

II) Now we assume that $x_{k+1} =1= x_{n+1}.$ In a similar way as before

$$ \int a_x \,a_y\, a_z \,d \mu$$
$$ =\, ( \frac{\pi_{x_1}  } {\pi_{0} P_{0,x_1}  \,})^{3/2} \,   \frac{1}{\sqrt{\mu([0x])\mu([0z])\mu([0y])}} \,[ \sqrt{\frac{P_{x_n,0}P_{x_k,0} P_{x_j,1}}{P_{x_n,1}P_{x_k,1}P_{x_j,0}}} \, \mu([0y0]) $$
$$ -\sqrt{\frac{P_{x_n,0}P_{x_k,0}P_{x_j,0}}{P_{x_n,1}P_{x_k,1} P_{x_j,1}}} \,\mu([0y1])\,] $$
$$ - \, ( \frac{\pi_{x_1}  } {\pi_{1} P_{1,x_1} } \,)^{3/2} \, \frac{1}{\sqrt{\mu([1x])\mu([1z])\mu([1y])}} \,[ \sqrt{\frac{P_{x_n,0}P_{x_k,0} P_{x_j,1}}{P_{x_n,1}P_{x_k,1}P_{x_j,0}}} \, \mu([1y0])  $$
$$- \sqrt{\frac{P_{x_n,0}P_{x_k,0}P_{x_j,0}}{P_{x_n,1}P_{x_k,1}P_{x_j,1} }} \, \mu([1y1])\,] $$
$$ = \, ( \frac{\pi_{x_1}  } {\pi_{0} P_{0,x_1}  \,})^{3/2} \,   \frac{\sqrt{\mu([0y0]) \, P_{x_j,1}}}{\sqrt{\mu([0x])\mu([0z])}} \,[ \sqrt{\frac{P_{x_n,0}P_{x_k,0} }{P_{x_n,1}P_{x_k,1}}} \, -  \sqrt{\frac{P_{x_n,0}P_{x_k,0}}{P_{x_n,1}P_{x_k,1} }} \,\,] $$
$$ - \, ( \frac{\pi_{x_1}  } {\pi_{1} P_{1,x_1} } \,)^{3/2} \,        \frac{\sqrt{\mu([1y0]) \, P_{x_j,1}}}{\sqrt{\mu([1 x])\mu([1 z])}} \,[ \sqrt{\frac{P_{x_n,0}P_{x_k,0} }{P_{x_n,1}P_{x_k,1}}} \,  -  \sqrt{\frac{P_{x_n,0}P_{x_k,0}}{P_{x_n,1}P_{x_k,1}}} \, \,]= 0-0=0 .$$
\smallskip

III) Now if we assume that $x_{k+1} =0 $ and $ x_{n+1}=1$ or that $x_{k+1} =1 $ and $ x_{n+1}=0$, we get that in a similar way that
$$ \int \hat{a}_x \,\hat{a}_y\, \hat{a}_z \,d \mu=0.$$

\smallskip

After all these computations,
for fixed $\hat{a}_x$ and $\hat{a}_z$,  we want to compute  $K(\hat{a}_z,\hat{a}_x)$. In this direction  we have to consider (\ref{gaga})  which is the first sum in expression (\ref{tororo172})

We wonder for which $y$ we have that $ (\int \hat{a}_x \,\hat{a}_z\, \hat{a}_y \,d \mu)^2\neq 0.$
We assumed without loss of generality that $z$ is a subprefix of $x$. In this case, the length of $x$ is strictly  larger than the length of $z$.

Considering first the case where the length of $y$ is larger than $z$ and $x$, it follows from the  above that
$$\sum_{\text{word} \, y\,\text{with length larger than}\, x \, \text{and}\,z}(   \int \hat{a}_x \,\hat{a}_z\, \hat{a}_y \,d \mu)^2=0.$$

Now we consider  the case where the length of $y$ is strictly smaller than the length of  $z$ and $x$.

For the case where the length of $y$ is strictly smaller than $z$ and $x$ we need to assume that $y$ is a subprefix of $z$ (otherwise $\hat{a}_y \hat{a}_z=0$ and we get $ (\int \hat{a}_x \,\hat{a}_z\, \hat{a}_y \,d \mu)^2=0$). If $y$ is a strict subprefix of $z$ and $z$ is a strict subprefix of $s$ we get from the above that $ (\int \hat{a}_x \,\hat{a}_z\, \hat{a}_y \,d \mu)^2=0$.

Finally, we assume that  the length of $y$ is strictly smaller than $x$  and strictly larger than $z$. In this case we have to assume that
$x$ is a subprefix of $y$ and $y$ is a subprefix of $z$ (otherwise  by Proposition \ref{louc1} we have   $ \int \hat{a}_x \,\hat{a}_z\, \hat{a}_y \,d \mu^2=0$).
It follows from the above that  also in this case  $ (\int \hat{a}_x \,\hat{a}_z\, \hat{a}_y \,d \mu)^2=0$.

Therefore, in the estimation of expression (\ref{gaga}) it follows from our reasoning that all elements in this sum are zero up to expressions
 $(\int \hat{a}_x^2 \, \hat{a}_z \,d \mu)^2$ and $(\int \hat{a}_z^2 \, \hat{a}_x \,d \mu)^2$, that is, the cases where $y=x$ or $y=z$.  From Proposition \ref{louc} we have to assume that $x$ is a subprefix of $z$ or vice versa. The explicit expressions for these  two cases were analyzed in sections \ref{ylong2} and
 \ref{ylong1}.

 If the length of $x$ is larger than the length of $z$, then, from (\ref{tororo109}) we get
 $(\int \hat{a}_z^2 \, \hat{a}_x \,d \mu)^2 =0.$

 The final conclusion is that
  \begin{equation} \label{gagatr} \sum_{\text{word} \, y\,}(   \int \hat{a}_x \,\hat{a}_z\, \hat{a}_y \,d \mu)^2 = (\int \hat{a}_x^2 \, \hat{a}_z \,d \mu)^2 + (\int \hat{a}_z^2 \, \hat{a}_x \,d \mu)^2= (\int \hat{a}_x^2 \, \hat{a}_z \,d \mu)^2.
 \end{equation}

\end{document}